\documentclass[12pt]{article}
\usepackage{geometry}
\usepackage{amssymb}
\usepackage{tikz}
 \usepackage{xypic}
\usepackage[all,2cell]{xy}
\usepackage{graphicx}
\usepackage{amsmath}
\usepackage{fullpage}
\usepackage{amsthm}
\usepackage{mathtools}
\usepackage{leftidx}
\def\cP{\mathcal P}
\usepackage{xcolor, soul}
\sethlcolor{yellow}
\allowdisplaybreaks
\usepackage{mathrsfs}
\usepackage{calligra}
\usepackage[all,2cell]{xy}
\UseTwocells

\newtheorem{theorem}{Theorem}[section]
\newtheorem{corollary}[theorem]{Corollary}
\newtheorem{lemma}[theorem]{Lemma}
\newtheorem{proposition}[theorem]{Proposition}

\newtheorem{definition}[theorem]{Definition}
\newtheorem{example}[theorem]{Example}
\newtheorem{remark}[theorem]{Remark}

\newtheorem{notation}[theorem]{Notation}

\usepackage{hyperref}
\begin{document}

\title{The Squire complex and its extensions revisited}
\author{Elton Pasku \\
Universiteti i Tiran\"es \\
Fakulteti i Shkencave Natyrore \\
Departamenti i Matematik\"es\\ 
Tiran\"e, Albania \\
elton.pasku@fshn.edu.al}

\date{}

\maketitle

\begin{abstract}
The Squier complex $\mathcal{D}(\cP)$ has been defined in \cite{LDHM1} as a combinatorial 2-complex. Also several extensions of $\mathcal{D}(\cP)$ are defined and used extensively in \cite{LDHM2}, \cite{SES} and \cite{EP}. Since the results obtained in those papers rely on the assumption that $\mathcal{D}(\cP)$ and its extensions in \cite{LDHM2} and \cite{SES} are CW-complexes of dimensions 2 and 3, respectively, and no proofs are given to support these preconditions, we provide the respective proofs in the present paper.
\end{abstract}

\maketitle

\section{Some auxiliary notions and results on cell and CW-complexes}

\begin{definition}
\label{CW-complex}A \textit{CW-complex} is an ordered triple $(X,E,\Phi )$, where $X$ is a Hausdorff space, $E$ is a family of cells in $X$, and $\Phi =\{\Phi _{e}\mid e\in E\}$ is a family of characteristic maps such that

\begin{enumerate}
\item  $X=\bigcup \{e\mid e\in E\}$ (disjoint union);

\item  for each $k$-cell $e\in E$, the map $\Phi_{e}:(D^{k},S^{k-1})\longrightarrow (e\cup X^{(k-1)},X^{(k-1)})$ is a relative homeomorphism;

\item  if $e\in E$, then its closure is contained in a finite union of cells in $E$,

\item  $X$ has the weak topology determined by $\{\overline{e}\mid e\in E\}$: A set $A\subset X$ is open (or closed) if and only if $A\cap
\overline{e}$ is open (or closed) in $\overline{e}$ for every $e\in E$.
\end{enumerate}
\end{definition}

\bigskip

An alternative way to describe the topology on $X$ is to say that a set $A \subseteq X$ is open (or closed) iﬀ $\Phi^{-1}_{e} (A)$ is open (or closed) in $D^{n}$ for each characteristic map $\Phi_{e}$ (see \cite{AH} p. 519). It is proved in Lemma 8.15, p.200 of \cite{Rotman-T} or in p.193 of \cite{Schubert} that, for every $k$-cell $e\in E$, $\Phi_{e}(D^{k})=\overline{e}$. We call $\Phi _{e}(D^{k})$ a \textit{closed} $k$-cell. The restriction of $\Phi _{e}$ on the boundary $S^{k-1}$ of $D^{k}$ is called the \textit{attaching map} of $e$. 

A \textit{subcomplex} of a CW-complex $X$ is a subspace $A\subset X$ which is a union of cells of $X$, such that the closure of each cell in $A$ is contained in $A$. 

\begin{definition}
If $X$ is a CW-complex, hence a disjoint union of cells: $X=\cup\{e\mid e\in E\}$, then define, for each $k\geq 0$, the $k$-\textit{skeleton} $X^{(k)}$ of $X$ by
\begin{equation*}
X^{(k)}=\cup \{e\in E\mid \text{dim}(e)\leq k\}\text{.}
\end{equation*}
We then have, $X^{(k)}\subset X^{(k+1)}$ for every $k\geq 0$, and
$X=\cup _{k\geq 0}X^{(k)}$.
\end{definition}
In practice the construction of CW-complexes is done in an inductive way as follows:

\begin{enumerate}
\item[(1)]  Start with a discrete set $X^{(0)}$, the 0-cells of $X.$

\item[(2)]  Inductively, form the $n$-skeleton $X^{(n)}$ from $X^{(n-1)}$ by attaching $n$-cells $e_{n}^{\alpha }$ via maps $\phi _{\alpha }:S^{n-1}\longrightarrow X^{(n-1)}$. This means that $X^{(n)}$ is the quotient space $X^{(n-1)}\coprod_{\phi_{\alpha }}D_{\alpha }^{n}$.

\item[(3)]  $X=\cup _{n\geq 0}X^{(n)}$.
\end{enumerate}
It is a consequence of 4.5.3, p. 120 of \cite{Brown} that the construction of $X^{(n)}$ from $X^{(n-1)}$ can be represented by the following pushout diagram
\begin{equation*}
\xymatrix@R=30pt @C=60pt{ \sqcup_{\alpha}S^{n-1}_{\alpha} \ar[r]^{j} \ar[d]^{\phi} &  \sqcup_{\alpha}D^{n}_{\alpha} \ar[d]^{\nu j_{2}}\\
X^{(n-1)} \ar[r]^{\nu j_{1}} & X^{(n-1)} \sqcup_{\phi_{\alpha}}D^{n}_{\alpha}}
\end{equation*}
where $j$ is the obvious inclusion map, $\phi$ is the ''collection'' of the attaching maps of the $n$-cells, $j_{1}: X^{(n-1)} \rightarrow X^{(n-1)} \sqcup_{\alpha} D^{n}_{\alpha}$ and $j_{2}: \sqcup_{\alpha} D^{n}_{\alpha}\rightarrow X^{(n-1)} \sqcup_{\alpha} D^{n}_{\alpha}$ are the inclusion maps, $X^{(n-1)} \sqcup_{\phi_{\alpha}} D^{n}_{\alpha}$ is the quotient space of $X^{(n-1)} \sqcup_{\alpha} D^{n}_{\alpha}$ by the equivalence relation generated by all pairs $(u, \phi_{\alpha}(u))$ for all $\alpha$ and $u \in S_{\alpha}^{n-1}$, and $\nu: X^{(n-1)} \sqcup_{\alpha} D^{n}_{\alpha} \rightarrow X^{(n-1)} \sqcup_{\phi_{\alpha}} D^{n}_{\alpha}$ is the quotient map.

\begin{theorem} \label{numcw}(Theorem 7.3.16 of \cite{Maunder})
\label{KxL}If $K$ and $L$ are CW-complexes, so is the topological product $K\times L$ provided that

\begin{enumerate}
\item[(a)]  one of $K$, $L$ is locally compact, or

\item[(b)]  both $K$ and $L$ have a countable number of cells.
\end{enumerate}
\end{theorem}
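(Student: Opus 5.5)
The plan is to take as cells of $K\times L$ the products $e\times f$ with $e$ a $p$-cell of $K$ and $f$ a $q$-cell of $L$. Each such product is a $(p+q)$-cell. For its characteristic map I will use $\Phi_e\times\Phi_f$, composed with a fixed homeomorphism of pairs
\[
(D^{p+q},S^{p+q-1})\cong (D^p\times D^q,\ \partial(D^p\times D^q)),
\qquad \partial(D^p\times D^q)=S^{p-1}\times D^q\cup D^p\times S^{q-1}.
\]
Conditions (1)--(3) of Definition \ref{CW-complex} are then essentially formal.

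\textbf{Conditions (1)--(3).} The space $K\times L$ is Hausdorff as a product of Hausdorff spaces. It is the disjoint union of the products $e\times f$, since $K$ and $L$ are disjoint unions of their cells. The map $\Phi_e\times\Phi_f$ sends $\partial(D^p\times D^q)$ into $(K^{(p-1)}\times L)\cup(K\times L^{(q-1)})$, and this set lies in the $(p+q-1)$-skeleton of the product decomposition. On the interior $\mathrm{int}D^p\times \mathrm{int}D^q$ it is a continuous bijection onto $e\times f$. It is a relative homeomorphism because $\Phi_e$ and $\Phi_f$ are closed maps (compact domain, Hausdorff target), so their product is a closed map of compact Hausdorff spaces. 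For closure-finiteness, $\overline{e\times f}=\overline e\times\overline f$, which lies in (finitely many cells)$\times$(finitely many cells).

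\textbf{Condition (4), the main obstacle.} The crux is the weak topology: we must show that the product topology on $K\times L$ coincides with the topology coherent with the closed cells $\overline e\times\overline f$. Equivalently, the map
\[
\coprod \Phi_e\times\Phi_f:\ \coprod D^p\times D^q\to K\times L
\]
must be a quotient map. We know $\coprod_e D^p\to K$ and $\coprod_f D^q\to L$ are quotient maps. The issue is that a product of two quotient maps need not be a quotient map.

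In case (a), say $L$ is locally compact. I will use the standard fact (Whitehead's lemma) that if $\pi$ is a quotient map and $Y$ is locally compact Hausdorff, then $\pi\times \mathrm{id}_Y$ is a quotient map. The argument factors the map as
\[
\Big(\coprod D^p\Big)\times\Big(\coprod D^q\Big)\to K\times\coprod D^q\to K\times L .
\]
The first arrow is quotient because $\coprod D^q$ is locally compact. The second is quotient because $K$ is... not necessarily locally compact, so this order fails. I will instead factor through $\big(\coprod D^p\big)\times L$: the map $\pi_K\times \mathrm{id}_L$ is quotient since $L$ is locally compact, and $\mathrm{id}\times\pi_L$ is quotient since $\coprod D^p$ is locally compact. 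A composite of quotient maps is a quotient map.

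In case (b), I will show that a CW-complex with countably many cells is the union of an increasing sequence of finite subcomplexes $K_n$, and that $K$ carries the weak topology with respect to this sequence (each closed cell lies in some $K_n$). Similarly $L=\bigcup_n L_n$. Given $A\subset K\times L$ meeting each $\overline e\times\overline f$ in a closed set, each $A\cap(K_n\times L_n)$ is closed, since $K_n\times L_n$ is a finite union of compact sets $\overline e\times \overline f$. Then for a point $(x,y)\notin A$, I will construct inductively compact neighbourhoods $C_n\subset K_n$ of $x$ and $D_n\subset L_n$ of $y$. Each $C_{n+1}$ should contain $C_n$ in its interior relative to $K_{n+1}$, and we require $(C_n\times D_n)\cap A=\emptyset$. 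This extension step is possible by compactness and the standard cell-by-cell extension of neighbourhoods in CW-complexes. Then $U=\bigcup \mathrm{int}\,C_n$ and $V=\bigcup \mathrm{int}\,D_n$ are open in $K$ and $L$, and $U\times V$ is a product neighbourhood of $(x,y)$ missing $A$. Hence $A$ is closed in the product topology. This inductive neighbourhood construction is the delicate step of the whole proof.
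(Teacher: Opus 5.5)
The paper gives no proof of this statement. It is imported verbatim from Maunder and invoked only in case (b), to know that $\Delta\times\Delta$ (and hence its $3$-skeleton $\mathcal{K}_{3}$) is a CW-complex when $\mathcal{P}$ is countable. So your proposal supplies an argument rather than competing with one.

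Your route is the standard one, essentially the proof in Hatcher's appendix on cell complexes:
\begin{itemize}
\item product cells with characteristic maps $\Phi_e\times\Phi_f$;
\item the observation that only the weak-topology axiom is really at stake;
\item in case (a), Whitehead's lemma applied twice through the factorization $\pi_K\times\pi_L=(\pi_K\times\mathrm{id}_L)\circ(\mathrm{id}\times\pi_L)$;
\item in case (b), exhaustion by finite subcomplexes together with nested compact neighbourhoods.
\end{itemize}
Over the bare citation, this shows that conditions (1)--(3) hold for every product. It also shows exactly where each hypothesis enters: local compactness only to make a product of quotient maps a quotient map, countability only to get a sequential exhaustion by compact subcomplexes. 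The argument is correct after the following repairs.

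First, the set $(K^{(p-1)}\times L)\cup(K\times L^{(q-1)})$ is not contained in the $(p+q-1)$-skeleton of $K\times L$. When $p\geq 1$ it contains $v\times f'$ for a $0$-cell $v$ and cells $f'$ of arbitrary dimension. Use instead $\Phi_e(D^p)=\bar e\subseteq K^{(p)}$ and $\Phi_f(D^q)=\bar f\subseteq L^{(q)}$. These give $(\Phi_e\times\Phi_f)(\partial(D^p\times D^q))\subseteq (K^{(p-1)}\times L^{(q)})\cup(K^{(p)}\times L^{(q-1)})\subseteq (K\times L)^{(p+q-1)}$.

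Second, delete the abandoned first factorization in (a); the second one is the right one. To turn ``$\pi_K\times\pi_L$ is a quotient map'' into condition (4), you only need that $\Phi_e\times\Phi_f\colon D^p\times D^q\to\bar e\times\bar f$ is a closed surjection.

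Third, in (b) the extension step needs no ``cell-by-cell extension of neighbourhoods''. The set $C_n\times D_n$ is compact and disjoint from the closed subset $A\cap(K_{n+1}\times L_{n+1})$ of the compact Hausdorff space $K_{n+1}\times L_{n+1}$. The tube lemma then gives open sets $U'\supseteq C_n$ in $K_{n+1}$ and $V'\supseteq D_n$ in $L_{n+1}$ with $(U'\times V')\cap A=\emptyset$. Normality of $K_{n+1}$ and $L_{n+1}$ then yields compact neighbourhoods $C_{n+1}\subseteq U'$ and $D_{n+1}\subseteq V'$ of $C_n$ and $D_n$.

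Finally, say explicitly why $U$ is open and why $U\times V$ misses $A$. By the nesting, $U\cap K_m=\bigcup_{n\geq m}(\mathrm{int}_{K_n}C_n\cap K_m)$ is open in $K_m$ for every $m$. Any point of $U\times V$ lies in a single $C_N\times D_N$, which is disjoint from $A$.
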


There is also a more general notion than that of a CW-complex, the one of a\textit{\ cell complex}.

\begin{definition} (Definition 1.1 p. 6 of \cite{LW})
Let $X$ be a set. A \textit{cell structure} on $X$ is a pair $(X,\mathbf{\Phi })$ where $\Phi $ is a collection of maps of closed disks into $X$ satisfying the
following conditions.

\begin{enumerate}
\item[(i)]  If $\Phi \in \mathbf{\Phi }$ and $\Phi $ has domain $D^{n}$, then $\Phi $ is injective on $D^{n}-S^{n-1}$.

\item[(ii)]  The images $\{\Phi (D^{n}-S^{n-1})\mid \Phi \in \mathbf{\Phi }\} $ form a partition of $X$, i.e. they are disjoint and have union $X$.

\item[(iii)]  If $\Phi \in \mathbf{\Phi }$ and $\Phi $ has domain
$D^{n}$, then
\begin{equation*}
\Phi (S^{n-1})\subset \bigcup \{\Psi (D^{k}-S^{k-1})\mid \Psi \in \mathbf{\Phi }\text{ has domain }D^{k}\text{ and }k\leq n-1\}.
\end{equation*}
\end{enumerate}
\end{definition}

\bigskip

Note that in this definition we do not have any topology specified on $X$. If $\Phi \in \mathbf{\Phi }$ and $\Phi $ has domain $D^{n}$, we call $\sigma ^{n}=\Phi (D^{n})$ a closed $n$-cell and $\Phi $ its characteristic function. Also we denote $\Phi(S^{n-1})=(\sigma ^{n})^{\circ }$ and by abuse of language we call $\Phi (D^{n} - S^{n-1})$ an \textit{open }$n$ \textit{-cell}. It follows from Lemma 1.2, p. 7 of \cite{LW} that $\Phi(D^{n} - S^{n-1})=\sigma_{n} - (\sigma_{n})^{\circ}$ \newline

Two cell structures $(X,\mathbf{\Phi })$ and $(X,\mathbf{\Phi}^{\prime })$ are \textit{strictly equivalent} if there is a
one-to-one correspondence between $\mathbf{\Phi }$ and $\mathbf{\Phi }^{\prime }$ such that for a characteristic function
$\Phi \in \mathbf{\Phi }$ with domain $D^{n}$ there corresponds a characteristic function $\Phi ^{\prime }\in \mathbf{\Phi }^{\prime }$ with domain $D^{n}$ again, and there is a
homeomorphism of pairs $h:(D^{n},S^{n-1})\longrightarrow (D^{n},S^{n-1})$ such that $\Phi ^{\prime }=\Phi \circ h$. Strict
equivalence is an equivalence relation and for some cell structure $(X,\mathbf{\Phi })$ we denote by $\mathbb{S}_{\mathbf{\Phi }} $ the set of all pairs $(\sigma ^{n},[\Phi ])$, where $\sigma^{n}=\Phi(D^{n})$ and $[\Phi ]$ is the strict equivalence class of $\Phi \in \mathbf{\Phi }$. Obviously, if $(X,\mathbf{\Phi })$ and $(X,\mathbf{\Phi}^{\prime}) $ are strictly equivalent, then $\mathbb{S}_{\mathbf{\Phi }}=\mathbb{S}_{\mathbf{\Phi }^{\prime }}$.

\begin{definition} (Definition 1.3 p. 8 of \cite{LW})
A \textit{cell complex} on a set $X$ is an equivalence class of cell structures $(X,\mathbf{\Phi })$ under the equivalence relation
of strict equivalence. A cell complex on $X$ will be denoted by a pair $(X,\mathbb{S})$ where $\mathbb{S}=\mathbb{S}_{\mathbf{\Phi }}$ for some representative cell structure $(X,\mathbf{\Phi })$. The set $\mathbb{S}$ is called the set of \textit{closed cells} of $(X,\mathbb{S})$.
\end{definition}

\begin{definition} (Definition 1.4 p. 8 of \cite{LW})
A subcomplex $(A,\mathbb{J})$ of a cell complex $(X,\mathbb{S})$ is a cell complex such that $A\subseteq X$ and $\mathbb{J}\subseteq \mathbb{S}$.
\end{definition}

It is easy to see that every CW-complex is a cell complex and that every CW-subcomplex of a CW-complex is a cell subcomplex of it. 

\begin{definition} (Definition 6.1 p. 32 of \cite{LW})
Let $(X,\mathbb{S})$ be a cell complex and $\mathbb{R}$ an equivalence relation on $X$. Denote by $p$ the quotient map. Then $\mathbb{R}$ is a \textit{cellular equivalence} relation provided the following conditions are satisfied.

\begin{enumerate}
\item[$($1$)$]  If $\sigma \in \mathbb{S}$, then $p^{-1}p(\sigma-\sigma ^{\circ })$ is a union of open cells $\sigma _{i}-\sigma_{i}^{\circ }$ of the cellular partition of $X$.

\item[$($2$)$]  If $\sigma _{0}-\sigma _{0}^{\circ }\in p^{-1}p(\sigma -\sigma ^{\circ })$ is of minimal dimension among all such open cells in the
union, then $p\mid (\sigma _{0}-\sigma _{0}^{\circ })$ is a bijection onto $p(\sigma -\sigma ^{\circ })$ and $p(\sigma )=p(\sigma _{0})$. Such a cell $\sigma _{0}$ is called $\mathbb{R}$-\textit{minimal} for the cell $\sigma $.

\item[$($3$)$]  If $\sigma ^{\prime }$ and $\sigma ^{\prime \prime}$ are both $\mathbb{R}$-minimal for the cell $\sigma $ and if
$\Phi ^{\prime }$ and $\Phi ^{\prime \prime }$ are the respective characteristic functions, then there is a homeomorphism $h:D_{\sigma ^{\prime }}\longrightarrow D_{\sigma ^{\prime \prime}}$ such that $p\Phi ^{\prime }=p\Phi ^{\prime \prime }h$.
\end{enumerate}
\end{definition}

\begin{theorem} \label{cc} (Theorem 6.2 p. 32 of \cite{LW})
Let $(X,\mathbb{S})$ be a cell complex and $\mathbb{R}$ a cellular equivalence relation on $X$. Define $\mathbb{S}/\mathbb{R}=\{p(\sigma )\mid \sigma \in \mathbb{S}$ and $\sigma $ is $\mathbb{R}$-minimal$\}$. Then $(X/\mathbb{R},\mathbb{S}/\mathbb{R})$ is a cell complex.
\end{theorem}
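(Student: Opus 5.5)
We need to prove Theorem \ref{cc}: $(X/\mathbb{R},\mathbb{S}/\mathbb{R})$ is a cell complex. The plan is to exhibit a cell structure $(X/\mathbb{R},\mathbf{\Psi})$ whose strict equivalence class gives $\mathbb{S}/\mathbb{R}$. For each $\mathbb{R}$-minimal cell $\sigma_0\in\mathbb{S}$ with characteristic function $\Phi_0:D^n\to X$, we take $\Psi=p\circ\Phi_0$. Since different $\mathbb{R}$-minimal cells may have the same image, we first choose one representative $\sigma_0$ for each set $p(\sigma-\sigma^\circ)$. By condition (3), any other choice $\sigma'$ yields $p\Phi'=p\Phi_0 h$ for a homeomorphism $h$. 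This $h$ should be taken as a homeomorphism of pairs $(D^n,S^{n-1})$, which follows because $p\Phi'$ and $p\Phi_0$ are injective on open disks with the same image $p(\sigma-\sigma^\circ)$. Hence the class $[p\Phi_0]$, and therefore $\mathbb{S}/\mathbb{R}$, does not depend on the choice.

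Next we check axioms (i)–(iii) for $\mathbf{\Psi}$. For (i), $\Phi_0$ is injective on $D^n-S^{n-1}$ with image $\sigma_0-\sigma_0^\circ$ by Lemma 1.2 of \cite{LW}, and $p$ is injective there by condition (2). So $p\Phi_0$ is injective on the open disk. For (ii), let $y\in X/\mathbb{R}$ and pick $x\in p^{-1}(y)$. Then $x$ lies in a unique open cell $\sigma-\sigma^\circ$, and $y\in p(\sigma-\sigma^\circ)=p(\sigma_0-\sigma_0^\circ)$ for the minimal $\sigma_0$ chosen for $\sigma$, which gives covering. For disjointness, suppose $p(\sigma_0-\sigma_0^\circ)$ and $p(\tau_0-\tau_0^\circ)$ meet. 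Condition (1) says $p^{-1}p(\sigma_0-\sigma_0^\circ)$ is a union of open cells and contains a point of $\tau_0-\tau_0^\circ$, so it contains all of $\tau_0-\tau_0^\circ$. Therefore $p(\tau_0-\tau_0^\circ)\subseteq p(\sigma_0-\sigma_0^\circ)$, and symmetrically the reverse inclusion holds. The two images coincide, and by our choice of representatives the two cells are the same element.

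Axiom (iii) is the main obstacle. We need $p\Phi_0(S^{n-1})=p(\sigma_0^\circ)$ to lie in open cells of the quotient whose dimension is less than $n$. We know $\sigma_0^\circ$ is covered by open cells $\tau-\tau^\circ$ of $X$ with $\dim\tau\le n-1$, and $p(\tau-\tau^\circ)=p(\tau_0-\tau_0^\circ)$ for $\tau_0$ minimal for $\tau$. Minimality gives $\dim\tau_0\le\dim\tau\le n-1$, since $\tau$ itself is among the open cells in $p^{-1}p(\tau-\tau^\circ)$. This yields (iii). The delicate point is making sure the dimension assigned to a quotient cell is well defined. Two minimal cells with the same image have equal dimension because they both attain the minimum over the same union, and this is consistent with the homeomorphism in condition (3). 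With (i)–(iii) established, $(X/\mathbb{R},\mathbf{\Psi})$ is a cell structure, and its class $\mathbb{S}_{\mathbf{\Psi}}$ is $\{p(\sigma_0)\}$ together with classes $[p\Phi_0]$, which is $\mathbb{S}/\mathbb{R}$ since $p(\sigma)=p(\sigma_0)$ by condition (2).
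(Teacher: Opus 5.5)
The paper gives no proof of this statement. It is imported from Lundell--Weingram (Theorem 6.2 of \cite{LW}) and used as a black box, so there is nothing in the paper to compare against.

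Your argument is correct. You fix one $\mathbb{R}$-minimal representative $\sigma_0$ for each set $p(\sigma-\sigma^\circ)$ and take $p\circ\Phi_0$ as its characteristic map. You then verify axioms (i)--(iii) from conditions (1) and (2), and use condition (3) to show that the strict equivalence class of $p\circ\Phi_0$ does not depend on the choice. This is the natural direct verification. In particular, these steps are sound:
\begin{itemize}
\item the disjointness argument via condition (1), where a union of open cells meeting an open cell must contain it;
\item the dimension estimate $\dim\tau_0\le\dim\tau\le n-1$ in (iii).
\end{itemize}

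The one loose step is your reason for $h$ being a homeomorphism of pairs. Injectivity of $p\Phi'$ and $p\Phi_0$ on the open disks, together with equal images, does not by itself stop $h$ from sending an interior point into $S^{n-1}$. You also need $p\Phi_0(S^{n-1})\cap p(\sigma_0-\sigma_0^\circ)=\emptyset$, and likewise for $\Phi'$. Any of the following supplies it:
\begin{itemize}
\item Your own (ii) and (iii): the boundary lands in quotient cells of dimension $<n$. These are different representatives from $\sigma_0$, so by (ii) they are disjoint from $p(\sigma_0-\sigma_0^\circ)$.
\item Minimality and condition (1): such an intersection would force a lower-dimensional open cell into $p^{-1}p(\sigma_0-\sigma_0^\circ)$.
\item Invariance of domain, since every self-homeomorphism of $D^n$ preserves $S^{n-1}$.
\end{itemize}
So either cite invariance of domain, or place the independence-of-choice step after the verification of (i)--(iii).
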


\begin{definition} (Definition 6.3 p. 33 of \cite{LW})
If $(X,\mathbb{S})$ is a cell complex and $\mathbb{R}$ a cellular equivalence relation on $X$, the complex $(X/\mathbb{R},\mathbb{S}/\mathbb{R})$ is the \textit{quotient} or \textit{identification} complex of $(X,\mathbb{S})$ with respect to $\mathbb{R}$.
\end{definition}

\begin{definition} (Definition 4.1 p 27 of \cite{LW})
Let $(X,\mathbb{S})$ and $(Y,\mathbb{J})$ be cell complexes. A map $f: (X,\mathbb{S}) \rightarrow (Y,\mathbb{J})$ is called cellular provided that for each $n$, $f(X^{(n)}) \subseteq Y^{(n)}$. The map $f$ is called regular if it is cellular and for each $\sigma^{n} \in X^{(n)}$, if $f(\sigma^{n})=\tau^{m} \in Y^{(m)}$, then $f(\sigma^{n}-(\sigma^{n})^{\circ})=\tau^{m}-(\tau^{m})^{\circ}$.
\end{definition}

Suppose that $X$ is a space and for each $\lambda \in \Lambda $ there is given an attaching map\newline $f_{\lambda }:$ $\partial D_{\lambda }\longrightarrow X$, with $\partial D_{\lambda }$ the boundary of $D_{\lambda }$. We let
\begin{equation*}
\mathcal{B}=\bigcup \{D_{\lambda }\mid \lambda \in \Lambda \},\text{ }\partial \mathcal{B}=\bigcup \{\partial D_{\lambda }\mid \lambda \in \Lambda
\}
\end{equation*}
and
\begin{equation*}
F=\bigcup_{\lambda }f_{\lambda }:\partial \mathcal{B}\longrightarrow X
\end{equation*}
be the union map. With these notations we have the following two propositions.

\begin{proposition} \label{hd}(Proposition 2.1, p. 45 of \cite{LW})
Let $X$ be a Hausdorff space, and suppose that $Y=\mathcal{B}\coprod_{F}X$ is obtained by attaching the cells $\{D_{\lambda}\mid \lambda \in \Lambda \}$ to $X$. Then $Y$ is a Hausdorff space.
\end{proposition}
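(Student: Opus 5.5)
We must show that $Y=\mathcal{B}\coprod_{F}X$ is Hausdorff whenever $X$ is. Let $q:\mathcal{B}\sqcup X\to Y$ be the quotient map. It is injective on $X$ and on each open disk $D_\lambda-\partial D_\lambda$, and the images of these pieces partition $Y$. So every point of $Y$ is either $q(x)$ with $x\in X$, or $q(z)$ with $z$ an interior point of a unique $D_\lambda$. The plan is to separate two distinct points by constructing \emph{saturated} open sets in $\mathcal{B}\sqcup X$. These are open sets $U\sqcup\bigsqcup_\lambda V_\lambda$ with $U\subseteq X$ open, $V_\lambda\subseteq D_\lambda$ open, and $V_\lambda\cap\partial D_\lambda=f_\lambda^{-1}(U)$. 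Their images under $q$ are open in $Y$.

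\textbf{Case 1: both points are interior to disks.} If both lie in the same $D_\lambda$, take disjoint small open balls inside $D_\lambda-\partial D_\lambda$ around them. If they lie in different disks, take any open ball around each point inside its own open disk. Such sets are disjoint from $\partial\mathcal{B}$ and from $X$, hence saturated, and their images are disjoint open sets in $Y$.

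\textbf{Case 2: one point is $q(z)$ with $z\in D_\mu-\partial D_\mu$, the other is $q(x)$ with $x\in X$.} Write $|z|=r<1$ and choose $r<s<1$. Take the open ball $\{|w|<s\}\subseteq D_\mu$ around $z$. For $q(x)$, build a saturated neighbourhood from $U=X$ by setting $V_\lambda$ equal to the collar $\{w\in D_\lambda: |w|>s\}$ for $\lambda=\mu$ and $|w|>0$ (say) for $\lambda\neq\mu$, so that $V_\lambda\cap\partial D_\lambda=\partial D_\lambda=f_\lambda^{-1}(X)$. The images of these two saturated open sets are disjoint.

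\textbf{Case 3: both points lie in $X$, say $x\neq y$.} This is the main step. Choose disjoint open sets $U_x,U_y\subseteq X$ separating $x$ and $y$, using that $X$ is Hausdorff. The preimages $f_\lambda^{-1}(U_x)$ and $f_\lambda^{-1}(U_y)$ are disjoint open subsets of $\partial D_\lambda$. Extend each radially to a conical collar, writing points as $w=t\theta$ with $\theta\in\partial D_\lambda$:
\begin{equation*}
V_\lambda^x=\{t\theta : 1/2<t\le 1,\ \theta\in f_\lambda^{-1}(U_x)\},
\end{equation*}
and define $V_\lambda^y$ in the same way from $f_\lambda^{-1}(U_y)$. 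Each is open in $D_\lambda$, because it is the image of an open subset of $(1/2,1]\times\partial D_\lambda$ under the polar-coordinate map. That map is a homeomorphism onto the annulus $\{1/2<|w|\le 1\}$, which is open in $D_\lambda$. Moreover $V^x_\lambda\cap\partial D_\lambda=f_\lambda^{-1}(U_x)$, so $U_x\sqcup\bigsqcup_\lambda V^x_\lambda$ is saturated and open, and likewise for $y$. The two sets are disjoint because the sets of directions $\theta$ are disjoint. Hence their images are disjoint open neighbourhoods of $q(x)$ and $q(y)$.

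The only real care needed is in checking that these sets are saturated. The equivalence classes of $q$ are the singletons in the open disks and the sets $\{x\}\cup F^{-1}(x)$ for $x\in X$. A set $U\sqcup\bigsqcup_\lambda V_\lambda$ is saturated exactly when $V_\lambda\cap\partial D_\lambda=f_\lambda^{-1}(U)$, and all the sets above were built to satisfy this. Combining the three cases shows that $Y$ is Hausdorff.
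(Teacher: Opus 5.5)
The paper gives no proof of this statement: it is quoted from Lundell--Weingram and used as a black box, so there is no internal argument to compare against. Your proof is correct and is the standard one: you separate points by images of saturated open subsets of $\mathcal{B}\sqcup X$, characterised by $V_\lambda\cap\partial D_\lambda=f_\lambda^{-1}(U)$, and the conical collar over $f_\lambda^{-1}(U_x)$ separates two points of $X$. The argument works for cells of every dimension; for $0$-cells the boundary is empty and the collars are empty.
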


\begin{proposition} \label{c2cw}(Proposition 5.7, p. 59 of \cite{LW})
\label{cw-equi}Let $X$ be a CW-complex and $\mathbb{R}$ a cellular equivalence relation such that the space $X/\mathbb{R}$ is
Hausdorff. Then with the quotient structure on $X/\mathbb{R}$, the quotient space is a CW-complex and the quotient map $X \rightarrow X/\mathbb{R}$ is a regular cellular map.
\end{proposition}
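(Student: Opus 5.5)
By Theorem \ref{cc} we already know that $(X/\mathbb{R},\mathbb{S}/\mathbb{R})$ is a cell complex. Its characteristic maps are $p\Phi_{\sigma_0}$, where $\sigma_0$ runs over the $\mathbb{R}$-minimal cells and $\Phi_{\sigma_0}$ is the characteristic map of $\sigma_0$ in $X$. So I plan to check the four conditions of Definition \ref{CW-complex} for these data, using only the compactness of closed disks, the Hausdorff hypothesis on $X/\mathbb{R}$, and conditions (1)--(3) of a cellular equivalence. The Hausdorff condition is the hypothesis itself, and the disjoint-union condition is part of the cell-complex conclusion of Theorem \ref{cc}.

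\emph{Relative homeomorphism.} Fix an $\mathbb{R}$-minimal $n$-cell $\sigma_0$. The map $p\Phi_{\sigma_0}:D^n\to X/\mathbb{R}$ is continuous, and it goes from a compact space to a Hausdorff space. Hence it is a closed map, and its image $p(\sigma_0)$ is compact, hence closed. By condition (2), $p$ is a bijection from $\sigma_0-\sigma_0^{\circ}$ onto the open cell $p(\sigma-\sigma^{\circ})$. Condition (iii) of the cell structure from Theorem \ref{cc} gives that $p\Phi_{\sigma_0}(S^{n-1})$ lies in the $(n-1)$-skeleton of the quotient. A closed continuous map that is injective on $D^n-S^{n-1}$ onto the open cell, and sends $S^{n-1}$ into the $(n-1)$-skeleton, is a relative homeomorphism $(D^n,S^{n-1})\to(e\cup (X/\mathbb{R})^{(n-1)},(X/\mathbb{R})^{(n-1)})$. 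For this I would use the standard criterion that a closed quotient map restricted to a saturated open set is a homeomorphism. It also follows that the closure of the open cell is exactly $p(\sigma_0)$.

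\emph{Closure finiteness.} The set $p(\sigma_0)$ is covered by $p$ of the finitely many open cells of $X$ that meet $\sigma_0$. By condition (1), each of these images is an open cell of the quotient, so only finitely many quotient cells are needed.

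\emph{Weak topology.} I expect this to be the main step. Let $A\subseteq X/\mathbb{R}$ be such that $A\cap p(\sigma_0)$ is closed in $p(\sigma_0)$ for every minimal $\sigma_0$. Since $X/\mathbb{R}$ carries the quotient topology and $X$ is a CW-complex, it suffices to show that $p^{-1}(A)\cap\sigma$ is closed in $\sigma$ for every closed cell $\sigma$ of $X$, including the non-minimal ones. The key input is condition (2): $p(\sigma)=p(\sigma_0)$ for an $\mathbb{R}$-minimal $\sigma_0$ associated to $\sigma$. Then $A\cap p(\sigma)$ is closed in $p(\sigma)$, and
\[
p^{-1}(A)\cap\sigma=(p|_\sigma)^{-1}\bigl(A\cap p(\sigma)\bigr).
\]
This is closed in $\sigma$ because $p|_\sigma:\sigma\to p(\sigma)$ is continuous. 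Hence $A$ is closed, and the quotient topology agrees with the weak topology.

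\emph{The quotient map.} It remains to see that $p$ is regular cellular. By condition (1), $p$ maps each open cell $\sigma-\sigma^{\circ}$ onto the open cell $p(\sigma_0-\sigma_0^{\circ})$, with $\dim\sigma_0\le\dim\sigma$ by minimality. This gives $p(X^{(n)})\subseteq (X/\mathbb{R})^{(n)}$, which is cellularity. Together with $p(\sigma)=p(\sigma_0)$, the same fact is exactly the regularity condition.
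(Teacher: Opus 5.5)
The paper gives no proof of this statement. It quotes the result from Lundell--Weingram and then uses it as a black box in the proof of Theorem~\ref{embd}, so there is no in-paper route to compare yours with.

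Judged on its own, your argument is a correct and self-contained check of the four axioms of Definition~\ref{CW-complex} for the quotient structure of Theorem~\ref{cc}. It also shows where each hypothesis is used:
\begin{itemize}
\item The Hausdorff assumption is used only to make $p\Phi_{\sigma_0}\colon D^n\to X/\mathbb{R}$ a closed map (compact to Hausdorff). That one fact gives the relative homeomorphism, the closedness of $p(\sigma_0)$, and $\overline{e}=p(\sigma_0)$.
\item The equality $p(\sigma)=p(\sigma_0)$ from condition (2) is what transports the weak topology of $X$ to $X/\mathbb{R}$, and it is also what gives regularity of $p$.
\end{itemize}

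A few attributions and details should be tightened.

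\begin{itemize}
\item \emph{Condition (1) vs. condition (2).} In the closure-finiteness and cellularity paragraphs you cite condition (1) for the fact that $p$ of an arbitrary open cell $\sigma-\sigma^{\circ}$ of $X$ is an open cell of the quotient. The fact you need is condition (2): $p(\sigma-\sigma^{\circ})=p(\sigma_0-\sigma_0^{\circ})=p\Phi_{\sigma_0}(D^n-S^{n-1})$ for a minimal $\sigma_0$, and the right-hand side is by definition an open cell of $\mathbb{S}/\mathbb{R}$. Condition (1) alone only describes the preimage $p^{-1}p(\sigma-\sigma^{\circ})$.

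\item \emph{The relative-homeomorphism criterion.} The statement you want is: a closed continuous map restricted to a saturated subset is closed onto its image, so if it is injective there it is a homeomorphism onto its image.
  \begin{itemize}
  \item You should say why $D^n-S^{n-1}$ is saturated. We have $(p\Phi_{\sigma_0})^{-1}(e)=D^n-S^{n-1}$, because $p\Phi_{\sigma_0}(S^{n-1})$ lies in open cells of dimension $<n$. These are disjoint from $e$, since the open cells of the quotient partition it.
  \item Injectivity on the interior uses two facts: $\Phi_{\sigma_0}$ is injective on $D^n-S^{n-1}$ with image $\sigma_0-\sigma_0^{\circ}$, and $p$ restricted to $\sigma_0-\sigma_0^{\circ}$ is a bijection by condition (2).
  \end{itemize}

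\item \emph{Condition (3).} You do not use it directly. It enters only through Theorem~\ref{cc}, where it guarantees that the characteristic map $p\Phi_{\sigma_0}$ is independent, up to strict equivalence, of the choice of minimal $\sigma_0$.
\end{itemize}

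With these clarifications the proof is complete. The weak-topology step, which you flagged as the main one, is in fact routine once $p(\sigma)=p(\sigma_0)$ is available.
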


\begin{proposition} \label{idsc}(Proposition 2.1.1, p. 56 of \cite{FP})
Let $Y, X$ be CW-complexes and let $f: Y \rightarrow X$ be a regular map. Then,
\begin{itemize}
\item[(i)] the image by $f$ of a closed cell of $Y$ is a closed cell of $X$;
\item[(ii)] $f(Y)$ is a subcomplex of $X$;
\item[(iii)] the induced map $Y \rightarrow f(Y)$ is an identification.
\end{itemize}
\end{proposition}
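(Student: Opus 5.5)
The plan is to prove the three parts in order, each using the previous one. The main tool is the characteristic maps. Let $e$ be a $k$-cell of $Y$ with characteristic map $\Phi_e:(D^k,S^{k-1})\to (Y^{(k)},Y^{(k-1)})$, and suppose $f(\overline{e})=f(\Phi_e(D^k))$ meets the open cell $\tau$ of $X$ of dimension $m$.

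\emph{Part (i).} Regularity says $f(\overline{e})=\tau^m$ for some closed cell $\tau^m$ of $X$ and $f(e)=\tau^m-(\tau^m)^{\circ}$, so the statement is essentially built into the definition. To phrase the conclusion in CW terms, I will check that $\tau^m=\Phi_\tau(D^m)$ equals the closure $\overline{\tau}$ in $X$, using Lemma 8.15 of \cite{Rotman-T} quoted after Definition \ref{CW-complex}. This identifies the closed cell $f(\overline{e})$ with the closure of the open cell $f(e)$. Cellularity gives $m\le k$.

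\emph{Part (ii).} $f(Y)=\bigcup_e f(e)$, and by (i) each $f(e)$ is an open cell of $X$. So $f(Y)$ is a union of open cells of $X$. For closure, take a cell $\tau=f(e)$ in $f(Y)$. Then $\overline{\tau}=f(\overline{e})\subseteq f(Y)$, so $f(Y)$ is a subcomplex.

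\emph{Part (iii).} Let $Z=f(Y)$, a subcomplex. It is therefore a CW-complex whose topology is the weak topology with respect to its closed cells. Let $A\subseteq Z$ be such that $f^{-1}(A)$ is closed in $Y$. It suffices to show that $A\cap\overline{\tau}$ is closed in $\overline{\tau}$ for every cell $\tau$ of $Z$. Choose $e$ with $f(e)=\tau$, which is possible by (i). Then $f|_{\overline{e}}:\overline{e}\to\overline{\tau}$ is a continuous surjection. Since $\overline{e}=\Phi_e(D^k)$ is compact and $\overline{\tau}$ is Hausdorff, this map is closed, hence a quotient map. Now $A\cap\overline{\tau}=f(f^{-1}(A)\cap\overline{e})$ by surjectivity, and $f^{-1}(A)\cap\overline{e}$ is closed in $\overline{e}$. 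Hence $A\cap\overline{\tau}$ is closed, and $f:Y\to Z$ is an identification.

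I expect the main subtle step to be in (i)–(ii): making sure that the ``closed cell'' produced by regularity in the cell-complex sense of \cite{LW} really is the closure $\overline{\tau}$ of a CW-cell of $X$. This holds because every CW-complex is a cell complex with the same characteristic maps, so the closed cells coincide. The compact–Hausdorff argument in (iii) is routine once that identification is made.
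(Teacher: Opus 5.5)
The paper does not prove Proposition \ref{idsc}; it is quoted from Fritsch--Piccinini, so there is no internal argument to compare yours with. Your proof is correct and is the standard one. Part (ii) follows from (i) together with the fact that the open cells of $X$ partition $X$, so every open cell meeting $f(Y)$ is some $f(e)$. In (iii) you run the compact-to-Hausdorff argument on each $f|_{\overline{e}}\colon\overline{e}\to\overline{\tau}$. Combined with the standard fact that the subcomplex $f(Y)$ carries the weak topology with respect to its own closed cells (which follows from closure-finiteness), this gives exactly the identification property.

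The one point worth making explicit is that your part (i) depends on reading the definition of a regular map as \emph{requiring} that $f(\sigma^n)$ be a closed cell $\tau^m$. The paper's wording (``if $f(\sigma^n)=\tau^m\ldots$, then $\ldots$'') is conditional, and under the literal conditional reading (i) is false. For example, take $t\mapsto 2t$ from $[0,1]$ (vertices $0,1$, one $1$-cell) to $[0,2]$ (vertices $0,1,2$). This map is cellular and satisfies the condition vacuously on the $1$-cell, yet it sends that closed cell onto $[0,1]\cup[1,2]$, which is not a closed cell. Your reading is the intended one.

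If you prefer the weaker convention that a regular map is a cellular map carrying each open cell onto an open cell, then (i) is no longer tautological, but it needs only one more line. The set $f(\overline{e})$ is compact, hence closed in the Hausdorff space $X$, and it contains $f(e)=\tau$. So $\overline{\tau}\subseteq f(\overline{e})\subseteq\overline{f(e)}=\overline{\tau}$, and your (ii) and (iii) then go through unchanged.
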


\begin{definition} \label{R} (Definition 3.7 p. 165 of \cite{LW})
Let $(X,\mathbb{S})$ and $(Y,\mathbb{K})$ be oriented CW-complexes and let $f:X\longrightarrow Y$ be a cellular map. The cells of $\mathbb{S}$ are denoted by $\sigma_{\lambda }^{n}$ and those of $\mathbb{K}$ by $\tau _{\mu }^{n}$. If $f_{\#}:C_{n}(X)\longrightarrow C_{n}(Y)$ is the map induced by $f$, we write
\begin{equation*}
f_{\#}(\sigma ^{n})=\sum_{\mu \in M_{n}}[f:\sigma ^{n}:\tau _{\mu}^{n}]\tau _{\mu }^{n},
\end{equation*}
where $M_{n}$ a set of indices in a 1-1 correspondence with the set of distinct $n$-cells of $Y$, $[f:\sigma ^{n}:\tau _{\mu }^{n}]\in \mathbb{Z}$ and $[f:\sigma^{n}:\tau _{\mu }^{n}]=0$ for all but finitely many $\mu $. The integer $[f:\sigma ^{n}:\tau _{\mu }^{n}]$ is called the degree (or sometimes the incidence number) with which $\sigma ^{n}$ is mapped on $\tau _{\mu }^{n}$ by $f$.
\end{definition}
 
 \begin{proposition} \label{R1}
 (Proposition 3.8 p. 166 of \cite{LW}) Let $X$ and $Y$ be oriented CW-complexes, let $f: X \rightarrow Y$ be a cellular map, let $\sigma$ and $\tau$ be $n$-cells of $X$ and $Y$ respectively, and let $F(\sigma: \tau)$ be the composite map
\begin{equation} \label{Rr1}
\xymatrix{F(\sigma: \tau): D^{n}/ S^{n-1} \ar[r]^(0.60){\bar{\Phi}_{\sigma}} & X^{n}/X^{n-1} \ar[r]^{\bar{f}} & Y^{n}/Y^{n-1} \ar[r]^(0.55){j_{\tau}} & \tau/\partial \tau,}
\end{equation}
where $\bar{\Phi}_{\sigma}$ and $\bar{f}$ are the induced maps of the characteristic map $\Phi_{\sigma}$ and $f$ respectively, and $j_{\tau}$ is the map which collapses all other spheres of $Y^{n}/Y^{n-1}$ to the base point. Let $F(\sigma: \tau)_{\ast}$ be the endomorphism of $H_{n}(D^{n}/S^{n-1})$ induced by $F(\sigma: \tau)$. If $u_{\sigma}\bar{\xi}_{n}$ is the generator of $H_{n}(D^{n}/S^{n-1})$ corresponding to the orientation of $\sigma$ and $\tau$ is the generator of $H(\tau/\partial \tau)$, then $F(\sigma: \tau)_{\ast}(u_{\sigma}\bar{\xi}_{n})=[f:\sigma:\tau]\tau$. 
 \end{proposition}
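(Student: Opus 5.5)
The plan is to unwind the definitions of the cellular chain groups and of $f_{\#}$ so that the statement reduces to a naturality argument plus the identification of coordinate projections in a wedge of spheres. Recall that $C_{n}(X)=H_{n}(X^{n},X^{n-1})$. Since $(X^{n},X^{n-1})$ is a good pair (the skeleton is a neighbourhood deformation retract in $X^{n}$ by the pushout description of $X^{n}$ from $X^{n-1}$), we have a natural isomorphism $H_{n}(X^{n},X^{n-1})\cong \tilde H_{n}(X^{n}/X^{n-1})$. Moreover $X^{n}/X^{n-1}$ is a wedge $\bigvee_{\sigma}\sigma/\partial\sigma$ of $n$-spheres, one for each $n$-cell, and $\bar\Phi_{\sigma}:D^{n}/S^{n-1}\to \sigma/\partial\sigma\subseteq X^{n}/X^{n-1}$ is a homeomorphism onto the $\sigma$-summand. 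Under these identifications, the oriented cell $\sigma\in C_{n}(X)$ is by definition $\bar\Phi_{\sigma*}(u_{\sigma}\bar\xi_{n})$. The same holds for $Y$, with the oriented cell $\tau$ corresponding to the chosen generator of $\tilde H_{n}(\tau/\partial\tau)$.

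The first key step is to show that $\tilde H_{n}\bigl(\bigvee_{\mu}\tau_{\mu}/\partial\tau_{\mu}\bigr)\cong\bigoplus_{\mu}\tilde H_{n}(\tau_{\mu}/\partial\tau_{\mu})$, and that under this isomorphism the projection onto the $\tau$-coordinate is exactly $j_{\tau*}$, where $j_{\tau}$ collapses all other spheres to the base point.
\begin{itemize}
\item For finitely many summands this follows from the fact that the composite $\tau/\partial\tau\hookrightarrow \bigvee\to\tau/\partial\tau$ is the identity, while the composite through any other summand is constant.
\item For infinitely many cells, I would use that singular homology has compact supports. A compact set in the wedge (which carries the weak topology by Definition \ref{CW-complex}(4)) meets only finitely many of the spheres, so the homology is the direct limit of the finite wedges. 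The projection description passes to this limit.
\end{itemize}
This is also what makes the coefficients $[f:\sigma^{n}:\tau_{\mu}^{n}]$ of Definition \ref{R} well defined and almost all zero.

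The second step is naturality. Since $f$ is cellular, it induces a map of pairs $(X^{n},X^{n-1})\to(Y^{n},Y^{n-1})$, hence a map $\bar f: X^{n}/X^{n-1}\to Y^{n}/Y^{n-1}$, and $f_{\#}$ corresponds to $\bar f_{*}$ under the isomorphisms above. Therefore
\[
F(\sigma:\tau)_{*}(u_{\sigma}\bar\xi_{n})=j_{\tau*}\bar f_{*}\bar\Phi_{\sigma*}(u_{\sigma}\bar\xi_{n})=j_{\tau*}f_{\#}(\sigma)=j_{\tau*}\Bigl(\sum_{\mu}[f:\sigma:\tau_{\mu}]\tau_{\mu}\Bigr)=[f:\sigma:\tau]\tau,
\]
where the last equality uses the first step: $j_{\tau*}$ kills every $\tau_{\mu}\neq\tau$ and fixes $\tau$.

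I expect the main obstacle to be the first step in the infinite case. The essential point is that the quotient $X^{n}/X^{n-1}$ really carries the weak (wedge) topology, so that the compactness argument applies. I would derive this from the pushout description of $X^{n}$ together with the fact that quotient maps compose. The remaining identifications are standard consequences of excision and the good-pair property.
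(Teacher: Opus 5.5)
The paper gives no proof of its own here; it quotes Proposition 3.8 of Lundell--Weingram. Your argument is the standard proof and is correct: you identify $C_n=H_n(X^n,X^{n-1})\cong\tilde H_n(X^n/X^{n-1})$ naturally, so that $\bar\Phi_{\sigma*}(u_\sigma\bar\xi_n)$ is the oriented generator $\sigma$ and $\bar f_*$ corresponds to $f_{\#}$, and then note that $j_{\tau*}$ kills every $\tau_\mu\neq\tau$ and fixes $\tau$.

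The step you flag as the main obstacle is lighter than you expect. Definition \ref{R} already writes $f_{\#}(\sigma)$ as a finite sum, so you only need $j_\tau\circ i_\mu$ constant for $\mu\neq\tau$ and $j_\tau\circ i_\tau=\mathrm{id}$. The direct-sum decomposition for an arbitrary number of cells also follows without any compactness argument: the map $(\sqcup_\alpha D^n_\alpha,\sqcup_\alpha S^{n-1}_\alpha)\to(X^n,X^{n-1})$ induces a homeomorphism of quotients, and singular homology is additive over disjoint unions.
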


 \begin{corollary} \label{R2}(Corollary 3.9 p. 166 of \cite{LW})
 If $f,\sigma$ and $\tau$ are as above, and if $\tau$ is not a subset of $f(\sigma)$, then $[f:\sigma:\tau]=0$.
 \end{corollary}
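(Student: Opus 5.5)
The plan is to reduce the corollary to Proposition \ref{R1}. It suffices to show that, when $\tau\not\subseteq f(\sigma)$, the composite $F(\sigma:\tau)$ of (\ref{Rr1}) is not surjective. A non-surjective map into an $n$-sphere is null-homotopic, so it induces the zero endomorphism on $H_{n}$. Proposition \ref{R1} then gives $[f:\sigma:\tau]\tau = F(\sigma:\tau)_{\ast}(u_{\sigma}\bar{\xi}_{n})=0$. Since $\tau$ generates the infinite cyclic group $H_n(\tau/\partial\tau)$, this forces $[f:\sigma:\tau]=0$.

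\emph{Step 1: $\tau/\partial\tau$ is an $n$-sphere.} The characteristic map $\Phi_{\tau}:(D^{n},S^{n-1})\to(\tau,\partial\tau)$ is a relative homeomorphism, by condition 2 of Definition \ref{CW-complex} together with $\Phi_\tau(D^n)=\overline{\tau}$. Hence it induces a continuous bijection $D^{n}/S^{n-1}\to\tau/\partial\tau$. The source is compact and the target is Hausdorff, so this bijection is a homeomorphism, and $\tau/\partial\tau\cong S^{n}$.

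\emph{Step 2: a point of the open cell $\tau-\partial\tau$ lies outside $f(\sigma)$.} The closed cell $\sigma=\Phi_{\sigma}(D^{n})$ is compact, so $f(\sigma)$ is compact. Because $Y$ is Hausdorff, $f(\sigma)$ is closed in $Y$. Suppose the whole open cell $\tau-\partial\tau$ were contained in $f(\sigma)$. Then its closure $\tau$ would be contained in $f(\sigma)$ as well, contrary to hypothesis. So there is a point $y\in\tau-\partial\tau$ with $y\notin f(\sigma)$.

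\emph{Step 3: $[y]$ is not in the image of $F(\sigma:\tau)$.} Trace a point $x\in D^{n}$ through the composite. It goes to $[\Phi_{\sigma}(x)]$ in $X^{n}/X^{n-1}$, then to $[f\Phi_{\sigma}(x)]$ in $Y^{n}/Y^{n-1}$. The map $j_{\tau}$ sends every point outside the open cell $\tau-\partial\tau$ to the base point and is the identity on that open cell. Consequently the image of $F(\sigma:\tau)$ lies in
\[
\{\ast\}\cup\bigl(f(\sigma)\cap(\tau-\partial\tau)\bigr),
\]
which does not contain $[y]$; note that $[y]\neq\ast$ because $y\notin\partial\tau$. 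Therefore $F(\sigma:\tau)$ factors through $S^{n}-\{pt\}\cong\mathbb{R}^{n}$. This space is contractible, so $F(\sigma:\tau)$ is null-homotopic and $F(\sigma:\tau)_{\ast}=0$ on $H_{n}$ (here $n\ge 1$; for $n=0$ the claim is immediate from the definition of $f_{\#}$).

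The only real care needed is the bookkeeping in Step 3. One must check that $\bar f$ is well defined, which holds because $f$ is cellular, and that $j_{\tau}$ really collapses everything outside the open cell $\tau-\partial\tau$. Both follow from the construction of $Y^{n}/Y^{n-1}$ as a wedge of the spheres $\tau_{\mu}/\partial\tau_{\mu}$. I expect Step 2, passing from ``$\tau$ not contained in $f(\sigma)$'' to a missed interior point, to be the main point where Hausdorffness and compactness are genuinely used.
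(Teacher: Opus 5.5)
Your argument is correct and complete. You use compactness to make $f(\sigma)$ closed and the fact that $\tau$ is the closure of its open cell to find a point of the $n$-sphere $\tau/\partial\tau$ missed by $F(\sigma:\tau)$. Hence $F(\sigma:\tau)$ factors through $\tau/\partial\tau$ minus a point, which is homeomorphic to $\mathbb{R}^{n}$, so it kills $H_{n}$, and Proposition~\ref{R1} forces $[f:\sigma:\tau]=0$; your separate treatment of $n=0$ is also fine.

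The paper gives no proof of this statement and simply imports it from \cite{LW}. Your reduction to Proposition~\ref{R1} via non-surjectivity is the natural route there.
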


\section{The graph of derivations of a monoid presentation}

A rewriting system is a pair $\cP=( \mathbf{x}, \mathbf{r})$ where $\mathbf{x}$ is a non empty set and $\mathbf{r}$ is a set of rules $r=(r_{+1},r_{-1}) \in F \times F$ where $F$ is the free monoid on $\mathbf{x}$ whose elements are ''words'' with letters from $\mathbf{x}$ and its unit element is the empty word $\lambda$. We will assume throughout that for every $r \in \mathbf{r}$, $r_{+1}\neq r_{-1}$. Related with $\mathbf{r}$ there is the so called the one single step reduction of words
$$\rightarrow_{\mathbf{r}}=\{(ur_{+1}v,ur_{-1}r)|r \in \mathbf{r} \text{ and } u, v \in F\}.$$
The reflexive and transitive closure of $\rightarrow_{\mathbf{r}}$ is denoted by $\rightarrow_{\mathbf{r}}^{\ast}$, and the reflexive, transitive and symmetric closure is denoted by $\leftrightarrow_{\mathbf{r}}^{\ast}$ and is also known as the Thue congruence generated by $\mathbf{r}$. The quotient $F/\leftrightarrow_{\mathbf{r}}^{\ast}$ forms a monoid $S$ whose elements are the congruence classes $\bar{u}$ of words $u \in F$, and the multiplication is given by $\bar{u}\cdot \bar{v}= \overline{uv}$. We say that the monoid $S$ is given by $\cP$, or that $\cP$ is a presentation for $S$.

For every rewriting system $\cP=( \mathbf{x}, \mathbf{r})$ we can define its graph of derivations $\Gamma(\cP)$ whose vertices are the elements of $F$, and the edges are all quadruples 
$$e=(w,r, \varepsilon, w') \text{ where } w,w' \in F, \varepsilon=\pm 1, r \in \mathbf{r},$$
with initial, terminal and inverse functions
$$\iota (e)= wr_{\varepsilon}w', \tau (e)= wr_{-\varepsilon}w' \text{ and } e^{-1}=(w,r, -\varepsilon, w').$$
The edge $e$ is called positive if $\varepsilon=1$. We can think of $\Gamma(\cP)$ as a one dimensional cw-complex with 0-cells all the elements of $F$ and with 1-cells all positive edges. We note here that $e^{-1}=(w,r, -1, w')$ is not a new edge attached to the complex, but is defined to mean the topological inverse of the characteristic map of $e=(w,r, 1, w')$. By definition, $\iota(e^{-1})=\tau(e)$ and $\tau(e^{-1})=\iota(e)$. 

A path $P$ of length $n$ in $\Gamma(\cP)$ is a sequence of edges $P=e^{\varepsilon_{1}}_{1} \circ \dots \circ e^{\varepsilon_{i}}_{i} \circ e^{\varepsilon_{i+1}}_{i+1}\circ \dots \circ e^{\varepsilon_{n}}_{n}$ where $\tau (e^{\varepsilon_{i}}_{i})=\iota (e^{\varepsilon_{i+1}}_{i+1})$ for $1 \leq i \leq n-1$. It is called positive if the edges are positive, that is each $\varepsilon_{i}=1$, and is called closed if $\iota (e_{1})=\tau (e_{n})$. The inverse of a path $P=e^{\varepsilon_{1}}_{1} \circ \dots \circ e^{\varepsilon_{i}}_{i} \circ e^{\varepsilon_{i+1}}_{i+1}\circ \dots \circ e^{\varepsilon_{n}}_{n}$ is the path $P^{-1}=e^{-\varepsilon_{n}}_{n} \circ \dots \circ e^{-\varepsilon_{i+1}}_{i+1} \circ e^{-\varepsilon_{i}}_{i}\circ \dots \circ e^{-\varepsilon_{1}}_{1}$.

Note that there is a 1-1 correspondence between the elements of $S$ given by $\cP$ and the connected components of $\Gamma(\cP)$ since  $u \leftrightarrow_{\mathbf{r}}^{\ast} v$ if and only if there is a path in $\Gamma(\cP)$ connecting $u$ with $v$. 

There is a natural two-sided action of $F$ on $\Gamma(\cP)$. The action on vertices is given by the multiplication of $F$, and the action of $z,z' \in F$ on edges $e=(w,r, \varepsilon, w')$ is given by
$$z.e.z'=(zw,r, \varepsilon, w'z'),$$
and sometimes is called translation. This action extends to paths in the obvious way. We note here that
$$(z.e.z')^{-1}=(zw,r, \varepsilon, w'z')^{-1}=(zw, r, -\varepsilon, w'z')=z. (w, r, -\varepsilon, w').z'= z. e^{-1}. z'.$$
We say that a positive edge $e$ is left reduced (resp. right reduced) if it cannot be written in the form $u.f$ (resp. $f. u$) for some non empty word $u \in F$ and a positive edge $f$. 

It is necessary to make a topological interpretation for all the above. We will identify a positive edge with the characteristic map of a certain 1-cell which arises from the edge itself. This will be done in the following fashion. Consider the set $F \times \mathbf{r} \times F$. For each $\kappa= (w, r, w') \in F \times \mathbf{r} \times F$ we let $D_{\kappa}^{1}$ be a copy of the unit segment $I$ and let $S_{\kappa}^{0}=\partial D_{\kappa}^{1}=\{0,1\}$. For each $\kappa= (w, r, w') \in F \times \mathbf{r} \times F$ we define the attaching map of $E=(w,r,1,w')$
$$\phi_{\kappa}: \{0,1\} \rightarrow F \text{ such that } \phi_{\kappa}(0)=wr_{+1}w', \phi_{\kappa}(1)=wr_{-1}w'.$$
Now we let $\Gamma(\cP)$ be the quotient space
$$F \sqcup_{\phi_{\kappa}} D^{1}_{\kappa}= \left(F \underset{\kappa \in F \times \mathbf{r} \times F}{\sqcup} D^{1}_{\kappa}\right)/\sim$$
where $\sim$ is the equivalence relation generated by all pairs
$$(0, \phi_{\kappa}(0)) \text{ and } (1,\phi_{\kappa}(1)),$$
whenever $\kappa \in F \times \mathbf{r} \times F$. Alternatively $\Gamma(\cP)$ is given by the following pushout diagram
\begin{equation*}
\xymatrix@R=30pt @C=60pt{ \underset{\kappa}{\sqcup}S^{0}_{\kappa} \ar[r]^{\iota} \ar[d]^{\phi} &  \underset{\kappa}{\sqcup}D^{1}_{\kappa} \ar[d]^{\nu_{\Gamma} \iota_{2}}\\
F \ar[r]^{\nu_{\Gamma}\iota_{1}} & F \sqcup_{\phi_{\kappa}} D^{1}_{\kappa}}
\end{equation*}
where 
\begin{align*}
& \iota && \text{is the inclusion of the boundary,}\\
&\phi=\underset{\kappa}{\sqcup} \phi_{\kappa} && \text{is the direct sum map,}\\
&\iota_{1}: F \rightarrow F \underset{\kappa \in F \times \mathbf{r} \times F}{\sqcup} D^{1}_{\kappa},\\
& \iota_{2}: \underset{\kappa \in F \times \mathbf{r} \times F}{\sqcup} D^{1}_{\kappa} \rightarrow F \underset{\kappa \in F \times \mathbf{r} \times F}{\sqcup} D^{1}_{\kappa} && \text{are the inclusion maps, and}\\
&\nu_{\Gamma}: F \underset{\kappa \in F \times \mathbf{r} \times F}{\sqcup} D^{1}_{\kappa} \rightarrow \left(F \underset{\kappa \in F \times \mathbf{r} \times F}{\sqcup} D^{1}_{\kappa}\right)/\sim && \text{is the quotient map}
\end{align*}
The fact the above diagram is a push out diagram will be used later. In this way each 1-cell attached to the discrete space $F$ is in 1-1 correspondence with a 4-tuple $\kappa=(w, r, w')$ (here $r=(r_{+1}, r_{-1})$ is itself a pair). It is this correspondence which enables us to interpret the previously defined positive edge $e=(w, r, 1, w')$ as the characteristic map $\Phi_{\kappa}: D^{1}_{\kappa} \rightarrow \Gamma(\cP)$ with label $\kappa=(w, r, w')$ which is given by
\begin{equation} \label{defEk}
\Phi_{\kappa}= \nu_{\Gamma} \circ \iota_{2} \circ \iota_{\kappa},
\end{equation}
where 
$$\iota_{\kappa}: D^{1}_{\kappa} \rightarrow  \underset{\kappa \in F \times \mathbf{r} \times F}{\sqcup} D^{1}_{\kappa}$$
is the inclusion map. Latter in the paper we will change this identification to $\Phi_{e}$ without causing any confusion since any positive edge $e=(w, r, 1, w')$ is uniquely determined by $\kappa=(w, r, w')$. Note that $\Phi_{\kappa}$ is at the same time a path in $\Gamma(\cP)$ such that
$$\Phi_{\kappa}(0)=\nu_{\Gamma} (\iota_{2}(\iota_{\kappa}(0)))=\nu_{\Gamma}(\iota_{2}(\phi_{\kappa}(0)))= wr_{+1}w'=\iota(e),$$
and
$$\Phi_{\kappa}(1)=\nu_{\Gamma}(\iota_{2}(\iota_{\kappa}(1)))=\nu_{\Gamma}(\iota_{2}(\phi_{\kappa}(1)))= wr_{-1}w'=\tau(e).$$
\begin{remark} \label{hom1}
For each positive edge $E=(u, r, 1, v)$, the characteristic map $\Phi_{\kappa}$ with $\kappa=(u, r,v)$, maps $I$ homeomorphically onto $E$ since $ur_{+1}v\neq ur_{-1}v$. 
\end{remark}
The inverse path $\Phi^{-1}_{\kappa}$ of $\Phi_{\kappa}$ is given by
\begin{equation} \label{defEk-1}
\Phi^{-1}_{\kappa}(t)=\Phi_{\kappa}(1-t),
\end{equation}
hence
$$\Phi^{-1}_{\kappa}(0)=\Phi_{\kappa}(1)=wr_{-1}w'=\tau(e)$$
and
$$\Phi^{-1}_{\kappa}(1)=\Phi_{\kappa}(0)=wr_{+1}w'=\iota(e),$$
which justify the definition of $e^{-1}=(w, r, -1, w')$.

Further, a path $p$ defined before as a concatenation of edges 
$$p=e_{1}^{\varepsilon_{1}} \circ \dots \circ e_{n}^{\varepsilon_{n}},$$
where each $e_{i}$ is a positive edge and $\varepsilon_{i}=\pm1$, is in this new topological setting the concatenation 
\begin{equation} \label{tildep}
\tilde{p}=\Phi^{\varepsilon_{1}}_{\kappa_{1}} \ast \dots \ast \Phi^{\varepsilon_{n}}_{\kappa_{n}}
\end{equation}
of the corresponding paths $\Phi_{\kappa_{i}}^{\varepsilon_{i}}$ where $\Phi_{\kappa_{i}}$ is the characteristic map of $e_{i}$. 

Finally we interpret topologically the bi action of $F$ on the quotient space $\Gamma(\cP)$. For every fixed $z,z' \in F$, we let 
$$\alpha_{(z,z')}: F \rightarrow \Gamma(\cP) \text{ such that } u \mapsto \nu_{\Gamma} (\iota_{1} (zuz')).$$
This mapping is continuous since it is a composite of mappings $\nu_{\Gamma}$, $\iota_{1}$ and the mapping $F\rightarrow F$ given by $u \mapsto zuz'$, all being continuous. Before we define a second mapping, we let for $z,z' \in F$ and $\kappa=(w,r,w') \in F \times \mathbf{r} \times F$, $z.\kappa.z'=(zw,r, w'z')$. Now we define
$$\beta=\underset{\kappa}{\sqcup}\beta_{\kappa}: \underset{\kappa}{\sqcup}D^{1}_{\kappa} \rightarrow \Gamma(\cP)$$
where each
$$\beta_{\kappa}: D_{\kappa}^{1} \rightarrow \Gamma(\cP) \text{ is given by } \beta_{\kappa}(t)=\nu_{\Gamma}( \iota_{2} (\iota_{z.\kappa.z'}(t))).$$
Also this mapping is continuous as a composite of continuous maps, consequently so is $\beta$. Now we see that for each $\kappa=(w,r,w') \in F \times \mathbf{r} \times F$ we have that for $0 \in S^{0}_{\kappa}$, 
\begin{align*}
(\alpha_{(z,z')} \circ \phi)(0)&= \alpha_{(z,z')}(\phi_{\kappa}(0))\\
&= \alpha_{(z,z')} (wr_{+1}w')\\
&= \nu_{\Gamma} (\iota_{1}(zwr_{+1}w'z'))\\
&= \nu_{\Gamma} (\iota_{1}(\phi_{z.\kappa.z'}(0)))\\
&= \nu_{\Gamma} (\iota_{2}(\iota_{z.\kappa.z'}(0))) \tag{definition of $\sim$}\\
&= (\beta \circ \iota)(0).
\end{align*}
and similarly, for $1 \in S^{0}_{\kappa}$, 
\begin{align*}
(\alpha_{(z,z')} \circ \phi)(1)&= \alpha_{(z,z')}(\phi_{\kappa}(1))\\
&= \alpha_{(z,z')} (wr_{-1}w')\\
&= \nu_{\Gamma} (\iota_{1}(zwr_{-1}w'z'))\\
&= \nu_{\Gamma} (\iota_{1}(\phi_{z.\kappa.z'}(1)))\\
&= \nu_{\Gamma} (\iota_{2}(\iota_{z.\kappa.z'}(1))) \tag{definition of $\sim$}\\
&= (\beta \circ \iota)(1),
\end{align*}
Therefore we have the existence of a mapping $\gamma_{(z,z')}$ which makes the diagram commute 
\begin{equation} \label{po1}
\xymatrix@R=1.2cm@C=2.0cm{ \underset{\kappa}{\sqcup}S^{0}_{\kappa} \ar[r]^{\iota} \ar[d]^{\phi} &  \underset{\kappa}{\sqcup}D^{1}_{\kappa} \ar[d]^{\nu_{\Gamma} \iota_{2}} 
\ar@/^1pc/[rdd]^{\beta}   \\
F  \ar[r]^{\nu_{\Gamma}\iota_{1}} \ar@/_1pc/[rrd]_{\alpha_{(z,z')}}   & F \sqcup_{\phi_{\kappa}} D^{1}_{\kappa} \ar@{-->}[rd]|{\gamma_{(z,z')}} \\
&          & F \sqcup_{\phi_{\kappa}} D^{1}_{\kappa} }
\end{equation}
The commutativity of the lower triangle of the diagram (\ref{po1}) implies that for every $u \in F$, 
$$\gamma_{(z,z')} (\nu_{\Gamma}(\iota_{1}(u)))=\alpha_{(z,z')}(u)= \nu_{\Gamma}(\iota_{1}(zuz')),$$
which matches with the combinatorial definition of the bi action on the set of vertices. Now we use the commutativity of the upper triangle to see how $\gamma_{(z,z')}$ behaves on the set of edges of $\Gamma(\cP)$. For each $\kappa \in F \times \mathbf{r} \times F$, and for each $t \in D_{\kappa}^{1}$,
\begin{align*}
 \gamma_{(z,z')}(\Phi_{\kappa}(t))&=\gamma \nu_{\Gamma} (\iota_{2}(\iota_{\kappa}(t)))\\
 &=\beta_{\kappa}(t)\\
 &=\nu_{\Gamma}(\iota_{2}(\iota_{z.\kappa.z'}(t)))\\
 &= \Phi_{z.\kappa.z'}(t),
\end{align*}
which means that we have the equality of paths
\begin{equation} \label{eqp}
\Phi_{z.\kappa.z'}=\gamma_{(z,z')} \circ \Phi_{\kappa}.
\end{equation}
In particular we have for a positive edge $e=(w,r,1,w')$ (with $\kappa=(w,r,w')$) that
\begin{align*}
\gamma_{(z,z')} (\iota(e))&=\gamma_{(z,z')}(\Phi_{\kappa}(0))\\
&=\Phi_{z.\kappa.z'}(0)\\
&= zwr_{+1}w'z'\\
&= z \iota(e) z',
\end{align*}
and
\begin{align*}
\gamma_{(z,z')} (\tau(e))&=\gamma_{(z,z')}(\Phi_{\kappa}(1))\\
&=\Phi_{z.\kappa.z'}(1)\\
&= zwr_{-1}w'z'\\
&= z \tau(e) z'.
\end{align*}
In a similar fashion we can show that
\begin{equation} \label{eqp-1}
\Phi^{-1}_{z.\kappa.z'}=\gamma_{(z,z')} \circ \Phi^{-1}_{\kappa},
\end{equation}
from which it follows that
$$\gamma (\iota(e^{-1}))= z \tau(e) z' ,$$
and
$$\gamma_{(z,z')} (\tau(e^{-1}))= z \iota(e) z'.$$
These justify the combinatorial definition of the bi action of $F$ on the set of positive edges and their inverses. Now we see how this definition extends to paths in $\Gamma(\cP)$. Let $p=e^{\varepsilon_{1}}_{1} \circ \dots \circ e_{n}^{\varepsilon_{n}}$ be any path in $\Gamma(\cP)$ and let for $1\leq i\leq n$, $\Phi_{\kappa_{i}}$ be the characteristic map of the positive edge $e_{i}$. Then, 
\begin{align*}
\gamma_{(z,z')} \circ (\Phi_{\kappa_{1}}^{\varepsilon_{1}} \ast \dots \ast \Phi_{\kappa_{n}}^{\varepsilon_{n}})&=(\gamma_{(z,z')} \circ \Phi_{\kappa_{1}}^{\varepsilon_{1}}) \ast \dots \ast (\gamma_{(z,z')} \circ \Phi_{\kappa_{n}}^{\varepsilon_{n}}) \tag{holds true in general}\\
&= \Phi_{z.\kappa_{1}.z'}^{\varepsilon_{1}} \ast \dots \ast \Phi_{z.\kappa_{n}.z'}^{\varepsilon_{n}}, \tag{by (\ref{eqp}) and (\ref{eqp-1})}
\end{align*}
where the image in $\Gamma(\cP)$ of $\Phi_{z.\kappa_{1}.z'}^{\varepsilon_{1}} \ast \dots \ast \Phi_{z.\kappa_{n}.z'}^{\varepsilon_{n}}$ is $z.p.z'$. With the notations established in (\ref{tildep}) we have that
\begin{equation} \label{gtp}
\gamma_{(z,z')} \circ \tilde{p}=\widetilde{z.p.z'},
\end{equation}
which justifies the previous combinatorial definition. 

To conclude this section we show that the bi action of $F$ extends naturally to any topological path on $\Gamma(\cP)$. Indeed, let $p: I \rightarrow \Gamma(\cP)$ be a continuous map (a topological path on $\Gamma(\cP)$), then for any pair $z,z' \in F$ letting $\gamma_{(z,z')}: \Gamma(\cP) \rightarrow \Gamma(\cP)$ be the mapping defined above for that pair, we have the path 
$$\gamma_{(z,z')} \circ p: I \rightarrow \Gamma(\cP) \text{ such that } (\gamma_{(z,z')} \circ p)(t)=\gamma_{(z,z')}(p(t)),$$
for every $t \in I$. If it happens that $p$ is the constant path $c_{w}$ at some vertex $w \in F$, then
$$(\gamma_{(z,z')} \circ c_{w})(t)= \gamma_{(z,z')} (c_{w}(t))=\gamma_{(z,z')} (w)=zwz',$$
consequently
\begin{equation} \label{gmc}
\gamma_{(z,z')} \circ c_{w}=c_{zwz'}.
\end{equation}

\section{Attaching 2-cells to $\Gamma(\cP)$}

The Squier complex $\mathcal{D}(\mathcal{P})$ associated with $\cP$ is a combinatorial 2-complex with 1-skeleton $\Gamma(\cP)$, to which, for each pair of positive edges $e,f$ a 2-cell $[e,f]$ is attached along the closed path
$$\delta[e,f]=e. \iota (f) \circ \tau (e). f \circ (e.\tau (f))^{-1}\circ (\iota (e). f)^{-1}.$$
Sometimes we refer the 2-cell $[e,f]$ as a square 2-cell, though the name for it is not well-chosen as the following shows.
\begin{example}
Assume that $\cP=(\mathbf{x}, \mathbf{r})$ and there is some rewriting rule $r=(r_{+1}, \lambda)$, so $r_{-1}=\lambda$. If $e=f=(\lambda,r,1,\lambda)$, then 
$$e. \iota (f)=(\lambda, r, 1, r) \text{ and } \iota(f).e= (r, r, 1, \lambda)$$
are parallel in the sense that they have the same endpoints, and 
$$\tau(e).f=(\lambda,r,1,\lambda)=e.\tau{f}$$
are the same edge. In this case the closed path $\delta[e,f]$ is not even a simple path which passes once in every of its vertices except for $rr$.
\end{example}
If we let 
 $$\mathfrak{S}=\{\delta[e,f]| e, f \text{ are positive edges}\},$$ 
 then the set of 2-cells attached to the complex is in 1-1 correspondence with the distinct loops in $\mathfrak{S}$. In other words, every square 2-cell $[e,f]$ is uniquely determined by the loop $\delta[e,f]$ which will be referred in what follows as the combinatorial boundary of $[e,f]$. It may happen though that different pairs of positive edges give rise to the same square 2-cell which means that the corresponding loops defined above are the same. The following lemma characterises those pairs.
\begin{lemma} \label{unamb}
Two square cells $[e,f]$ and $[e',f']$ coincide if and only if there is some $w \in F$ such that $e=e'.w$ and $f'=w.f$, or $e'=e.w$ and $f=w.f'$.
\end{lemma}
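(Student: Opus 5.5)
The plan is to reduce the statement to an equality of quadruples in the free monoid $F$. By the convention adopted just before the lemma, a square cell is determined by its combinatorial boundary. So $[e,f]=[e',f']$ means $\delta[e,f]=\delta[e',f']$ as loops, i.e. as sequences of (signed) edges of $\Gamma(\cP)$. A loop in $\Gamma(\cP)$, regarded as the concatenation $\tilde p$ of characteristic maps in (\ref{tildep}), determines its sequence of signed edges: each $\Phi_\kappa$ maps $I$ homeomorphically onto its closed 1-cell by Remark \ref{hom1}, and distinct $\kappa$ give distinct open cells. Hence $\delta[e,f]=\delta[e',f']$ is equivalent to the four equalities
\begin{equation*}
e.\iota(f)=e'.\iota(f'),\quad \tau(e).f=\tau(e').f',\quad e.\tau(f)=e'.\tau(f'),\quad \iota(e).f=\iota(e').f'.
\end{equation*}
I expect this identification of loop equality with termwise edge equality to be the only delicate point. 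I would justify it by the cell decomposition of $\Gamma(\cP)$, or simply take it as the meaning of equality of loops in $\mathfrak{S}$.

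For the forward direction, write $e=(u,r,1,v)$, $f=(p,s,1,q)$, $e'=(u',r',1,v')$ and $f'=(p',s',1,q')$. Then
\begin{equation*}
e.\iota(f)=(u,r,1,vps_{+1}q) \quad\text{and}\quad \iota(e).f=(ur_{+1}vp,s,1,q).
\end{equation*}
Comparing components in the first and fourth equalities gives $u=u'$, $r=r'$, $s=s'$ and $q=q'$. It also gives $ur_{+1}vp=u'r'_{+1}v'p'$, and cancelling in the free monoid yields $vp=v'p'$. By equidivisibility of $F$ (compare lengths of $v$ and $v'$), either $v=v'w$ and $p'=wp$ for some $w\in F$, or $v'=vw$ and $p=wp'$. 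In the first case $e=(u',r',1,v'w)=e'.w$ and $f'=(wp,s,1,q)=w.f$. The second case is symmetric and gives $e'=e.w$, $f=w.f'$.

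For the converse, assume $e=e'.w$ and $f'=w.f$; the other case is symmetric. The action is associative and compatible with $\iota,\tau$, namely $\iota(w.f)=w\iota(f)$ and $\tau(e'.w)=\tau(e')w$. Using these, we get
\begin{equation*}
e.\iota(f)=e'.(w\iota(f))=e'.\iota(f'),\qquad \iota(e).f=\iota(e')w.f=\iota(e').f',
\end{equation*}
and similarly $\tau(e).f=\tau(e').f'$ and $e.\tau(f)=e'.\tau(f')$. Hence the four edges of the two boundaries agree in order and sign, so $\delta[e,f]=\delta[e',f']$ and the cells coincide.
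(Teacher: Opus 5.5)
Your proof is correct and follows essentially the same route as the paper. Both arguments equate the boundary edges termwise, read off $u=u'$ and $r=r'$ (and the corresponding equalities for $f$) componentwise, cancel in $F$ to get $vp=v'p'$, and split by which of $v,v'$ is a prefix; the converse is checked via compatibility of the translation action with $\iota,\tau$. The only cosmetic difference is that you extract everything from $e.\iota(f)$ and $\iota(e).f$ alone, as the paper does in Corollary \ref{cunamb}, whereas the paper's proof of this lemma also brings in the edge $\tau(e).f$.
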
 
\begin{proof}
Let us prove first that $[e.w, f]=[e,w.f]$. Since a square 2-cell is uniquely determined by its own combinatorial boundary, we should prove that the 2-cells have the same combinatorial boundaries. Indeed,
\begin{align*}
\partial [e.w, f]&= ((e. w). \iota f) \circ ((\tau e \cdot w). f) \circ ((e^{-1}.w).\tau f) \circ ((\iota e \cdot w). f^{-1})\\
&= (e .(w \cdot \iota f)) \circ (\tau e. (w.f)) \circ (e^{-1}. (w \cdot \tau f)) \circ (\iota e . (w.f^{-1})\\
&= (e . (\iota(w.f))) \circ (\tau e . (w.f)) \circ (e^{-1}. (\tau(w.f))) \circ (\iota e . (w.f)^{-1}) \tag{by (\ref{eqp}) and (\ref{eqp-1})}\\
&= \partial [e,w.f].
\end{align*}
Conversely, assume that $[e,f]=[e',f']$ and let $e=(u, r,1, v)$, $e'=(u', r',1, v')$, $f=(\bar{u}, \bar{r}, 1, \bar{v})$ and $f'=(\bar{u}', \bar{r}', 1, \bar{v}')$. Let $r=(a,b)$, $r'=(a',b')$, $\bar{r}=(\bar{a}, \bar{b})$ and $\bar{r}'=(\bar{a}',\bar{b}')$. Equating the combinatorial boundaries we have the following
\begin{align*}
&(u, r, 1, v \cdot \bar{u}\bar{a}\bar{v})=e . \iota f=e' . \iota f'= (u', r', 1, v' \cdot \bar{u}' \bar{a}' \bar{v'})\\
&(ubv \cdot \bar{u}, \bar{r}, 1, \bar{v})=\tau e. f=\tau e' . f'= (u' b' v' \cdot \bar{u}', \bar{r}', 1, \bar{v}')\\
& (u av \cdot \bar{u}, \bar{r}, 1, \bar{v})= \iota e. f=\iota e' . f'= (u a' v' \cdot \bar{u}', \bar{r}', 1, \bar{v}' )\\
& (u, r, 1, v \cdot \bar{u} \bar{b} \bar{v})= e . \tau f=e'.\tau f'=(u', r', 1, v' \cdot \bar{u}' \bar{b}' \bar{v'}).
\end{align*}
From the first displayed equation we have that $u=u'$ and $(a,b)=r=r'=(a',b')$. It follows from this and the second displayed equation that $v\bar{u}=v'\bar{u'}$. If $v'$ is a prefix of $v$, then there is some $w$ such that $v'w=v$. In this case we have that
$$\iota e=u a v =uav'w =u'a' v' w=\iota e' w$$
and
$$\tau e=ubv=ubv'w=u'b'v'w=\tau e' w,$$
which shows that $e=e'.w$. From the third displayed equation we have that $(\bar{a}, \bar{b})=\bar{r}=\bar{r}'=(\bar{a}',\bar{b}')$ and that $\bar{v}=\bar{v}'$. This and the fourth displayed equation imply that $v \bar{u}=v'\bar{u}'$, hence $v'w \bar{u}=v'\bar{u}'$ and $w \bar{u}=\bar{u'}$. It follows now that 
$$\iota f'=\bar{u}'\bar{a}' \bar{v'}=w \bar{u}\bar{a}\bar{v}=w . \iota f$$
and
$$\tau f'= \bar{u}'\bar{b}' \bar{v}'=w \bar{u} \bar{b}\bar{v}=w . \tau f,$$
which prove that $f'=w. f$. 

In the case when $v$ is a prefix of $v'$ it follows in a similar way as above that there is some $w$ such that $e'=e.w$ and $f=w.f'$.
\end{proof}

\begin{corollary} \label{cunamb}
For every two positive edges $e,f$, the square 2-cell $[e,f]$ is uniquely determined by the pair of edges $(e.\iota f, \iota e. f)$.
\end{corollary}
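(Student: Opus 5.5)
We must show: if $[e,f]$ and $[e',f']$ are square 2-cells with $e.\iota f = e'.\iota f'$ and $\iota e. f = \iota e'. f'$, then $[e,f]=[e',f']$. By Lemma \ref{unamb}, it suffices to produce $w\in F$ with either $e=e'.w$ and $f'=w.f$, or $e'=e.w$ and $f=w.f'$. The plan is to reuse the computation in the converse part of the proof of Lemma \ref{unamb}. That argument in fact only used the first and third of the four displayed equations, the second and fourth serving merely as consistency checks.

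Write $e=(u,r,1,v)$, $e'=(u',r',1,v')$, $f=(\bar u,\bar r,1,\bar v)$ and $f'=(\bar u',\bar r',1,\bar v')$. The equality $e.\iota f=e'.\iota f'$ reads
$$(u,r,1,v\bar u\bar a\bar v)=(u',r',1,v'\bar u'\bar a'\bar v').$$
From it we get $u=u'$, $r=r'$, and $v\bar u\bar a\bar v=v'\bar u'\bar a'\bar v'$. The equality $\iota e.f=\iota e'.f'$ reads
$$(uav\bar u,\bar r,1,\bar v)=(u'a'v'\bar u',\bar r',1,\bar v').$$
From it we get $\bar r=\bar r'$, $\bar v=\bar v'$, and $uav\bar u=u'a'v'\bar u'$. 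Since $u=u'$ and $a=a'$, cancellation in the free monoid $F$ gives $v\bar u=v'\bar u'$.

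Two words with a common extension $v\bar u=v'\bar u'$ are prefix-comparable, so either $v'$ is a prefix of $v$ or $v$ is a prefix of $v'$. In the first case write $v=v'w$. Then $e=e'.w$, because the two edges have the same $u$ and $r$ and right word $v'w$. Cancelling $v'$ in $v'w\bar u=v'\bar u'$ gives $\bar u'=w\bar u$. Together with $\bar r=\bar r'$ and $\bar v=\bar v'$, this gives $f'=w.f$. The second case is symmetric and yields $e'=e.w$ and $f=w.f'$. In either case Lemma \ref{unamb} gives $[e,f]=[e',f']$.

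The only delicate step is the prefix-comparability argument, which is where the second coordinate equation is essential. The first coordinate equation alone only determines the concatenation $v\bar u\bar a\bar v$ and does not separate $v$ from $\bar u$; it is the equality $v\bar u=v'\bar u'$ from $\iota e.f$ that makes the splitting possible. Everything else is routine cancellation in $F$.
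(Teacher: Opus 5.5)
Your proof is correct and is essentially the paper's own argument: comparing the two edge equalities componentwise gives $u=u'$, $r=r'$, $\bar r=\bar r'$, $\bar v=\bar v'$ and $v\bar u=v'\bar u'$; prefix comparability then gives $e=e'.w$ and $f'=w.f$ (or the symmetric case), and Lemma \ref{unamb} finishes. One minor point: the paper's proof of Lemma \ref{unamb} actually gets $v\bar u=v'\bar u'$ from the second displayed equation, not the third, so your aside about that proof is slightly off, but your own derivation from $\iota e.f=\iota e'.f'$ by cancelling $ua$ is valid and is exactly what the corollary needs.
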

\begin{proof}
If there is some 2-cell $[e',f']$ such that $e'.\iota f' =e. \iota f$, $\iota e' . f'=\iota e. f$, then as in Lemma \ref{unamb}, letting $e=(u, r,1, v)$, $e'=(u', r',1, v')$, $f=(\bar{u}, \bar{r}, 1, \bar{v})$  and $f'=(\bar{u}', \bar{r}', 1, \bar{v}')$ with $r=(a,b)$, $r'=(a',b')$, $\bar{r}=(\bar{a}, \bar{b})$ and $\bar{r}'=(\bar{a}',\bar{b}')$, we see that
$$e'.\iota f' =(u', r', 1, v'.\bar{u}' \bar{a}'\bar{v}')= e. \iota f= (u, r, 1, v. \bar{u}\bar{a}\bar{v})$$
and
$$\iota e' . f'=(u'a'v'.\bar{u}', \bar{r}', 1, \bar{v}')= \iota e. f=(uav.\bar{u}, \bar{r}, 1, \bar{v}),$$
which imply that $u=u'$, $r=r'$, $\bar{r}=\bar{r}'$, $\bar{v}=\bar{v}'$ and $v' \bar{u}'=v \bar{u}$.  If $v'$ is a prefix of $v$, then for some $w$ we have that $v=v' w$ and $w \bar{u}=\bar{u}'$. It follows in this case that
$$e=(u, r, 1, v)=(u', r', 1, v'w)=e'. w,$$
and
$$f'=(\bar{u}', \bar{r}', 1, \bar{v}')=(w \bar{u}, \bar{r}, 1, \bar{v})=w. f$$
Now Lemma \ref{unamb} implies that $[e,f]=[e',f']$. The case when $v$ is a prefix of $v'$ is analogous to the above.
\end{proof}
\begin{notation} \label{ns}
For every two positive edges $e,f$, the square 2-cell $[e,f]$ is represented in \cite{SES} in the new form $[\iota e.\iota f, (e.\iota f, \iota e. f)]$. Corollary above implies that this notation is unambiguous.
\end{notation}

\begin{lemma} \label{bmsq}
Let $e,e'$ be left reduced positive edges and $f,f'$ be right reduced positive edges. Then for every $\alpha, \beta, \alpha', \beta' \in F$, $\alpha.[e,f]. \beta=\alpha' . [e',f']. \beta'$ if and only if $\alpha=\alpha'$, $\beta=\beta'$, $[e,f]=[e',f']$.
\end{lemma}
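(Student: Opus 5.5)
The plan is to reduce everything to the explicit coordinates of the edges, as in the proofs of Lemma \ref{unamb} and Corollary \ref{cunamb}. The implication from right to left is immediate: if $\alpha=\alpha'$, $\beta=\beta'$ and $[e,f]=[e',f']$, then the translated cells have the same combinatorial boundary. This is because translation acts edgewise on the boundary loop via (\ref{gtp}). So the whole content is the converse.

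For the converse, I first note that $\alpha.[e,f].\beta=[\alpha.e, f.\beta]$. Indeed, translating $\delta[e,f]$ by $(\alpha,\beta)$ gives exactly $\delta[\alpha.e,f.\beta]$, since $\iota(\alpha.e)=\alpha\,\iota e$ and $\tau(f.\beta)=\tau f\,\beta$. Hence the hypothesis says $[\alpha.e,f.\beta]=[\alpha'.e',f'.\beta']$.

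By Corollary \ref{cunamb}, or directly from the first and third boundary edges, we obtain
$$(\alpha.e).\iota(f.\beta)=(\alpha'.e').\iota(f'.\beta'),\qquad \iota(\alpha.e).(f.\beta)=\iota(\alpha'.e').(f'.\beta').$$
Write $e=(u,r,1,v)$, $e'=(u',r',1,v')$, $f=(\bar u,\bar r,1,\bar v)$ and $f'=(\bar u',\bar r',1,\bar v')$. Comparing the first components of the first equation gives $\alpha u=\alpha' u'$ and $r=r'$. Comparing the last components of the second equation gives $\bar v\beta=\bar v'\beta'$ and $\bar r=\bar r'$.

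Next I use the reducedness hypotheses. The edge $e$ is left reduced, which means precisely $u=\lambda$; otherwise $e=u.(\lambda,r,1,v)$ with $u$ non-empty. Likewise $u'=\lambda$, and right reducedness of $f,f'$ gives $\bar v=\bar v'=\lambda$. Hence $\alpha=\alpha'$ and $\beta=\beta'$. This step is where I expect the only subtlety: one has to check that "left reduced" is exactly equivalent to the first coordinate being empty. That follows from the definition of translation, $z.(w,r,1,w')=(zw,r,1,w')$, since a decomposition $e=u.g$ with $u\neq\lambda$ forces the first coordinate of $e$ to have non-empty prefix $u$, and conversely.

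Finally, since $\alpha=\alpha'$ and $\beta=\beta'$, the equality $\alpha.[e,f].\beta=\alpha.[e',f'].\beta$ must be cancelled to give $[e,f]=[e',f']$. At the level of boundaries, each boundary edge of $\alpha.[e,f].\beta$ has first coordinate beginning with $\alpha$ and last coordinate ending with $\beta$. Equality of the translated loops therefore gives equality of the loops with these prefixes and suffixes removed, because cancellation holds in the free monoid $F$. So $\delta[e,f]=\delta[e',f']$, and hence $[e,f]=[e',f']$, completing the proof.
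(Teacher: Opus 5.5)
Your proof is correct, but it finishes by a different route from the paper.

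The paper's argument runs as follows.
\begin{itemize}
\item It applies the characterization in Lemma \ref{unamb} to $[\alpha.e,f.\beta]=[\alpha'.e',f'.\beta']$. This gives a shift word $w$ with $\alpha.e=\alpha'.e'.w$ and $w.f.\beta=f'.\beta'$, or the dual pair of equalities.
\item Comparing coordinates, it reads off $\alpha=\alpha'$, $\beta=\beta'$, $e=e'.w$ and $w.f=f'$.
\item It then invokes Lemma \ref{unamb} a second time to conclude $[e,f]=[e',f']$.
\end{itemize}

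You never need the shift word.
\begin{itemize}
\item You get $\alpha=\alpha'$ and $\beta=\beta'$ directly from the outer coordinates of the two edges $(\alpha.e).\iota(f.\beta)$ and $\iota(\alpha.e).(f.\beta)$, together with reducedness.
\item You then strip the common translation from the boundary loop edge by edge, using left and right cancellation in $F$. This gives $\delta[e,f]=\delta[e',f']$.
\end{itemize}
This uses only the fact that a square cell is determined by its boundary loop, so it avoids the less obvious direction of Lemma \ref{unamb} (equal cells imply the existence of $w$). The paper's route, in exchange, also exhibits the explicit relation $e=e'.w$, $f'=w.f$ between the two representing pairs.

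One small point of citation. The two edge equalities do not come from Corollary \ref{cunamb}, which says the pair determines the cell; you need the converse. They follow directly from equality of the boundary loops, via the first and last edges of $\delta[\alpha.e,f.\beta]$, which is the alternative justification you already give.
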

\begin{proof}
Let $e=(\lambda,r,1, v)$, $e'=(\lambda,r',1, v')$, $f=(\bar{u}, \bar{r}, 1, \lambda)$ and $f'=(\bar{u}', \bar{r}', 1, \lambda)$. From the assumption we have the equality of the 2-cells $[\alpha.e, f.\beta]=[\alpha'.e', f'.\beta']$, and then from Lemma \ref{unamb}, there is some $w \in F$ such that either $\alpha. e=\alpha'. e'. w$ and $w.f.\beta=f'. \beta'$, or $\alpha. e. w=\alpha'. e'$ and $f.\beta=w.f'. \beta'$. If any of the above holds true, then the lemma follows. We give here the proof when the first equality holds true since the other case is dual to the first. From the first given equality we have
$$(\alpha, r, 1, v)=\alpha. e=\alpha'. e'. w=(\alpha', r', 1, v'w),$$
which implies that $\alpha=\alpha'$, $r=r'$ and $v=v'w$.  Therefore
$$e=(\lambda,r,1, v)=(\lambda,r', 1, v'w)=e'.w.$$
From the second given equality we have
$$(w. \bar{u}, \bar{r}, 1, \beta)=w.f.\beta=f'. \beta'=(\bar{u}', \bar{r}',1, \beta'),$$
from which we get that $\beta=\beta'$, $\bar{r}=\bar{r}'$ and $w \bar{u}=\bar{u}'$. Therefore
$$w.f=(w. \bar{u}, \bar{r}, 1, \lambda)=(\bar{u}', \bar{r}',1, \lambda)=f'.$$
Now Lemma \ref{unamb} implies that $[e,f]=[e',f']$. 
\end{proof}
\begin{corollary}
Every 2-cell $[g,h]$ can be written uniquely as $[g,h]=u. [e, f].v$ where $e$ is a left reduced positive edge and $f$ a right reduced positive edge.
\end{corollary}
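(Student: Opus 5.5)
The plan is to prove existence and uniqueness separately. Uniqueness is essentially Lemma \ref{bmsq}; existence needs a decomposition of arbitrary positive edges into a left translation of a left reduced edge or a right translation of a right reduced edge. Throughout we write the 2-cell $[g,h]$ in the form $u.[e,f].v=[u.e,f.v]$. We use the convention, implicit in Lemma \ref{bmsq}, that $u.[e,f].v$ denotes the square $[u.e,\,f.v]$. This has boundary $u.\delta[e,f].v$ by (\ref{gtp}).

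\emph{Existence.} Let $g=(u,r,1,v')$ and $h=(\bar u,\bar r,1,v)$. A positive edge is left reduced exactly when its left word is $\lambda$. Indeed, $(w,r,1,w')=w.(\lambda,r,1,w')$, and conversely $u.f$ with $u\neq\lambda$ has nonempty left word. Dually, an edge is right reduced exactly when its right word is $\lambda$. So we set
$$e=(\lambda,r,1,v'),\qquad f=(\bar u,\bar r,1,\lambda),$$
which gives $g=u.e$ and $h=f.v$. Hence $[g,h]=[u.e,f.v]=u.[e,f].v$, with $e$ left reduced and $f$ right reduced.

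\emph{Uniqueness.} Suppose $u.[e,f].v=u'.[e',f'].v'$ with $e,e'$ left reduced and $f,f'$ right reduced. Lemma \ref{bmsq} then gives $u=u'$, $v=v'$ and $[e,f]=[e',f']$. It remains to upgrade the last equality to $e=e'$ and $f=f'$. By Lemma \ref{unamb}, there is $w\in F$ with either $e=e'.w$ and $f'=w.f$, or $e'=e.w$ and $f=w.f'$. In the first case, $f'=w.f$ with $f'$ right reduced does not by itself force $w=\lambda$, since right reducedness constrains only the right word. We therefore compare right words of the edges, following the computation in the proof of Lemma \ref{bmsq}. Writing $e=(\lambda,r,1,t)$ and $e'=(\lambda,r',1,t')$, the relation $e=e'.w$ gives $t=t'w$.

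This is the step I expect to be the main obstacle. With the data available, $w$ need not be empty. For example, take $e=(\lambda,r,1,w)$, $f=(\bar u,\bar r,1,\lambda)$, $e'=(\lambda,r,1,\lambda)$ and $f'=(w\bar u,\bar r,1,\lambda)$. Then $e,e'$ are left reduced, $f,f'$ are right reduced, and $[e,f]=[e',f']$ by Lemma \ref{unamb}. So uniqueness must be read as uniqueness of the translating words $u,v$ and of the square $[e,f]$, which is exactly what Lemma \ref{bmsq} delivers. If literal uniqueness of the pair $(e,f)$ is wanted, I would normalize by an extra convention: choose $w$ so that the right word of $e$ is $\lambda$, absorbing it into $f$ via $[e.w,f]=[e,w.f]$ from Lemma \ref{unamb}. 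With that normalization, $e=(\lambda,r,1,\lambda)$ is forced, $f$ is then determined, and Lemma \ref{unamb} applied as above gives $w=\lambda$. I would state the corollary's uniqueness in the sense guaranteed by Lemma \ref{bmsq}, namely of $u$, $v$ and the cell $[e,f]$, and remark on this normalization.
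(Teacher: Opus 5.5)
Your proposal is correct and is essentially the paper's argument. Existence comes from stripping the left word of $g$ and the right word of $h$ to get $g=u.e$, $h=f.v$, and uniqueness is read off from Lemma \ref{bmsq} as uniqueness of $u$, $v$ and the 2-cell $[e,f]$; your counterexample showing the pair $(e,f)$ itself is not unique is valid and confirms that this weaker reading is exactly what the paper's proof establishes.
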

\begin{proof}
Indeed, there is some $u \in F$ such that $g=u.e$ with $e$ left reduced, and similarly there is some $v\in F$ such that $h=f.v$ and $f$ is right reduced, hence 
$$[g,h]=[u.e, f.v]=u.[e,f].v.$$
Lemma \ref{bmsq} implies that the above representation of $[g,h]$ is unique.
\end{proof}
The following will be useful in the last section.
\begin{lemma} \label{uf}
Let $e_{1},e_{2},e_{1}',e_{2}'$ be left and right reduced edges and $u_{1},v_{1},u_{2},v_{2}, u'_{1},v'_{1}, u'_{2},v'_{2}\in F$ such that $[u_{1}.e_{1}.v_{1}, u_{2}.e_{2}.v_{2}]=[u'_{1}.e'_{1}.v'_{1}, u'_{2}.e'_{2}.v'_{2}]$, then $u_{1}=u'_{1}$, $v_{2}=v'_{2}$, $e_{1}=e'_{1}$, $e_{2}=e'_{2}$ and $v_{1}u_{2}=v'_{1}u'_{2}$.
\end{lemma}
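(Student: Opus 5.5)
Since $e_{1}$ is left reduced, $u_{1}.e_{1}$ is already in the left-reduced form required by the Corollary after Lemma \ref{bmsq}. The idea is to rewrite both sides in that unique normal form and compare. First, by the obvious associativity of the bi-action on edges, $u_{1}.e_{1}.v_{1} = u_{1}.(e_{1}.v_{1})$. Then Lemma \ref{unamb} (the identity $[e.w,f]=[e,w.f]$) lets us move $v_{1}$ across, so that
$$[u_{1}.e_{1}.v_{1},\, u_{2}.e_{2}.v_{2}] = [u_{1}.e_{1},\, v_{1}u_{2}.e_{2}.v_{2}].$$
Next we use that the bi-action on square cells satisfies $\alpha.[e,f].\beta=[\alpha.e,f.\beta]$, as in the proof of Lemma \ref{bmsq}. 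This turns the right-hand side into $u_{1}.[e_{1}, v_{1}u_{2}.e_{2}].v_{2}$. Here $e_{1}$ is left reduced by hypothesis. The edge $v_{1}u_{2}.e_{2}$ is right reduced, because $e_{2}$ is right reduced and left translation does not change the right end of an edge. Doing the same on the primed side gives $u'_{1}.[e'_{1}, v'_{1}u'_{2}.e'_{2}].v'_{2}$.

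By Lemma \ref{bmsq}, the two representations are equal only if $u_{1}=u'_{1}$, $v_{2}=v'_{2}$ and $[e_{1}, v_{1}u_{2}.e_{2}]=[e'_{1}, v'_{1}u'_{2}.e'_{2}]$. We then apply Lemma \ref{unamb} to this last equality. It gives some $w\in F$ with either
$$e_{1}=e'_{1}.w \ \text{ and }\ v'_{1}u'_{2}.e'_{2}=w.v_{1}u_{2}.e_{2},$$
or the symmetric pair of equalities.

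The key step is to use that $e_{1}$ and $e'_{1}$ are also right reduced. Write $e_{1}=(\lambda,r,1,\lambda)$ and $e'_{1}=(\lambda,r',1,\lambda)$. Then $e_{1}=e'_{1}.w$ reads $(\lambda,r,1,\lambda)=(\lambda,r',1,w)$, so $w=\lambda$ and $r=r'$, hence $e_{1}=e'_{1}$. With $w=\lambda$ the second equality becomes $v'_{1}u'_{2}.e'_{2}=v_{1}u_{2}.e_{2}$. In quadruple form, with $e_{2}=(\lambda,\bar r,1,\lambda)$ and $e'_{2}=(\lambda,\bar r',1,\lambda)$, this is
$$(v'_{1}u'_{2},\bar r',1,\lambda)=(v_{1}u_{2},\bar r,1,\lambda),$$
which gives $\bar r=\bar r'$ and $v_{1}u_{2}=v'_{1}u'_{2}$. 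The symmetric case is handled identically.

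The one point needing care is the reading of the hypothesis "left and right reduced". It must mean that each edge is of the form $(\lambda,r,1,\lambda)$, so both reducedness properties are used, in different steps. Left reducedness of $e_{1}$ and right reducedness of $e_{2}$ feed Lemma \ref{bmsq}. Right reducedness of $e_{1}$ forces $w=\lambda$. Left reducedness of $e_{2}$ lets us read off $v_{1}u_{2}=v'_{1}u'_{2}$.
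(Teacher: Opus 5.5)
Your proof is correct, but it takes a more roundabout route than the paper's.

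The paper never passes through Lemma \ref{bmsq}. It equates the boundary edges of the two cells directly, citing Corollary \ref{cunamb}. This gives $u_{1}.e_{1}.v_{1}\,\iota(u_{2}.e_{2}.v_{2})=u'_{1}.e'_{1}.v'_{1}\,\iota(u'_{2}.e'_{2}.v'_{2})$ and $\iota(u_{1}.e_{1}.v_{1})\,u_{2}.e_{2}.v_{2}=\iota(u'_{1}.e'_{1}.v'_{1})\,u'_{2}.e'_{2}.v'_{2}$. Reading off the components of these two quadruples yields $u_{1}=u'_{1}$, $e_{1}=e'_{1}$, $e_{2}=e'_{2}$ and $v_{2}=v'_{2}$ at once. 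The identity $v_{1}u_{2}=v'_{1}u'_{2}$ then follows by cancelling in $v_{1}u_{2}\iota(e_{2})v_{2}=v'_{1}u'_{2}\iota(e'_{2})v'_{2}$. That is a single computation with no case split.

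You instead slide $v_{1}$ across using Lemma \ref{unamb} and rewrite each cell in the normal form $u_{1}.[e_{1},v_{1}u_{2}.e_{2}].v_{2}$. You then invoke the uniqueness in Lemma \ref{bmsq}, and apply Lemma \ref{unamb} a second time, where right reducedness of $e_{1},e'_{1}$ forces the sliding word $w$ to be $\lambda$. This costs an extra case split and the check that $v_{1}u_{2}.e_{2}$ is right reduced, which you carry out correctly.

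In return, your route shows exactly which reducedness hypothesis is used at each step. It also shows that the lemma is just uniqueness of the normal form plus the observation that a doubly reduced edge admits no sliding.
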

\begin{proof}
From Corollary \ref{cunamb} we have that 
$$u_{1}.e_{1}.v_{1} \iota(u_{2}.e_{2}.v_{2})=u'_{1}.e'_{1}.v'_{1} \iota(u'_{2}.e'_{2}.v'_{2})$$
and
$$\iota(u_{1}.e_{1}.v_{1})u_{2}.e_{2}.v_{2}= \iota(u'_{1}.e'_{1}.v'_{1})u'_{2}.e'_{2}.v'_{2}.$$
The assumption on edges implies that 
$$u_{1}=u'_{1}, v_{2}=v'_{2}, e_{1}=e'_{1}, e_{2}=e'_{2},$$
and 
$$v_{1}u_{2} \iota(e_{2})v_{2}=v'_{1}u'_{2}\iota(e'_{2})v'_{2}.$$
After cancellations we obtain the remaining equality $v_{1}u_{2}=v'_{1}u'_{2}$.
\end{proof}

The attachment of all 2-cells in one gulp can be done topologically as follows. For each $s \in \mathfrak{S}$ we let $D_{s}^{2}$ identified with $I \times I$ be a copy of the unit disk $D^{2}$ labeled by $s$, and let $\partial D_{s}^{2}$ be its boundary denoted as the unit circle $S_{s}^{1}$ labeled by $s$. For each $s=\partial [e,f] \in \mathfrak{S}$ we define the attaching map
$$\phi_{s}: S_{s}^{1} \rightarrow \Gamma(\cP)$$
of $[e,f]$ as follows. Consider edges
$$
e.\iota f, \tau e. f, \iota e. f, e. \tau f.
$$
All these edges do not depend on the choice of the pair $(e,f)$ with $\partial [e,f]=s$. This is an easy implication of Lemma \ref{unamb}. The four segments of $\partial(I \times I)$ are labeled as depicted in the figure below

\begin{equation}\label{amsq}
\begin{tikzpicture}[x=4cm, y=4cm, thick]  
\def\a{0.3333}  
\def\b{0.6667}   

\draw (0,0) -- (1,0) node[pos=0.5, below, font=\small] {$e.\iota f$};   
\node[above=5pt, font=\small] at (0.5,0) {$L_1$};

\draw (0,0) -- (0,1) node[pos=0.5, left, font=\small] {$(\iota e. f)^{-1}$};   
\node[right=5pt, font=\small] at (0,0.5) {$L_3$};

\draw (0,1) -- (1,1) node[pos=0.5, above, font=\small] {$(e. \tau f)^{-1}$};  

\node[below=5pt, font=\small] at (0.5,1) {$L_4$};
\draw (\b,1) -- (1,1) node[pos=0.5, above, font=\small] {};   

\draw (1,1) -- (1,0) node[pos=0.5, right, font=\small] {$\tau e. f$};  

\node[left=5pt, font=\small] at (1,0.5) {$L_2$};

\filldraw (0,0) circle (1.5pt) node[below left] {$(0,0)$};  
\filldraw (1,0) circle (1.5pt) node[below right] {$(1,0)$};  
\filldraw (1,1) circle (1.5pt) node[above right] {$(1,1)$};  
\filldraw (0,1) circle (1.5pt) node[above left] {$(0,1)$};  
\end{tikzpicture}
\end{equation}

We define the following four homeomorphisms. 
$$h_{1}: L_{1} \rightarrow I \text{ by } ( t, 0) \mapsto t,$$ 
$$h_{2}: L_{2} \rightarrow I \text{ by } (1,t) \mapsto t,$$
$$h_{3}: L_{3} \rightarrow I \text{ by } (0, t) \mapsto 1-t,$$
$$h_{4}: L_{4} \rightarrow I \text{ by } (t,1) \mapsto 1-t.$$
Consider now the mappings
$$\Phi_{e.\iota f}\circ h_{1}, \Phi_{\tau e. f}\circ h_{2}, \Phi^{-1}_{e. \tau f}\circ h_{4}, \Phi^{-1}_{\iota e. f}\circ h_{3}.$$
Observe that
\begin{align*}
(\Phi_{e.\iota f}\circ h_{1})((1,0))&=\Phi_{e.\iota f}(1)\\
&= \tau e. \iota f\\
&=\Phi_{\tau e. f}(0)\\
&=(\Phi_{\tau e. f}\circ h_{2})((1,0)),
\end{align*}
therefore from the gluing lemma (the finite closed cover version) we can glue together $\Phi_{e.\iota f}\circ h_{1}$ with $\Phi_{\tau e. f}\circ h_{2}$ to form a continuous mapping
$$\phi_{s,1}: L_{1} \cup L_{2} \rightarrow \mathcal{D}(\cP)$$
such that
$$\phi_{s,1}|_{L_{1}}=\Phi_{e.\iota f}\circ h_{1} \text{ and } \phi_{s,1}|_{L_{2}}=\Phi_{\tau e. f}\circ h_{2},$$
therefore 
$$\text{Im}\phi_{s,1}=e.\iota (f) \cup \tau(e).f$$
Also we observe that
\begin{align*}
(\Phi^{-1}_{e.\tau f}\circ h_{4})((0,1))&=\Phi^{-1}_{e.\tau f}(1)\\
&= \Phi_{e.\tau f}(0)\\
&= \iota e. \tau f\\
&=\Phi_{\iota e. f}(1)\\
&=(\Phi^{-1}_{\iota e. f}\circ h_{3})((0,1)),
\end{align*}
therefore we can glue as before $\Phi^{-1}_{e.\tau f}\circ h_{4}$ with $\Phi^{-1}_{\iota e. f}\circ h_{3}$ to form a continuous map
$$\phi_{s,2}: L_{3} \cup L_{4} \rightarrow \mathcal{D}(\cP)$$
such that
$$\phi_{s,2}|_{L_{4}}=\Phi^{-1}_{e.\tau f}\circ h_{4} \text{ and } \phi_{s,2}|_{L_{3}}=\Phi^{-1}_{\iota e. f}\circ h_{3},$$
therefore 
$$\text{Im}\phi_{s,2}=\iota (e).f \cup e.\tau(f).$$
We see that
\begin{align*}
\phi_{s,1}(0,0)&= (\Phi_{e.\iota f}\circ h_{1} )(0,0)\\
&= \Phi_{e. \iota f}(0)\\
&= \iota e. \iota f\\
&= \Phi_{\iota e. f}(0)\\
&=\Phi^{-1}_{\iota e. f}(1)\\
&= (\Phi^{-1}_{\iota e. f}\circ h_{3} )(0,0)\\
&= \phi_{s,2}(0,0),
\end{align*}
and that 
\begin{align*}
\phi_{s,1}(1,1)&= (\Phi_{\tau e. f}\circ h_{2} )(1,1)\\
&= \Phi_{\tau e. f}(1)\\
&= \tau e .\tau f\\
&= \Phi_{e.\tau f}(1)\\
&= \Phi^{-1}_{e.\tau f}(0)\\
&= (\Phi^{-1}_{e.\tau f}\circ h_{4} )(1,1)\\
&= \phi_{s,2}(1,1).
\end{align*}
From the gluing lemma again we can now form a continuous mapping $\phi_{s}: S^{1}_{s} \rightarrow \Gamma(\cP)$ such that 
\begin{align}
\phi_{s}|_{L_{1}}&=\Phi_{e.\iota f} \circ h_{1}, \phi_{s}|_{L_{2}}=\Phi_{\tau e. f} \circ h_{2}, \notag\\
\phi_{s}|_{L_{4}}&=\Phi^{-1}_{e.\tau f} \circ h_{4}, \phi_{s}|_{L_{3}}= \Phi^{-1}_{\iota e. f} \circ h_{3}, \label{rest}
\end{align}
with
$$\text{Im}(\phi_{s})=e.\iota(f) \cup \tau(e).f \cup \iota(e).f \cup e.\tau(f),$$
This is the attaching map of $[e,f]$ and $\text{Im}(\phi_{s})$ is the topological boundary of $[e,f]$. We remark that 
\begin{equation} \label{S1}
\phi_{s}(t,0)= \Phi_{e.\iota f}(t) \text{ and } \phi_{s}(1,t)= \Phi_{\tau e. f}(t)
\end{equation}
which explains the presence of $e.\iota f$ and $\tau e. f$ in the combinatorial boundary of $[e,f]$. Also we have that
\begin{equation} \label{S2}
\phi_{s}(0,t)=  \Phi^{-1}_{\iota e. f}(1-t) \text{ and } \phi_{s}(t,1)=\Phi^{-1}_{e.\tau f}(1-t)
\end{equation}
which explain the presence of $(\iota e. f)^{-1}$ and $(e. \tau f)^{-1}$ in the combinatorial boundary of $[e,f]$.

Now we let $\mathcal{D}(\cP)$ be the quotient space
$$\Gamma(\cP) \sqcup_{\phi_{s}} D^{2}_{s}= \left(\Gamma(\cP) \underset{s \in \mathfrak{S}}{\sqcup} D^{2}_{s}\right)/\sim$$
where $\sim$ is the equivalence relation generated by all pairs
\begin{equation} \label{rel2}
(u, \phi_{s}(u)) \text{ with } u \in S^{1}_{s} \text{ and } s \in \mathfrak{S}.
\end{equation}
Alternatively $\mathcal{D}(\mathcal{P})$ is given by the following push out diagram
\begin{equation*}
\xymatrix@R=30pt @C=60pt{ \underset{s}{\sqcup}S^{1}_{s} \ar[r]^{j} \ar[d]^{\phi} &  \underset{s}{\sqcup}D^{2}_{s} \ar[d]^{\nu_{\mathcal{D}} j_{2}}\\
\Gamma(\cP) \ar[r]^{\nu_{\mathcal{D}}j_{1}} & \Gamma(\cP) \sqcup_{\phi_{s}} D^{2}_{s}}
\end{equation*}
where 
\begin{align*}
& j && \text{is the inclusion of the boundary,}\\
&\phi=\underset{s}{\sqcup} \phi_{s} && \text{is the direct sum map,}\\
&j_{1}: \Gamma(\cP) \rightarrow \Gamma(\cP) \underset{s \in \mathfrak{S}}{\sqcup} D^{2}_{s} ,\\
& j_{2}: \underset{s \in \mathfrak{S}}{\sqcup} D^{2}_{s} \rightarrow \Gamma(\cP) \underset{s \in \mathfrak{S}}{\sqcup} D^{2}_{s} && \text{are the inclusion maps, and}\\
&\nu_{\mathcal{D}}: \Gamma(\cP) \underset{s \in \mathfrak{S}}{\sqcup} D^{2}_{s} \rightarrow \left(\Gamma(\cP) \underset{s \in \mathfrak{S}}{\sqcup} D^{2}_{s}\right)/\sim && \text{is the quotient map.}
\end{align*}
For every $s \in \mathfrak{S}$ we let
$$j_{s}: D^{2}_{s} \rightarrow \underset{s }{\sqcup} D^{2}_{s}$$
to be the inclusion map. Since the attaching map of a 2-cell $[e,f]$ with $\partial [e,f]=s$ is the mapping $\phi_{s}$, the characteristic map of $[e,f]$ will be the composite
$$\Phi_{s}=\nu_{\mathcal{D}} \circ  j_{2} \circ j_{s}.$$
Now we will describe in topological terms the bi action of $F$ on the 2-skeleton of $\mathcal{D}(\cP)$. In combinatorial terms this bi action is defined by setting for every $z,z' \in F$ and every square 2-cell $[e,f]$,
$$z.[e,f].z'=[z.e, f.z'].$$
Topologically this bi action can be described as follows. For the given pair of words $z,z'$ consider the continuous mapping $\gamma_{(z,z')}: \Gamma(\cP) \rightarrow \Gamma(\cP)$ of diagram (\ref{po1}) and then take the composite $\nu_{\mathcal{D}} \circ j_{1} \circ \gamma_{(z,z')}: \Gamma(\cP) \rightarrow \mathcal{D}(\cP)$ which is still continuous. Further, for every $s \in \mathfrak{S}$, we define
$$\delta_{s}: D^{2}_{s} \rightarrow \Gamma(\cP) \sqcup_{\phi_{s}} D^{2}_{s},$$
by
$$\delta_{s}(u)=\nu_{\mathcal{D}}(j_{2}(j_{z.s.z'}(u))),$$
where $u$ of the right hand side is considered a point in $D^{2}_{z.s.z'}$. Again this mapping is continuous and so is the extension
$$\delta=\underset{s}{\sqcup}\delta_{s}: \underset{s}{\sqcup} D^{2}_{s} \rightarrow \Gamma(\cP) \sqcup_{\phi_{s}} D^{2}_{s}$$
of all $\delta_{s}$. Now we check that $\delta \circ j=(\nu_{\mathcal{D}} \circ j_{1} \circ \gamma_{(z,z')}) \circ \phi$.
Let $u \in S^{1}_{s}$ for some particular $s \in \mathfrak{S}$, and let $[e,f]$ be the 2-cell such that $s =\partial [e,f]$. Assume that $u \in L_{1}$, then
\begin{align*}
((\nu_{\mathcal{D}} \circ j_{1} \circ \gamma_{(z,z')}) \circ \phi)(u)&= (\nu_{\mathcal{D}} \circ j_{1} \circ \gamma_{(z,z')})(\phi_{s}(u))\\
&= (\nu_{\mathcal{D}} \circ j_{1} \circ \gamma_{(z,z')})((\Phi_{e.\iota f} \circ h_{1})(u)) \tag{by (\ref{rest})}\\
&= \nu_{\mathcal{D}} (j_{1}((\gamma_{(z,z')} \circ \Phi_{e.\iota f})(h_{1}(u))))\\
&= \nu_{\mathcal{D}} (j_{1}(\Phi_{z.e.\iota f.z'}(h_{1}(u)))) \tag{by (\ref{eqp})}\\
&= \nu_{\mathcal{D}} (j_{1}((\Phi_{z.e.\iota f.z'} \circ h_{1})(u)))\\
&=\nu_{\mathcal{D}} (j_{1}( \phi_{z.s.z'}(u))) \tag{by (\ref{rest})}\\
&=\nu_{\mathcal{D}} (j_{1}( \phi_{z.s.z'}(j_{z.s.z'}(u)))) \tag{replace $u$ by $j_{z.s.z'}(u)$}\\
&= \nu_{\mathcal{D}} (j_{2}(j_{z.s.z'}(u))) \tag{by (\ref{rel2})}\\
&=(\delta \circ j)(u).
\end{align*}
There is a similar proof to the above when $u \in L_{2}$ and is omitted here. Now we check the case when $u \in L_{4}$ and leave the proof to the reader for its analogue case when $u \in L_{3}$. 
\begin{align*}
((\nu_{\mathcal{D}} \circ j_{1} \circ \gamma_{(z,z')}) \circ \phi)(u)&= (\nu_{\mathcal{D}} \circ j_{1} \circ \gamma_{(z,z')})(\phi_{s}(u))\\
&= (\nu_{\mathcal{D}} \circ j_{1} \circ \gamma_{(z,z')})((\Phi^{-1}_{e.\tau f} \circ h_{4})(u)) \tag{by (\ref{rest})}\\
&= \nu_{\mathcal{D}} (j_{1}((\gamma_{(z,z')} \circ \Phi^{-1}_{e.\tau f})(h_{4}(u))))\\
&= \nu_{\mathcal{D}} (j_{1}(\Phi^{-1}_{z.e.\tau f.z'}(h_{4}(u)))) \tag{by (\ref{eqp-1})}\\
&= \nu_{\mathcal{D}} (j_{1}((\Phi^{-1}_{z.e.\iota f.z'} \circ h_{4})(u)))\\
&=\nu_{\mathcal{D}} (j_{1}( \phi_{z.s.z'}(u))) \tag{by (\ref{rest})}\\
&=\nu_{\mathcal{D}} (j_{1}( \phi_{z.s.z'}(j_{z.s.z'}(u)))) \tag{replace $u$ by $j_{z.s.z'}(u)$}\\
&= \nu_{\mathcal{D}} (j_{2}(j_{z.s.z'}(u))) \tag{by (\ref{rel2})}\\
&=(\delta \circ j)(u).
\end{align*}
From the universal property of the push outs we have the existence of a mapping $\rho_{(z,z')}: \Gamma(\cP) \sqcup_{\phi_{s}} D^{2}_{s} \rightarrow \Gamma(\cP) \sqcup_{\phi_{s}} D^{2}_{s}$ such that the following diagram commutes
\begin{equation} \label{po2}
\xymatrix@R=1.5cm@C=2.5cm{ \underset{s}{\sqcup}S^{1}_{s}  \ar[r]^{j} \ar[d]^{\phi} &  \underset{s}{\sqcup}D^{2}_{s} \ar[d]^{\nu_{\mathcal{D}} j_{2}}
\ar@/^1pc/[rdd]^{\delta}   \\
\Gamma(\cP) \ar[r]^{\nu_{\mathcal{D}}j_{1}}  \ar[r]^{\nu_{\mathcal{D}}j_{1}} \ar@/_1pc/[rrd]_{\nu_{\mathcal{D}}j_{1}\gamma_{(z,z')}}   & \Gamma(\cP) \sqcup_{\phi_{s}} D^{2}_{s} \ar@{-->}[rd]|{\rho_{(z,z')}} \\
&          & \Gamma(\cP) \sqcup_{\phi_{s}} D^{2}_{s} }
\end{equation}
Now we see how the characteristic maps of $[e,f]$ and $z.[e,f].z'=[z.e, f.z']$ are related to each other. Assume that $\partial [e, f]=s$, hence $\partial [z.e,f.z']=z.s.z'$. It follows that for every $u \in D^{2}_{s}$,
\begin{align*}
(\rho_{(z,z')} \circ \Phi_{s})(u)&= (\rho_{(z,z')} \circ \nu_{\mathcal{D}} \circ j_{2}  \circ j_{s})(u)\\
&= \delta_{s}(u) \tag{from the commutativity of (\ref{po2})}\\
&= \nu_{\mathcal{D}}(j_{2}(j_{z.s.z'}(u)))\\
&= \Phi_{z.s.z'}(u),
\end{align*}
hence we have that
\begin{equation} \label{relch}
\rho_{(z,z')} \circ \Phi_{s}=\Phi_{z.s.z'}.
\end{equation}
Since for every 2-cell $[e,f]$ with $\delta[e,f]=s$ and for every $z,z' \in F$ we have that $\delta[z.e,f.z']=z. \delta[e,f].z'$, we can rewrite (\ref{relch}) in the form
$$\rho_{(z,z')} \circ \Phi_{\delta[e,f]}=\Phi_{\delta [z.e,f.z']},$$
which explains why the bi action on 2-cells gives 2-cells by the rule $z.[e,f].z'=[z.e, f.z']$. The following lemma will be used in the next section.
\begin{lemma} \label{ronu}
For every positive edge $e$ and every $z,z' \in F$ and every $\varepsilon=\pm 1$, $\rho_{(z,z')} \circ \nu_{\mathcal{D}}\circ j_{1} \circ \Phi^{\varepsilon}_{e}=\nu_{\mathcal{D}}\circ j_{1} \circ \Phi^{\varepsilon}_{z.e.z'}$.
\end{lemma}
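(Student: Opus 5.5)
The identity follows by composing the commutativity of the lower triangle of diagram (\ref{po2}) with the path identities (\ref{eqp}) and (\ref{eqp-1}) from the previous section. The lower triangle of (\ref{po2}) says that
$$\rho_{(z,z')} \circ \nu_{\mathcal{D}} \circ j_{1} = \nu_{\mathcal{D}} \circ j_{1} \circ \gamma_{(z,z')}$$
as maps $\Gamma(\cP) \rightarrow \mathcal{D}(\cP)$. This is exactly how $\rho_{(z,z')}$ was produced from the universal property of the pushout: the map $\nu_{\mathcal{D}} j_{1}\gamma_{(z,z')}$ was the bottom leg of the cocone.

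Now fix a positive edge $e$ and let $\kappa$ be its label, so that $\Phi_{e}=\Phi_{\kappa}$ and $\Phi_{z.e.z'}=\Phi_{z.\kappa.z'}$. Precomposing the displayed identity with $\Phi^{\varepsilon}_{e}$ gives
$$\rho_{(z,z')} \circ \nu_{\mathcal{D}}\circ j_{1} \circ \Phi^{\varepsilon}_{e} = \nu_{\mathcal{D}} \circ j_{1} \circ \gamma_{(z,z')} \circ \Phi^{\varepsilon}_{\kappa}.$$
The two values of $\varepsilon$ are handled separately. For $\varepsilon=1$, equation (\ref{eqp}) gives $\gamma_{(z,z')} \circ \Phi_{\kappa}=\Phi_{z.\kappa.z'}$. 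For $\varepsilon=-1$, equation (\ref{eqp-1}) gives the corresponding statement for inverses. Alternatively, one can derive it directly from (\ref{defEk-1}):
$$\gamma_{(z,z')}(\Phi^{-1}_{\kappa}(t))=\gamma_{(z,z')}(\Phi_{\kappa}(1-t))=\Phi_{z.\kappa.z'}(1-t)=\Phi^{-1}_{z.\kappa.z'}(t).$$
Substituting either case into the right-hand side above yields $\nu_{\mathcal{D}}\circ j_{1}\circ\Phi^{\varepsilon}_{z.e.z'}$, which is the claim.

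The only point needing care is the bookkeeping of the identification between a positive edge $e=(w,r,1,w')$ and the label $\kappa=(w,r,w')$. Specifically, one must check that the combinatorial translate $z.e.z'=(zw,r,1,w'z')$ corresponds to the label $z.\kappa.z'$, so that $\Phi_{z.e.z'}=\Phi_{z.\kappa.z'}$. This holds by the definitions of both actions. Apart from this check, the proof is a two-line composition of commuting diagrams, and no real obstacle is expected.
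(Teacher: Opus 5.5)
Your proposal is correct and matches the paper's proof: the paper evaluates both sides at a point $t \in I$, applies the lower triangle of (\ref{po2}) to replace $\rho_{(z,z')} \circ \nu_{\mathcal{D}} \circ j_{1}$ by $\nu_{\mathcal{D}} \circ j_{1} \circ \gamma_{(z,z')}$, and then invokes (\ref{eqp}) and (\ref{eqp-1}). This is the same composition you describe. Your extra check for $\varepsilon=-1$ via (\ref{defEk-1}) and your remark on the edge--label identification are harmless additions.
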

\begin{proof}
For every $t \in I$ we have that 
\begin{align*}
(\rho_{(z,z')} \circ \nu_{\mathcal{D}}\circ j_{1} \circ \Phi^{\varepsilon}_{e})(t)&=(\rho_{(z,z')} \circ \nu_{\mathcal{D}}\circ j_{1}) (\Phi^{\varepsilon}_{e} (t))\\
&=(\nu_{\mathcal{D}}\circ j_{1} \circ \gamma_{(z,z')})(\Phi^{\varepsilon}_{e} (t)) \tag{by (\ref{po2})}\\
&=(\nu_{\mathcal{D}}\circ j_{1})( ( \gamma_{(z,z')} \circ \Phi^{\varepsilon}_{e} )(t))\\
&=(\nu_{\mathcal{D}}\circ j_{1})(\Phi^{\varepsilon}_{z.e.z'} (t) ) \tag{by (\ref{eqp}) and (\ref{eqp-1})}\\
&=(\nu_{\mathcal{D}}\circ j_{1} \circ \Phi^{\varepsilon}_{z.e.z'})(t).
\end{align*}
\end{proof}
We conclude this section by remarking that the commutativity of the lower triangle of (\ref{po2}) shows that the previously defined bi action of $F$ on $\Gamma(\cP)$ is preserved after attaching square two cells.

\section{Adding extra 2-cells to $\mathcal{D}(\cP)$-the general case} \label{gc}

We assume in this section that $\mathbf{p}$ is a set of closed paths in $\Gamma(\cP)$ none of which is the combinatorial boundary of a square 2-cell. We can attach extra 2-cells to $\mathcal{D}(\cP)$ in the following way. For every $w,w' \in F$ and every $p \in \mathbf{p}$ a 2-cell $[w,p,w']$ is attached along the closed path
$$\delta [w,p,w']=w.p.w',$$
which we call here the combinatorial boundary of $[w,p,w']$. We emphasise that the set of the new 2-cells is in a 1-1 correspondence with triples $[w,p,w']$ from $F\times \mathbf{p} \times F$. We have in particular that when $w=w'=\lambda$, $[\lambda, p,\lambda]$ is the 2-cell attached along the closed path $p$. In order to shorten the notations, the extended 2-complex will be written in what follows as $\mathcal{D}(\mathcal{P}) \sqcup F.\mathbf{p}.F$ instead of $\mathcal{D}(\mathcal{P}) \sqcup F \times \mathbf{p} \times F$. The bi action of $F$ on the cells of $\mathcal{D}(\cP)$ now extends to the 2-cells of $\mathcal{D}(\mathcal{P}) \sqcup F.\mathbf{p}.F$ by setting for each new 2-cell $[w,p,w']$ and each $v,v'\in F$
$$v.[w,p,w'].v'=[vw,p,w'v'].$$
In particular we have that 
\begin{equation} \label{unqexp}
[w,p,w'] =w.[\lambda, p, \lambda].w'
\end{equation}
and that this expression is unique. Sometimes, in order to abreviate the notations, we write the 2-cell $[\lambda, p, \lambda]$ simply as $p$, and $[w,p,w']$ as $w.p.w'$.

We will define the extension $\mathcal{D}(\mathcal{P}) \sqcup F.\mathbf{p}.F$ of $\mathcal{D}(\cP)$ topologically in terms of the attaching maps of 2-cells in $F.\mathbf{p}.F$. Also we will reveal a relationship between the characteristic map of a 2-cell and the characteristic map of a translate of that cell. Let 
$$\delta [w,p,w']=e_{1}^{\varepsilon_{1}} \circ \dots \circ e_{n}^{\varepsilon_{n}}$$
be a closed path at $w=\iota e_{1}^{\varepsilon_{1}}$. We distinguish between three possible cases for the length $n$ of $\delta [w,p,w']$.\\
(i) $n=2$, hence 
$$\delta [w,p,w']=e_{1}^{\varepsilon_{1}} \circ e_{2}^{\varepsilon_{2}}.$$
The boundary $S^{1}_{[w,p,w']}$ of the attaching disk $D^{2}_{[w,p,w']}$ is labeled as below.
\begin{equation} \label{amp2}
\begin{tikzpicture}[x=4cm, y=4cm, thick]  
\def\a{0.3333}  
\def\b{0.6667}   

\draw (0,0) -- (1,0) node[pos=0.5, below, font=\small] {$e^{\varepsilon_{1}}_{1}$};   
\node[above=5pt, font=\small] at (0.5,0) {$L_1$};

\draw (0,0) -- (0,1) node[pos=0.5, left, font=\small] {$e^{\varepsilon_{2}}_{2}$};   
\node[right=5pt, font=\small] at (0,0.5) {$L_3$};

\draw (0,1) -- (1,1) node[pos=0.5, above, font=\small] {$\tau e^{\varepsilon_{1}}_{1}$};  

\node[below=5pt, font=\small] at (0.5,1) {$L_4$};
\draw (\b,1) -- (1,1) node[pos=0.5, above, font=\small] {};   

\draw (1,1) -- (1,0) node[pos=0.5, right, font=\small] {$\tau e^{\varepsilon_{1}}_{1}$};  

\node[left=5pt, font=\small] at (1,0.5) {$L_2$};

\filldraw (0,0) circle (1.5pt) node[below left] {$(0,0)$};  
\filldraw (1,0) circle (1.5pt) node[below right] {$(1,0)$};  
\filldraw (1,1) circle (1.5pt) node[above right] {$(1,1)$};  
\filldraw (0,1) circle (1.5pt) node[above left] {$(0,1)$};  
\end{tikzpicture}
\end{equation}
Let $h_{1},h_{2},h_{3},h_{4}$ be the homeomorphisms defined earlier:
$$h_{1}: L_{1} \rightarrow I \text{ by } ( t, 0) \mapsto t,$$ 
$$h_{2}: L_{2} \rightarrow I \text{ by } (1,t) \mapsto t,$$
$$h_{3}: L_{3} \rightarrow I \text{ by } (0, t) \mapsto 1-t,$$
$$h_{4}: L_{4} \rightarrow I \text{ by } (t,1) \mapsto 1-t.$$ 
Consider now two maps
$$\nu_{\mathcal{D}}\circ j_{1} \circ \Phi^{\varepsilon_{1}}_{e_{1}}\circ h_{1} \text{ and } c_{(\nu_{\mathcal{D}} \circ j_{1})(\tau e^{\varepsilon_{1}}_{1})}\circ h_{2},$$
which are continuous and 
\begin{align*}
(\nu_{\mathcal{D}}\circ j_{1} \circ \Phi^{\varepsilon_{1}}_{e_{1}}\circ h_{1} )(1,0)&= (\nu_{\mathcal{D}}\circ j_{1} \circ \Phi^{\varepsilon_{1}}_{e_{1}})(1)\\
&= \tau e^{\varepsilon_{1}}_{1}\\
&= c_{(\nu_{\mathcal{D}} \circ j_{1})(\tau e^{\varepsilon_{1}}_{1})}(0)\\
&= (c_{(\nu_{\mathcal{D}} \circ j_{1})(\tau e^{\varepsilon_{1}}_{1})}\circ h_{2})(1,0).
\end{align*}
We can therefore glue these two maps together to form a continuous map
$$\phi^{(1)}_{[w,p,w']}: L_{1} \cup L_{2} \rightarrow \mathcal{D}(\cP)$$
such that
$$\phi^{(1)}_{[w,p,w']}|_{L_{1}}= \nu_{\mathcal{D}}\circ j_{1} \circ \Phi^{\varepsilon_{1}}_{e_{1}}\circ h_{1} \text{ and } \phi^{(1)}_{[w,p,w']}|_{L_{2}}=c_{(\nu_{\mathcal{D}} \circ j_{1})(\tau e^{\varepsilon_{1}}_{1})}\circ h_{2}.$$
We see that for every $(t,0) \in L_{1}$,
\begin{equation} \label{A1}
\phi^{(1)}_{[w,p,w']}(t,0)=\nu_{\mathcal{D}}( j_{1}(\Phi^{\varepsilon_{1}}_{e_{1}}(t)))=\Phi^{\varepsilon_{1}}_{e_{1}}(t),
\end{equation}
where the last equality follows from the fact that $\nu_{\mathcal{D}}\circ j_{1}$ is a topological embedding. Also we see that for every $(1,t) \in L_{2}$
\begin{equation} \label{A2}
\phi^{(1)}_{[w,p,w']}(1,t)=\tau e^{\varepsilon_{1}}_{1}=\Phi^{\varepsilon_{1}}_{e_{1}}(1).
\end{equation}
Observe now that
\begin{equation} \label{f1-r1}
\phi^{(1)}_{[w,p,w']}(0,0)= \iota e^{\varepsilon_{1}}_{1}=\tau e^{\varepsilon_{2}}_{2}
\end{equation}
and 
\begin{equation} \label{f1-r2}
\phi^{(1)}_{[w,p,w']}(1,1)=\tau e^{\varepsilon_{1}}_{1}=\iota e^{\varepsilon_{2}}_{2}.
\end{equation}
Further we consider mappings
$$\nu_{\mathcal{D}}\circ j_{1} \circ \Phi^{\varepsilon_{2}}_{e_{2}}\circ h_{3} \text{ and } c_{(\nu_{\mathcal{D}} \circ j_{1})(\tau e^{\varepsilon_{1}}_{1})}\circ h_{4},$$
which are continuous and 
\begin{align*}
(\nu_{\mathcal{D}}\circ j_{1} \circ \Phi^{\varepsilon_{2}}_{e_{2}}\circ h_{3} )(0,1)&= (\nu_{\mathcal{D}}\circ j_{1} \circ \Phi^{\varepsilon_{2}}_{e_{2}})(0)\\
&=  \iota e^{\varepsilon_{2}}_{2}\\
&=\tau e^{\varepsilon_{1}}_{1}\\
&= c_{(\nu_{\mathcal{D}} \circ j_{1})(\tau e^{\varepsilon_{1}}_{1})}(1)\\
&= (c_{(\nu_{\mathcal{D}} \circ j_{1})(\tau e_{1}^{\varepsilon_{1}})}\circ h_{4})(0,1).
\end{align*}
We can therefore glue these two maps together to form a continuous map
$$\phi^{(2)}_{[w,p,w']}: L_{3} \cup L_{4} \rightarrow \mathcal{D}(\cP)$$
such that
$$\phi^{(2)}_{[w,p,w']}|_{L_{3}}= \nu_{\mathcal{D}}\circ j_{1} \circ \Phi^{\varepsilon_{2}}_{e_{2}}\circ h_{3} \text{ and } \phi^{(2)}_{[w,p,w']}|_{L_{4}}=c_{(\nu_{\mathcal{D}} \circ j_{1})(\tau e^{\varepsilon_{1}}_{1})}\circ h_{4}.$$
We see that for every $(0,t) \in L_{3}$,
\begin{equation} \label{A3}
\phi^{(2)}_{[w,p,w']}(0,t)=\nu_{\mathcal{D}}(j_{1}(\Phi^{\varepsilon_{2}}_{e_{2}}(1-t)))=\Phi^{\varepsilon_{2}}_{e_{2}}(1-t),
\end{equation}
and for every $(t,1) \in L_{4}$
\begin{equation} \label{A4}
\phi^{(2)}_{[w,p,w']}(t,1)=\tau e^{\varepsilon_{1}}_{1}=\Phi^{\varepsilon_{2}}_{e_{2}}(0).
\end{equation}
Observe now that
\begin{equation} \label{f1-l1}
\phi^{(2)}_{[w,p,w']}(0,0)=\tau e^{\varepsilon_{2}}_{2}= \iota e^{\varepsilon_{1}}_{1}
\end{equation}
and 
\begin{equation} \label{f1-l2}
\phi^{(2)}_{[w,p,w']}(1,1)=\tau e^{\varepsilon_{1}}_{1}=\iota e^{\varepsilon_{2}}_{2}.
\end{equation}
It follows from (\ref{f1-r1}), (\ref{f1-l1}), (\ref{f1-r2}) and (\ref{f1-l2}) that we can glue $\phi^{(1)}_{[w,p,w']}$ with $\phi^{(2)}_{[w,p,w']}$ to obtain a continuous map (the attaching map of $[w,p, w']$)
$$\phi_{[w,p,w']}: S^{1}_{[w,p,w']} \rightarrow \mathcal{D}(\cP),$$
which maps $S^{1}_{[w,p,w']}$ onto 
$$\partial [w,p,w']= e_{1} \cup e_{2}.$$ 
which is the topological boundary of $[w,p,w']$. We remark here that (\ref{A1}) and (\ref{A3}) justify the presence of $e_{1}^{\varepsilon_{1}}$ and $e_{2}^{\varepsilon_{2}}$ in $\delta [w,p,w']$.\\
(ii) If $n=3$, then 
$$\delta [w,p,w']=e_{1}^{\varepsilon_{1}} \circ e_{2}^{\varepsilon_{2}} \circ e_{3}^{\varepsilon_{3}}.$$
The boundary $S^{1}_{[w,p,w']}$ of the attaching disk $D^{2}_{[w,p,w']}$ is labeled as below.
\begin{equation} \label{amp3}
\begin{tikzpicture}[x=4cm, y=4cm, thick]  
\def\a{0.3333}  
\def\b{0.6667}   

\draw (0,0) -- (1,0) node[pos=0.5, below, font=\small] {$e^{\varepsilon_{1}}_{1}$};   
\node[above=5pt, font=\small] at (0.5,0) {$L_1$};

\draw (0,0) -- (0,1) node[pos=0.5, left, font=\small] {$e^{\varepsilon_{3}}_{3}$};   
\node[right=5pt, font=\small] at (0,0.5) {$L_3$};

\draw (0,1) -- (1,1) node[pos=0.5, above, font=\small] {$\tau e^{\varepsilon_{2}}_{2} $};  

\node[below=5pt, font=\small] at (0.5,1) {$L_4$};
\draw (\b,1) -- (1,1) node[pos=0.5, above, font=\small] {};   

\draw (1,1) -- (1,0) node[pos=0.5, right, font=\small] {$e^{\varepsilon_{2}}_{2}$};  

\node[left=5pt, font=\small] at (1,0.5) {$L_2$};

\filldraw (0,0) circle (1.5pt) node[below left] {$(0,0)$};  
\filldraw (1,0) circle (1.5pt) node[below right] {$(1,0)$};  
\filldraw (1,1) circle (1.5pt) node[above right] {$(1,1)$};  
\filldraw (0,1) circle (1.5pt) node[above left] {$(0,1)$};  
\end{tikzpicture}
\end{equation}
Let $h_{1},h_{2},h_{3},h_{4}$ be the homeomorphisms defined earlier:
$$h_{1}: L_{1} \rightarrow I \text{ by } ( t, 0) \mapsto t,$$ 
$$h_{2}: L_{2} \rightarrow I \text{ by } (1,t) \mapsto t,$$
$$h_{3}: L_{3} \rightarrow I \text{ by } (0, t) \mapsto 1-t,$$
$$h_{4}: L_{4} \rightarrow I \text{ by } (t,1) \mapsto 1-t.$$ 
Consider now two maps
$$\nu_{\mathcal{D}}\circ j_{1} \circ \Phi^{\varepsilon_{1}}_{e_{1}}\circ h_{1} \text{ and } \nu_{\mathcal{D}}\circ j_{1} \circ \Phi^{\varepsilon_{2}}_{e_{2}}\circ h_{2},$$
which are continuous and
\begin{align*}
(\nu_{\mathcal{D}}\circ j_{1} \circ \Phi^{\varepsilon_{1}}_{e_{1}}\circ h_{1})(1,0)&= (\nu_{\mathcal{D}}\circ j_{1} \circ \Phi^{\varepsilon_{1}}_{e_{1}})(h_{1}(1,0))\\
&=(\nu_{\mathcal{D}}\circ j_{1})(\Phi^{\varepsilon_{1}}_{e_{1}}(1))\\
&=\tau e^{\varepsilon_{1}}_{1}\\
&= (\nu_{\mathcal{D}}\circ j_{1})(\Phi^{\varepsilon_{2}}_{e_{2}}(0))\\
&= (\nu_{\mathcal{D}}\circ j_{1} \circ \Phi^{\varepsilon_{2}}_{e_{2}}\circ h_{2})(1,0).
\end{align*}
This implies that we can glue together these functions to get a continuous map
$$\phi^{(1)}_{[w,p,w']}: L_{1} \cup L_{2} \rightarrow \mathcal{D}(\cP)$$
such that
$$\phi^{(1)}_{[w,p,w']}|_{L_{1}}=\nu_{\mathcal{D}}\circ j_{1} \circ \Phi^{\varepsilon_{1}}_{e_{1}}\circ h_{1} \text{ and } \phi^{(1)}_{[w,p,w']}|_{L_{2}}=\nu_{\mathcal{D}}\circ j_{1} \circ \Phi^{\varepsilon_{2}}_{e_{2}}\circ h_{2}.$$
Observe that for every $(t,0) \in L_{1}$,
\begin{equation} \label{B1}
\phi^{(1)}_{[w,p,w']}(t,0)= \nu_{\mathcal{D}}(j_{1} (\Phi^{\varepsilon_{1}}_{e_{1}}(t)))=\Phi^{\varepsilon_{1}}_{e_{1}}(t),
\end{equation}
and in particular we have
\begin{equation} \label{f2-r1}
\phi^{(1)}_{[w,p,w']}(0,0)= \Phi^{\varepsilon_{1}}_{e_{1}}(0)= \iota e_{1}^{\varepsilon_{1}}.
\end{equation}
Also for every $(1,t) \in L_{2}$ we have
\begin{equation} \label{B2}
\phi^{(1)}_{[w,p,w']}(1,t)=\nu_{\mathcal{D}}(j_{1}( \Phi^{\varepsilon_{2}}_{e_{2}}(t)))=\Phi^{\varepsilon_{2}}_{e_{2}}(t),
\end{equation}
from which we get that
\begin{equation} \label{f2-r2}
\phi^{(1)}_{[w,p,w']}(1,1)= \Phi^{\varepsilon_{2}}_{e_{2}}(1)= \tau e_{2}^{\varepsilon_{2}}.
\end{equation}
Further we consider two other mappings
$$\phi^{(1)}_{[w,p,w']}|_{L_{1}}=\nu_{\mathcal{D}}\circ j_{1} \circ \Phi^{\varepsilon_{3}}_{e_{3}}\circ h_{3} \text{ and } c_{(\nu_{\mathcal{D}} \circ j_{1})(\tau e^{\varepsilon_{2}}_{2})}\circ h_{4},$$
which are continuous and
\begin{align*}
(\nu_{\mathcal{D}}\circ j_{1} \circ \Phi^{\varepsilon_{3}}_{e_{3}}\circ h_{3})(0,1)&= (\nu_{\mathcal{D}}\circ j_{1} \circ \Phi^{\varepsilon_{3}}_{e_{3}})(h_{3}(0,1))\\
&= (\nu_{\mathcal{D}}\circ j_{1}) ( \Phi^{\varepsilon_{3}}_{e_{3}}(0))\\
&= \iota e_{3}^{\varepsilon_{3}}\\
&= \tau e_{2}^{\varepsilon_{2}}\\
&= (c_{(\nu_{\mathcal{D}} \circ j_{1})(\tau e^{\varepsilon_{2}}_{2})})(1)\\
&= (c_{(\nu_{\mathcal{D}} \circ j_{1})(\tau e^{\varepsilon_{2}}_{2})}\circ h_{4})(0,1).
\end{align*}
It follows that we can glue these two maps together to obtain a continuous map
$$\phi^{(2)}_{[w,p,w']}: L_{3} \cup L_{4} \rightarrow \mathcal{D}(\cP)$$
such that
$$\phi^{(2)}_{[w,p,w']}|_{L_{3}}=\nu_{\mathcal{D}}\circ j_{1} \circ \Phi^{\varepsilon_{3}}_{e_{3}}\circ h_{3} \text{ and } \phi^{(2)}_{[w,p,w']}|_{L_{4}}=c_{(\nu_{\mathcal{D}} \circ j_{1})(\tau e^{\varepsilon_{2}}_{2})}\circ h_{4}.$$
Observe now that for every $(0,t) \in L_{3}$ we have
\begin{equation} \label{B3}
\phi^{(2)}_{[w,p,w']}(0,t)= \nu_{\mathcal{D}}(j_{1}(\Phi^{\varepsilon_{3}}_{e_{3}}(1-t)))=\Phi^{\varepsilon_{3}}_{e_{3}}(1-t),
\end{equation}
and that in particular
\begin{equation} \label{f2-l1}
\phi^{(2)}_{[w,p,w']}(0,0)= \Phi^{\varepsilon_{3}}_{e_{3}}(1)= \tau e_{3}^{\varepsilon_{3}}=\iota e_{1}^{\varepsilon_{1}}.
\end{equation}
Also we have that for every $(t,1) \in L_{4}$,
\begin{equation} \label{B4}
\phi^{(2)}_{[w,p,w']}(t,1)=  (c_{(\nu_{\mathcal{D}} \circ j_{1})(\tau e^{\varepsilon_{2}}_{2})}\circ h_{4})(t,1)= \tau e_{2}^{\varepsilon_{2}},
\end{equation}
which implies that
\begin{equation} \label{f2-l2}
\phi^{(2)}_{[w,p,w']}(1,1)= \tau e_{2}^{\varepsilon_{2}}.
\end{equation}
We get from (\ref{f2-r1}), (\ref{f2-l1}), (\ref{f2-r2}) and (\ref{f2-l2}) that $\phi^{(1)}_{[w,p,w']}$ and $\phi^{(2)}_{[w,p,w']}$ can be glued together to obtain the attaching map
$$\phi_{[w,p,w']}:S_{[w,p,w']}^{1} \rightarrow \mathcal{D}(\cP)$$
of $[w,p,w']$ which maps $S_{[w,p,w']}^{1}$ onto the topological boundary $\partial [w,p,w']=e_{1} \cup e_{2} \cup e_{3}$ of $[w,p,w']$. The justification for the presence of $e_{i}^{\varepsilon_{i}}$ in $\delta [w,p,w']$ is done in the the same way as for cells of type (i). \\
(iii) If $n\geq 4$, then 
$$\delta [w,p,w']=e^{\varepsilon_{1}}_{1} \circ e^{\varepsilon_{2}}_{2} \circ \dots \circ e^{\varepsilon_{n-1}}_{n-1} \circ e^{\varepsilon_{n}}_{n}.$$
It follows that we can find $2 \leq s \leq n-1$ such that 
$$\delta [w,p,w']=e^{\varepsilon_{1}}_{1} \circ e^{\varepsilon_{2}}_{2} \circ \dots \circ e_{s}^{\varepsilon_{s}} \circ e^{\varepsilon_{s+1}}_{s+1} \circ \dots \circ e_{n-1}^{\varepsilon_{n-1}} \circ e^{\varepsilon_{n}}_{n}.$$
The boundary $S^{1}_{[w,p,w']}$ of the attaching disk $D^{2}_{[w,p,w']}$ will be labeled this time as below.
\begin{equation} \label{amp4}
\begin{tikzpicture}[x=4cm, y=4cm, thick]  
\def\a{0.3333}  
\def\b{0.6667}   

\draw (0,0) -- (1,0) node[pos=0.5, below, font=\small] {$e_{1}^{\varepsilon_{1}}$};   
\node[above=5pt, font=\small] at (0.5,0) {$L_1$};

\draw (0,0) -- (0,1) node[pos=0.5, left, font=\small] {$e_{n}^{\varepsilon_{n}}$};   
\node[right=5pt, font=\small] at (0,0.5) {$L_3$};

\draw (0,1) -- (\a,1) node[pos=0.5, above, font=\small] {$e^{\varepsilon_{n-1}}_{n-1}$};  
\draw[dotted] (\a,1) -- (\b,1);  
\node[below=5pt, font=\small] at (0.5,1) {$L_4$};
\draw (\b,1) -- (1,1) node[pos=0.5, above, font=\small] {$e^{\varepsilon_{s+1}}_{s+1}$};   

\draw (1,1) -- (1,\b) node[pos=0.5, right, font=\small] {$e^{\varepsilon_{s}}_{s}$};  
\draw[dotted] (1,\b) -- (1,\a);  
\node[left=5pt, font=\small] at (1,0.5) {$L_2$};
\draw (1,\a) -- (1,0) node[pos=0.5, right, font=\small] {$e^{\varepsilon_{2}}_{2}$};   

\filldraw (0,0) circle (1.5pt) node[below left] {$(0,0)$};  
\filldraw (1,0) circle (1.5pt) node[below right] {$(1,0)$};  
\filldraw (1,1) circle (1.5pt) node[above right] {$(1,1)$};  
\filldraw (0,1) circle (1.5pt) node[above left] {$(0,1)$};  
\end{tikzpicture}
\end{equation}
Let $h_{1},h_{2},h_{3},h_{4}$ be the homeomorphisms defined earlier:
$$h_{1}: L_{1} \rightarrow I \text{ by } ( t, 0) \mapsto t,$$ 
$$h_{2}: L_{2} \rightarrow I \text{ by } (1,t) \mapsto t,$$
$$h_{3}: L_{3} \rightarrow I \text{ by } (0, t) \mapsto 1-t,$$
$$h_{4}: L_{4} \rightarrow I \text{ by } (t,1) \mapsto 1-t.$$ 
Consider now two maps
$$\nu_{\mathcal{D}}\circ j_{1} \circ \Phi^{\varepsilon_{1}}_{e_{1}}\circ h_{1} \text{ and } \nu_{\mathcal{D}}\circ j_{1} \circ (\Phi^{\varepsilon_{2}}_{e_{2}} \ast \dots \ast \Phi^{\varepsilon_{s}}_{e_{s}})\circ h_{2},$$
which are continuous and
\begin{align*}
(\nu_{\mathcal{D}}\circ j_{1} \circ \Phi^{\varepsilon_{1}}_{e_{1}}\circ h_{1})(1,0)&=( \nu_{\mathcal{D}}\circ j_{1} )(\Phi^{\varepsilon_{1}}_{e_{1}}(1))\\
&=\tau e_{1}^{\varepsilon_{1}}\\
&= \iota e_{2}^{\varepsilon_{2}}\\
&= (\nu_{\mathcal{D}}\circ j_{1} \circ (\Phi^{\varepsilon_{2}}_{e_{2}} \ast \dots \ast \Phi^{\varepsilon_{s}}_{e_{s}}))(0)\\
&=(\nu_{\mathcal{D}}\circ j_{1} \circ (\Phi^{\varepsilon_{2}}_{e_{2}} \ast \dots \ast \Phi^{\varepsilon_{s}}_{e_{s}})\circ h_{2})(1,0).
\end{align*}
This shows that we can glue these two maps together to get in return a continuous map
$$\phi^{(1)}_{[w,p,w']}: L_{1} \cup L_{2} \rightarrow \mathcal{D}(\cP)$$
such that
$$\phi^{(1)}_{[w,p,w']}|_{L_{1}}=\nu_{\mathcal{D}}\circ j_{1} \circ \Phi^{\varepsilon_{1}}_{e_{1}}\circ h_{1} \text{ and } \phi^{(1)}_{[w,p,w']}|_{L_{2}}=\nu_{\mathcal{D}}\circ j_{1} \circ (\Phi^{\varepsilon_{2}}_{e_{2}} \ast \dots \ast \Phi^{\varepsilon_{s}}_{e_{s}})\circ h_{2}.$$
Observe now that for every $(t,0) \in L_{1}$,
\begin{equation} \label{C1}
\phi^{(1)}_{[w,p,w']}(t,0)=\nu_{\mathcal{D}}( j_{1} (\Phi^{\varepsilon_{1}}_{e_{1}}(t)))=\Phi^{\varepsilon_{1}}_{e_{1}}(t),
\end{equation}
and then
\begin{equation} \label{f3-r1}
\phi^{(1)}_{[w,p,w']}(0,0)= \Phi^{\varepsilon_{1}}_{e_{1}}(0)= \iota e_{1}^{\varepsilon_{1}}.
\end{equation}
Also we see that for every $(1,t) \in L_{2}$, there is some $i=2,...,s$ such that $t \in [\dfrac{i-2}{s-1}, \dfrac{i-1}{s-1}]$, and then
\begin{align}
\phi^{(1)}_{[w,p,w']}(1,t)&= (\nu_{\mathcal{D}}\circ j_{1} \circ (\Phi^{\varepsilon_{2}}_{e_{2}} \ast \dots \ast \Phi^{\varepsilon_{s}}_{e_{s}}) \circ h_{2})(1,t) \notag\\
&= \nu_{\mathcal{D}}(j_{1} (\Phi^{\varepsilon_{2}}_{e_{2}}  \ast \dots \ast \Phi^{\varepsilon_{s}}_{e_{s}} )(t)))\notag \\
&=\nu_{\mathcal{D}}(j_{1} (\Phi^{\varepsilon_{i}}_{e_{i}}((s-1)t-(i-2))) \notag\\
&= \Phi^{\varepsilon_{i}}_{e_{i}}((s-1)t-(i-2)). && \label{C2}
\end{align}
It follows from the above that
\begin{equation} \label{f3-r2}
\phi^{(1)}_{[w,p,w']}(1,1)= \Phi^{\varepsilon_{s}}_{e_{s}}(1)= \tau e_{s}^{\varepsilon_{s}}.
\end{equation}
Next we consider another pair of continuous maps
$$\nu_{\mathcal{D}}\circ j_{1} \circ \Phi^{\varepsilon_{n}}_{e_{n}}\circ h_{3} \text{ and } \nu_{\mathcal{D}}\circ j_{1} \circ (\Phi^{\varepsilon_{s+1}}_{e_{s+1}} \ast \dots \ast \Phi^{\varepsilon_{n-1}}_{e_{n-1}})\circ h_{4},$$
for which we see that
\begin{align*}
(\nu_{\mathcal{D}}\circ j_{1} \circ \Phi^{\varepsilon_{n}}_{e_{n}}\circ h_{3} )(0,1)&= (\nu_{\mathcal{D}}\circ j_{1}) (\Phi^{\varepsilon_{n}}_{e_{n}}(0))\\
&= \iota e_{n}^{\varepsilon_{n}}\\
&= \tau e_{n-1}^{\varepsilon_{n-1}}\\
&=(\nu_{\mathcal{D}}\circ j_{1} \circ (\Phi^{\varepsilon_{s+1}}_{e_{s+1}} \ast \dots \ast \Phi^{\varepsilon_{n-1}}_{e_{n-1}}))(1)\\
&=(\nu_{\mathcal{D}}\circ j_{1} \circ (\Phi^{\varepsilon_{s+1}}_{e_{s+1}} \ast \dots \ast \Phi^{\varepsilon_{n-1}}_{e_{n-1}})\circ h_{4})(0,1).
\end{align*}
It follows that we can glue these maps to get a continuous map
$$\phi^{(2)}_{[w,p,w']}: L_{3} \cup L_{4} \rightarrow \mathcal{D}(\cP)$$
such that
$$\phi^{(2)}_{[w,p,w']}|_{L_{3}}=\nu_{\mathcal{D}}\circ j_{1} \circ \Phi^{\varepsilon_{n}}_{e_{n}}\circ h_{3}  \text{ and } \phi^{(2)}_{[w,p,w']}|_{L_{4}}=\nu_{\mathcal{D}}\circ j_{1} \circ (\Phi^{\varepsilon_{s+1}}_{e_{s+1}} \ast \dots \ast \Phi^{\varepsilon_{n-1}}_{e_{n-1}})\circ h_{4}.$$
Observe that for every $(0,t) \in L_{3}$ we have that
\begin{equation} \label{C3}
\phi^{(2)}_{[w,p,w']}(0,t)=\nu_{\mathcal{D}}( j_{1} (\Phi^{\varepsilon_{n}}_{e_{n}}(1-t)))=\Phi^{\varepsilon_{n}}_{e_{n}}(1-t),
\end{equation}
and in particular we have 
\begin{equation} \label{f3-l1}
\phi^{(2)}_{[w,p,w']}(0,0)= \Phi^{\varepsilon_{n}}_{e_{n}}(1)= \tau e_{n}^{\varepsilon_{n}}=\iota e_{1}^{\varepsilon_{1}}.
\end{equation}
Letting now $r=n-s-1$, we observe that for every $(t,1) \in L_{4}$ there is some $i=0,...,r-1$ such that $1-t \in [\dfrac{i}{r}, \dfrac{i+1}{r}]$, and 
\begin{align}
\phi^{(2)}_{[w,p,w']}(t,1)&=(\Phi^{\varepsilon_{s+1}}_{e_{s+1}} \ast \dots \ast \Phi^{\varepsilon_{n-1}}_{e_{n-1}})(1-t) \notag\\
&= \Phi^{\varepsilon_{s+1+i}}_{e_{s+1+i}}(r(1-t)-i). && \label{C4}
\end{align}
It follows from this that
\begin{equation} \label{f3-l2}
\phi^{(2)}_{[w,p,w']}(1,1)=\Phi^{\varepsilon_{s+1}}_{e_{s+1}}(0)=\iota e_{s+1}^{\varepsilon_{s+1}}=\tau e_{s}^{\varepsilon_{s}}.
\end{equation}
Taking now into account (\ref{f3-r1}), (\ref{f3-l1}), (\ref{f3-r2}) and (\ref{f3-l2}) we conclude that $\phi^{(1)}_{[w,p,w']}$ can be glued with $\phi^{(2)}_{[w,p,w']}$ to obtain the attaching map 
$$\phi_{[w,p,w']}: S^{1}_{[w,p,w']} \rightarrow \mathcal{D}(\cP)$$
of $[w,p,w']$. We see that $\phi_{[w,p,w']}$ maps $S^{1}_{[w,p,w']}$ onto the set
$$\partial [w,p,w']=e_{1} \cup \dots \cup e_{n}$$
which is the topological boundary of $[w,p,w']$. The presence of $e^{\varepsilon_{1}}_{1}$ and $e^{\varepsilon_{n}}_{n}$ in $\delta [w,p,w']$ is justified in the same way as before. The presence of $e^{\varepsilon_{i}}_{i}$ for $2\leq i \leq s$ follows from (\ref{C2}). Finally, the presence of $e_{s+1+i}^{\varepsilon_{s+1+i}}$ is justified from (\ref{C4}).

In what follows we will write for short $F \times \mathbf{p} \times F=\mathfrak{C}$ and we let $\mathcal{D}(\cP) \sqcup F. \mathbf{p}.F$ be the quotient space
$$\mathcal{D}(\cP) \sqcup_{\phi_{\mathfrak{c}}} D^{2}_{\mathfrak{c}}= \left(\mathcal{D}(\cP) \underset{ \mathfrak{c} \in \mathfrak{C}}{\sqcup} D^{2}_{\mathfrak{c}}\right)/\sim$$
where $\sim$ is the equivalence relation generated by all pairs
\begin{equation} \label{rel22}
(u, \phi_{\mathfrak{c}}(u)) \text{ with } u \in S^{1}_{\mathfrak{c}} \text{ and } \mathfrak{c} \in \mathfrak{C}.
\end{equation}
Alternatively $\mathcal{D}(\cP) \sqcup F. \mathbf{p}.F$ is given by the following push out diagram
\begin{equation*}
\xymatrix@R=30pt @C=60pt{ \underset{\mathfrak{c}}{\sqcup}S^{1}_{\mathfrak{c}} \ar[r]^{\mu} \ar[d]^{\phi} &  \underset{\mathfrak{c}}{\sqcup}D^{2}_{\mathfrak{c}} \ar[d]^{\nu_{\mathfrak{D}} \mu_{2}}\\
\mathcal{D}(\cP) \ar[r]^{\nu_{\mathfrak{D}}\mu_{1}} & \mathcal{D}(\cP) \sqcup_{\phi_{\mathfrak{c}}} D^{2}_{\mathfrak{c}}}
\end{equation*}
where 
\begin{align*}
& \mu && \text{is the inclusion of the boundary,}\\
&\phi=\underset{\mathfrak{c}}{\sqcup} \phi_{\mathfrak{c}} && \text{is the direct sum map,}\\
&\mu_{1}: \mathcal{D}(\cP) \rightarrow \mathcal{D}(\cP) \underset{\mathfrak{c} \in \mathfrak{C}}{\sqcup} D^{2}_{\mathfrak{c}} ,\\
& \mu_{2}: \underset{\mathfrak{c} \in \mathfrak{C}}{\sqcup} D^{2}_{\mathfrak{c}} \rightarrow \mathcal{D}(\cP) \underset{\mathfrak{c} \in \mathfrak{C}}{\sqcup} D^{2}_{\mathfrak{c}} && \text{are the inclusion maps, and}\\
&\nu_{\mathfrak{D}}: \mathcal{D}(\cP) \underset{\mathfrak{c} \in \mathfrak{C}}{\sqcup} D^{2}_{\mathfrak{c}} \rightarrow \left(\mathcal{D}(\cP) \underset{\mathfrak{c} \in \mathfrak{C}}{\sqcup} D^{2}_{\mathfrak{c}}\right)/\sim && \text{is the quotient map.}
\end{align*}
For every $\mathfrak{c} \in \mathfrak{C}$ we let
$$\mu_{\mathfrak{c}}: D^{2}_{\mathfrak{c}} \rightarrow \underset{\mathfrak{c} }{\sqcup} D^{2}_{\mathfrak{c}}$$
to be the inclusion map. The characteristic map of a 2-cell $\mathfrak{c} \in \mathfrak{C}$ is the composite
$$\Phi_{\mathfrak{c}}=\nu_{\mathfrak{D}} \circ \mu_{2} \circ \mu_{\mathfrak{c}}.$$
Now we will describe in topological terms the bi action of $F$ on $F\times \mathbf{p} \times F$. For a fixed pair of words $z,z'$ consider the continuous mapping $\nu_{\mathfrak{D}} \circ \mu_{1} \circ \rho_{(z,z')}: \mathcal{D}(\cP) \rightarrow \mathcal{D}(\cP) {\sqcup_{\phi_\mathfrak{c}}} D^{2}_{\mathfrak{c}}$ where $\rho_{(z,z')}: \mathcal{D}(\cP) \rightarrow \mathcal{D}(\cP)$ is the mapping of diagram (\ref{po2}). Further, for every $\mathfrak{c} \in \mathfrak{C}$, we define
$$\eta_{\mathfrak{c}}: D^{2}_{\mathfrak{c}} \rightarrow \mathcal{D}(\cP) \sqcup_{\phi_{\mathfrak{c}}} D^{2}_{\mathfrak{c}},$$
by
$$\eta_{\mathfrak{c}}(u)=\nu_{\mathfrak{D}}(\mu_{2}(\mu_{z.\mathfrak{c}.z'}(u))).$$
Again this mapping is continuous and so is the extension
$$\eta=\underset{\mathfrak{c}}{\sqcup}\eta_{\mathfrak{c}}: \underset{\mathfrak{c}}{\sqcup} D^{2}_{\mathfrak{c}} \rightarrow \mathcal{D}(\cP) \sqcup_{\phi_{\mathfrak{c}}} D^{2}_{\mathfrak{c}}$$
of all $\eta_{\mathfrak{c}}$. Now we check that $\eta \circ \mu=(\nu_{\mathfrak{D}} \circ \mu_{1} \circ \rho_{(z,z')}) \circ \phi$.
Let $u \in S^{1}_{\mathfrak{c}}$ for some particular $\mathfrak{c} \in \mathfrak{C}$. The proof when $\mathfrak{c}$ is a 2-cell of type (iii) goes like this. Assume that $u \in L_{1} \cup L_{2}$. If $u \in L_{1}$ then
\begin{align*}
((\nu_{\mathfrak{D}} \circ \mu_{1}\circ \rho_{(z,z')}) \circ \phi)(u)&= (\nu_{\mathfrak{D}} \circ \mu_{1}\circ \rho_{(z,z')})(\phi_{\mathfrak{c}}(u))\\
&= (\nu_{\mathfrak{D}}\circ \mu_{1} \circ \rho_{(z,z')})((\nu_{\mathcal{D}} \circ j_{1}\circ \Phi^{\varepsilon_{1}}_{e_{1}} \circ h_{1})(u)) \tag{$u \in L_{1}$}\\
&= \nu_{\mathfrak{D}}(\mu_{1}(((\rho_{(z,z')} \circ \nu_{\mathcal{D}}\circ j_{1} \circ \Phi^{\varepsilon_{1}}_{e_{1}}) (h_{1}(u))))\\
&=\nu_{\mathfrak{D}}(\mu_{1}(((\nu_{\mathcal{D}} \circ j_{1}\circ \Phi^{\varepsilon_{1}}_{z.e_{1}.z'}) (h_{1}(u)))) \tag{by Lemma \ref{ronu}}\\
&=\nu_{\mathfrak{D}} (\mu_{1}(((\nu_{\mathcal{D}} \circ j_{1} \circ \Phi^{\varepsilon_{1}}_{z.e_{1}.z'} \circ h_{1})(u)))\\
&=\nu_{\mathfrak{D}} (\mu_{1}(\phi_{z.\mathfrak{c}.z'}(u)))\tag{$u \in S^{1}_{z.\mathfrak{c}.z'}$}\\
&=(\nu_{\mathfrak{D}} \circ \mu_{2} \circ \mu_{z.\mathfrak{c}.z'})(u) \tag{by the definition of $\sim$}\\
&= (\eta \circ \mu)(u). \tag{$u \in S^{1}_{\mathfrak{c}}$}
\end{align*}
If $u\in L_{2}$, then
{\small
\begin{align*}
((\nu_{\mathfrak{D}}\circ \mu_{1} \circ \rho_{(z,z')}) \circ \phi)(u)&= (\nu_{\mathfrak{D}}\circ \mu_{1} \circ \rho_{(z,z')})(\phi_{\mathfrak{c}}(u))\\
&= (\nu_{\mathfrak{D}}\circ \mu_{1} \circ \rho_{(z,z')}) ((\nu_{\mathcal{D}} \circ j_{1} \circ (\Phi^{\varepsilon_{2}}_{e_{2}} \ast \dots \ast \Phi^{\varepsilon_{s}}_{e_{s}}) \circ h_{2})(u)) \tag{$u \in L_{2}$}\\
&= (\nu_{\mathfrak{D}}\circ \mu_{1} ) ((\rho_{(z,z')} \circ \nu_{\mathcal{D}} \circ j_{1} \circ (\Phi^{\varepsilon_{2}}_{e_{2}} \ast \dots \ast \Phi^{\varepsilon_{s}}_{e_{s}}) \circ h_{2})(u))\\
&= (\nu_{\mathfrak{D}}\circ \mu_{1} ) (((\rho_{(z,z')} \circ \nu_{\mathcal{D}} \circ j_{1} \circ \Phi^{\varepsilon_{s}}_{e_{2}}) \ast \dots \ast (\rho_{(z,z')} \circ \nu_{\mathcal{D}} \circ j_{1} \circ \Phi^{\varepsilon_{s}}_{e_{s}})) \circ h_{2})(u))\\
&=(\nu_{\mathfrak{D}} \circ \mu_{1})((\rho_{(z,z')} \circ \nu_{\mathcal{D}}\circ j_{1} \circ \Phi^{\varepsilon_{2}}_{e_{2}}) \ast \dots \ast (\rho_{(z,z')} \circ \nu_{\mathcal{D}} \circ j_{1}\circ \Phi^{\varepsilon_{s}}_{e_{s}})(h_{2}(u))) \\
&=\nu_{\mathfrak{D}} (\mu_{1}(((\nu_{\mathcal{D}} \circ j_{1} \circ \Phi^{\varepsilon_{2}}_{z.e_{2}.z'}) \ast  \dots \ast (\nu_{\mathcal{D}} \circ j_{1} \circ \Phi^{\varepsilon_{s}}_{z.e_{s}.z'})) (h_{2}(u))) )\tag{by Lemma \ref{ronu}}\\
&= \nu_{\mathfrak{D}}(\mu_{1}(((\nu_{\mathcal{D}} \circ j_{1} \circ(\Phi^{\varepsilon_{2}}_{z.e_{2}.z'} \ast \dots \ast \Phi^{\varepsilon_{s}}_{z.e_{s}.z'})) \circ h_{2})(u)))\\
&= \nu_{\mathfrak{D}} (\mu_{1}(\phi_{z.\mathfrak{c}.z'} (u))) \tag{$u \in S^{1}_{z.\mathfrak{c}.z'}$}\\
&=  (\nu_{\mathfrak{D}} \circ \mu_{2} \circ \mu_{z.\mathfrak{c}.z'})(u) \tag{by the definition of $\sim$}\\
&= (\eta \circ \mu)(u). \tag{$u \in S^{1}_{\mathfrak{c}}$}
\end{align*}
}
The proof when $u \in L_{3} \cup L_{4}$ is done in a similar fashion. 

Now we consider the case when $\mathfrak{c}$ is a 2-cell of type (ii). The proof when $u \in L_{1} \cup L_{2}$ is identical with the corresponding one for (iii) type cells. We check the case when $u \in L_{3} \cup L_{4}$. If $u \in L_{3}$, then 
\begin{align*}
((\nu_{\mathfrak{D}} \circ \mu_{1}\circ \rho_{(z,z')}) \circ \phi)(u)&= (\nu_{\mathfrak{D}} \circ \mu_{1}\circ \rho_{(z,z')})(\phi_{\mathfrak{c}}(u))\\
&= (\nu_{\mathfrak{D}}\circ \mu_{1} \circ \rho_{(z,z')})((\nu_{\mathcal{D}} \circ j_{1}\circ \Phi^{\varepsilon_{3}}_{e_{3}} \circ h_{3})(u)) \tag{$u \in L_{3}$}\\
&= \nu_{\mathfrak{D}}(\mu_{1}(((\rho_{(z,z')} \circ \nu_{\mathcal{D}}\circ j_{1} \circ \Phi^{\varepsilon_{3}}_{e_{3}}) (h_{3}(u))))\\
&=\nu_{\mathfrak{D}}(\mu_{1}(((\nu_{\mathcal{D}} \circ j_{1}\circ \Phi^{\varepsilon_{3}}_{z.e_{3}.z'}) (h_{3}(u)))) \tag{by Lemma \ref{ronu}}\\
&=\nu_{\mathfrak{D}} (\mu_{1}(((\nu_{\mathcal{D}} \circ j_{1} \circ \Phi^{\varepsilon_{3}}_{z.e_{3}.z'} \circ h_{3})(u)))\\
&=\nu_{\mathfrak{D}} (\mu_{1}(\phi_{z.\mathfrak{c}.z'}(u)))\tag{$u \in S^{1}_{z.\mathfrak{c}.z'}$}\\
&=(\nu_{\mathfrak{D}} \circ \mu_{2} \circ \mu_{z.\mathfrak{c}.z'})(u) \tag{by the definition of $\sim$}\\
&= (\eta \circ \mu)(u). \tag{$u \in S^{1}_{\mathfrak{c}}$}
\end{align*}
If $u \in L_{4}$ then
\begin{align*}
((\nu_{\mathfrak{D}}\circ \mu_{1} \circ \rho_{(z,z')}) \circ \phi)(u)&= (\nu_{\mathfrak{D}}\circ \mu_{1} \circ \rho_{(z,z')})(\phi_{\mathfrak{c}}(u))\\
&= (\nu_{\mathfrak{D}}\circ \mu_{1} \circ \rho_{(z,z')})((c_{(\nu_{\mathcal{D}} \circ j_{1})(\tau e^{\varepsilon_{2}}_{2})}\circ h_{4})(u)) \tag{$u \in L_{4}$} \\
&=(\nu_{\mathfrak{D}}\circ \mu_{1} )(( \rho_{(z,z')} \circ c_{(\nu_{\mathcal{D}} \circ j_{1})(\tau e^{\varepsilon_{2}}_{2})} \circ h_{4})(u))\\
&=(\nu_{\mathfrak{D}}\circ \mu_{1} )(( c_{(\nu_{\mathcal{D}} \circ j_{1})(\tau(z.e^{\varepsilon_{2}}_{2}.z'))} \circ h_{4})(u)) \tag{by (\ref{gmc})}\\
&= \nu_{\mathfrak{D}} (\mu_{1}(\phi_{z.\mathfrak{c}.z'} (u))) \tag{$u \in S^{1}_{z.\mathfrak{c}.z'}$}\\
&=  (\nu_{\mathfrak{D}} \circ \mu_{2} \circ \mu_{z.\mathfrak{c}.z'})(u) \tag{by the definition of $\sim$}\\
&= (\eta \circ \mu)(u). \tag{$u \in S^{1}_{\mathfrak{c}}$}
\end{align*}
Finally we check the case when $\mathfrak{c}$ is a 2-cell of type (i). We will make the proof for the case when $u \in L_{1} \cup L_{2}$ since the proof when $u \in L_{3} \cup L_{4}$ is similar. The proof for the case when $u \in L_{1}$ is identical with the corresponding proof for 2-cells of type (iii) and is omitted. We check the case when $u \in L_{2}$. 
\begin{align*}
((\nu_{\mathfrak{D}}\circ \mu_{1} \circ \rho_{(z,z')}) \circ \phi)(u)&= (\nu_{\mathfrak{D}}\circ \mu_{1} \circ \rho_{(z,z')})(\phi_{\mathfrak{c}}(u))\\
&= (\nu_{\mathfrak{D}}\circ \mu_{1} \circ \rho_{(z,z')})((c_{(\nu_{\mathcal{D}} \circ j_{1})(\tau e^{\varepsilon_{1}}_{1})}\circ h_{2})(u)) \tag{$u \in L_{2}$} \\
&=(\nu_{\mathfrak{D}}\circ \mu_{1} )(( \rho_{(z,z')} \circ c_{(\nu_{\mathcal{D}} \circ j_{1})(\tau e^{\varepsilon_{1}}_{1})} \circ h_{2})(u))\\
&=(\nu_{\mathfrak{D}}\circ \mu_{1} )(( c_{(\nu_{\mathcal{D}} \circ j_{1})(z.\tau e^{\varepsilon_{1}}_{1}.z')} \circ h_{2})(u)) \tag{by (\ref{gmc})}\\
&= \nu_{\mathfrak{D}} (\mu_{1}(\phi_{z.\mathfrak{c}.z'} (u))) \tag{$u \in S^{1}_{z.\mathfrak{c}.z'}$}\\
&=  (\nu_{\mathfrak{D}} \circ \mu_{2} \circ \mu_{z.\mathfrak{c}.z'})(u) \tag{by the definition of $\sim$}\\
&= (\eta \circ \mu)(u). \tag{$u \in S^{1}_{\mathfrak{c}}$}
\end{align*}
Since $\eta \circ \mu=(\nu_{\mathfrak{D}} \circ \mu_{1} \circ \rho_{(z,z')}) \circ \phi$ it follows that there is a continuous map $\xi_{(z,z')}: \mathcal{D}(\cP) \sqcup_{\phi_{\mathfrak{c}}} D^{2}_{\mathfrak{c}} \rightarrow \mathcal{D}(\cP) \sqcup_{\phi_{\mathfrak{c}}} D^{2}_{\mathfrak{c}}$ such that the following diagram commutes
\begin{equation} \label{po3}
\xymatrix@R=30pt @C=60pt{ \underset{\mathfrak{c}}{\sqcup}S^{1}_{\mathfrak{c}} \ar[r]^{\mu} \ar[d]^{\phi} &  \underset{\mathfrak{c}}{\sqcup}D^{2}_{\mathfrak{c}} \ar[d]^{\nu_{\mathfrak{D}} \mu_{2}}  \ar@/^1pc/[rdd]^{\eta} \\
\mathcal{D}(\cP) \ar[r]^{\nu_{\mathfrak{D}}\mu_{1}} \ar@/_1pc/[rrd]_{\nu_{\mathfrak{D}}\mu_{1}\rho_{(z,z')}} & \mathcal{D}(\cP) \sqcup_{\phi_{\mathfrak{c}}} D^{2}_{\mathfrak{c}}\ar@{-->}[rd]|{\xi_{(z,z')}} \\
&          & \mathcal{D}(\cP) \sqcup_{\phi_{\mathfrak{c}}} D^{2}_{\mathfrak{c}}}
\end{equation}
Now we see how the characteristic maps of a 2-cell $\sigma' \in F.\mathbf{p}.F$ and that of $z.\sigma'.z'$ are related to each other. It follows that for every $u \in D^{2}_{\sigma'}$,
\begin{align*}
(\xi_{(z,z')} \circ \Phi_{\sigma'})(u)&= (\xi_{(z,z')} \circ \nu_{\mathfrak{D}} \circ \mu_{2}\circ \mu_{\sigma'})(u)\\
&= \eta_{\sigma'}(u) \tag{from the commutativity of (\ref{po3})}\\
&= \nu_{\mathfrak{D}}(\mu_{2}(\mu_{z.\sigma'.z'}(u)))\\
&= \Phi_{z.\sigma'.z'}(u),
\end{align*}
hence we have that
\begin{equation} \label{relch2}
\xi_{(z,z')} \circ \Phi_{\sigma'}=\Phi_{z.\sigma'.z'}.
\end{equation}
Assume that $\sigma'=[u,\sigma,v]$ with $\sigma \in \mathbf{p}$. Since
$$z.\sigma'.z'=z.[u,  \sigma ,v].z'= [zu,  \sigma, vz']=zu. \sigma .vz',$$
then we may replace in (\ref{relch2}) $\Phi_{\sigma'}$ by $\Phi_{[u, \sigma, v]}$ and $\Phi_{z.\sigma'.z'}$ by $\Phi_{[zu,\sigma,vz']}$ to obtain
\begin{equation} \label{relch21}
\xi_{(z,z')} \circ \Phi_{[u,\sigma, v]}=\Phi_{[zu,\sigma,vz']},
\end{equation}
which explains why we made the definition $z.[u,\sigma,v].z'=[zu ,\sigma, vz']$.

We conclude by remarking that the commutativity of the lower triangle of (\ref{po3}) shows that the bi action of $F$ on $\mathcal{D}(\cP)$ is preserved after attaching 2-cells from $F.\mathbf{p}.F$.

\section{Adding extra 2-cells to $\mathcal{D}(\cP)$-a special case}

The special case we deal with in this section is the case of 2-cells that arise by resolving critical pairs of positive edges with the same initial in $\mathcal{D}(\cP)$. We begin with the definition of such pairs.

\begin{definition} \label{critpairs}
A pair $(e, f)$ of positive edges with the same initial forms a critical pair if either
\begin{itemize}
	\item [(1)] One of the pair is both left and right reduced (a critical pair of inclusion type), or
	\item[(2)] One of the pair is left reduced but not right reduced, the other is right reduced but not left reduced and they are not disjoint (a critical pair of overlapping type).
\end{itemize}
In the first case $e=(u, r, 1, v)$, $f=(\lambda, s, 1, \lambda)$ and $s_{+}=ur_{+}v$. In the second case, $e=(\lambda, r, 1, v)$, $f=(u, s, 1, \lambda)$ and $r_{+}v=us_{+}$ where $r_{+}$ overlaps with $s_{+}$ and both $u,v \neq \lambda$, otherwise we would have a pair as in the first case. We say that a critical pair $(e,f)$ is resolvable if one of the following occurs.
\begin{itemize}
\item[(i)] $\tau e=\tau f$.
\item[(ii)] $\tau e \neq \tau f$ and
\begin{itemize}
\item[(ii-1)] There is a positive path $\mathfrak{l}_{(e,f)}$ from $\tau e$ to $\tau f$, or
\item[(ii-2)] There is is a positive path $\mathfrak{r}_{(f,e)}$ from $\tau f$ to $\tau e$, or
\item[(ii-3)] There are positive paths $\lambda_{(e,f)}$ and $\rho_{(e,f)}$ from $\tau e$ and $\tau f$ respectively to a common vertex $w$ which is distinct from $\tau f$ and $\tau e$.
\end{itemize}
\end{itemize}

\end{definition}
Let now $\mathfrak{p}$ be any nonempty set of resolvable critical pairs of positive edges. For every pair $(e,f) \in \mathfrak{p}$, we attach a 2-cell $\sigma$ in the following way. If the pair is of type (i), then the corresponding 2-cell is attached along the closed path
$$\delta \sigma= e \circ f^{-1}.$$
If the pair is of type (ii-1), then $\sigma$ is attached along the path
$$\delta \sigma= e \circ \mathfrak{l}_{(e,f)} \circ f^{-1}.$$
If the pair is of type (ii-2), then $\sigma$ is attached along the path
$$\delta \sigma= e \circ \mathfrak{r}_{(f,e)}^{-1} \circ f^{-1},$$
and if the pair is of type (ii-3), then $\sigma$ is attached along the path
$$\delta \sigma=e \circ \lambda_{(e,f)} \circ \rho_{(e,f)}^{-1} \circ f^{-1}.$$
In each case $\delta \sigma$ will be called the combinatorial boundary of $\sigma$. 

We let $\mathbf{p}$ be the set of 2-cells attached to $\Gamma(\cP)$ as above. It is obvious that $\mathbf{p}$ is in a 1-1 correspondence with $\mathfrak{p}$. Further, for every $\sigma \in \mathbf{p}$ which arises from some $(e,f) \in \mathfrak{p}$ and therefore is attached along the respective $\delta \sigma$ and for every $u,v \in F$ we attach a new 2-cell denoted by $u.\sigma.v$ along the closed path $u.\delta \sigma .v$ and define
$$\delta(u. \sigma. v)= u.\delta \sigma .v.$$
Recollecting, we have attached 2-cells $\sigma$ along closed paths
$$\delta \sigma= g_{1} \circ \dots \circ g_{s} \circ \tilde{g}_{r}^{-1} \circ \dots \circ \tilde{g}_{1}^{-1}.$$
If the 2-cell is of type (i), then $r=s=1$, if it is of type (ii-1), then $r=1$, if it is of type (ii-2) then $s=1$ and if it is of type (ii-3) then $r,s \geq 2$.

We let
$$F.\mathbf{p}.F=\{ u.\sigma.v| \sigma \in \mathbf{p}\}.$$
There is also a natural bi action of $F$ on the set of 2-cells from $F.\mathbf{p}.F$ defined by
$$z.(u.\sigma. v).z'=zu. \sigma. vz'$$
for every $u.\sigma.v \in F.\mathbf{p}.F$ and every $z,z' \in F$. This bi action sometimes is called translation. Unlike square 2-cells, here every 2-cell from $F.\mathbf{p}.F$ is uniquely determined by a 2-cell $\sigma \in \mathbf{p}$ and a pair of words $u,v \in F$. This is shown in the following lemma.
\begin{lemma} \label{unambp}
If $\sigma, \sigma' \in \mathbf{p}$ and $\alpha, \beta, \alpha', \beta' \in F$, then $\alpha. \sigma. \beta=\alpha'. \sigma'. \beta'$ if and only if $\alpha=\alpha', \beta=\beta'$ and $\sigma=\sigma'$.
\end{lemma}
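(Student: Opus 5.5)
The ``if'' direction is trivial, so the plan concerns the ``only if'' direction. I would read the cells through their combinatorial boundaries. A 2-cell $\alpha.\sigma.\beta$ is attached along $\alpha.\delta\sigma.\beta$, and the cells are indexed by these data. So I would treat the equality $\alpha.\sigma.\beta=\alpha'.\sigma'.\beta'$ as an equality of closed paths
$$\alpha.\delta\sigma.\beta=\alpha'.\delta\sigma'.\beta'.$$
By the definition of the attached cells, $\delta\sigma$ always begins with the edge $e$ of the critical pair $(e,f)$ from which $\sigma$ arises. It also always ends with $f^{-1}$, in every one of the types (i), (ii-1), (ii-2), (ii-3). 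Comparing first and last edges of the two paths therefore gives
$$\alpha.e.\beta=\alpha'.e'.\beta' \quad\text{and}\quad \alpha.f.\beta=\alpha'.f'.\beta',$$
where $(e',f')$ is the critical pair of $\sigma'$.

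Next I would use the shape of critical pairs from Definition \ref{critpairs} and split into cases.

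\textbf{Inclusion type.} Here $f=(\lambda,s,1,\lambda)$ is both left and right reduced, so $\alpha.f.\beta=(\alpha,s,1,\beta)$. Suppose $(e',f')$ is also of inclusion type. Then equating 4-tuples gives $\alpha=\alpha'$, $s=s'$ and $\beta=\beta'$ at once.

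\textbf{Overlapping type.} Here $e=(\lambda,r,1,v)$ and $f=(u,s,1,\lambda)$. Then
$$\alpha.e.\beta=(\alpha,r,1,v\beta) \quad\text{and}\quad \alpha.f.\beta=(\alpha u,s,1,\beta).$$
From the first equality I read off $\alpha=\alpha'$ and $r=r'$, since $e'$ is left reduced in the same sense. From the second I read off $\beta=\beta'$ and $s=s'$.

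\textbf{Mixed types.} I must exclude the case where one pair is of inclusion type and the other of overlapping type, or where the roles of the two edges differ. For this I would use the first and last edges jointly. In an inclusion pair one edge has both outer words empty; in an overlapping pair neither edge does, because $u,v\neq\lambda$. After translation by $(\alpha,\beta)$, the edge of the form $(\alpha,\cdot,1,\beta)$ can be located in each boundary, and the word lengths force a contradiction. A mixed case in which a different edge of an inclusion pair is the reduced one is handled the same way, using the left/right outer words.

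Once $\alpha=\alpha'$ and $\beta=\beta'$ are known, cancelling the translation on the boundaries gives $\delta\sigma=\delta\sigma'$. In particular $e=e'$ and $f=f'$, so $(e,f)=(e',f')$. Since $\mathbf{p}$ is in 1-1 correspondence with $\mathfrak{p}$, this gives $\sigma=\sigma'$.

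The step I expect to be the main obstacle is the mixed-type bookkeeping. In the inclusion case I must be careful about which of $e,f$ is the both-reduced edge, since the definition allows either ``one of the pair''. The plan there is to compare the translation words appearing on the reduced edge of each pair, using the initial vertex $\alpha\iota e\beta=\alpha'\iota e'\beta'$ and the lengths of the outer words, in the style of the proofs of Lemma \ref{bmsq} and Lemma \ref{uf}.
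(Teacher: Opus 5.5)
Your proposal is correct and follows essentially the same route as the paper: read off $\alpha.e.\beta=\alpha'.e'.\beta'$ and $\alpha.f.\beta=\alpha'.f'.\beta'$ from the first and last edges of the translated boundaries, compare $4$-tuples in the inclusion/inclusion and overlapping/overlapping cases, and rule out the mixed case by the outer words (the paper makes this explicit as $\alpha'=\alpha u$ and $\alpha=\alpha'u'$, so $\alpha=\alpha uu'$ with $u'\neq\lambda$). Your extra concern about which edge of an inclusion pair is the both-reduced one does not arise in the paper, which fixes $f=(\lambda,s,1,\lambda)$ by convention, and the prefix/suffix length comparison you propose does dispose of it.
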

\begin{proof}
We first exclude the case when $\sigma$ arises from the resolution of a critical pair of inclusion type, let say $e=(u, r, 1, v)$ and $f=(\lambda,s,1,\lambda)$ with $s_{+}=u r_{+}v$, while $\sigma'$ arises from an overlapping pair like $e'=(\lambda, r', 1, v')$ and $f'=(u',s',1,\lambda)$ where $r'_{+}v'=u's'_{+}$ and $r'_{+}$ overlap with $s'_{+}$ but $u', v' \neq \lambda$. Indeed, equating the combinatorial boundaries of $\alpha. \sigma. \beta$ and $\alpha'. \sigma. \beta'$ we have
$$(\alpha u, r, 1, v \beta)=\alpha. e. \beta=\alpha'. e'. \beta'= (\alpha', r' , 1, v'\beta'),$$
which implies $\alpha u=\alpha'$, and since
$$(\alpha, s, 1, \beta)=\alpha. f. \beta= \alpha'.f'. \beta'=(\alpha' u', s', 1, \beta'),$$
we obtain that $\alpha=\alpha'u'$. It follows that $\alpha=\alpha' u'=\alpha u u'$, which is impossible since $u' \neq 1$. 
There are now two possible cases remaining for $\sigma, \sigma'$. First, they both arise from resolving critical pairs of inclusion type. Let $e=(u, r, 1, v)$ and $f=(\lambda,s,1,\lambda)$ with $s_{+}=u r_{+}v$ be the edges that resolve to create $\sigma$, and $e'=(u', r', 1, v')$ and $f'=(\lambda,s',1,\lambda)$ with $s'_{+}=u' r'_{+}v'$ be the edges that resolve to create $\sigma'$. Since $\alpha. \sigma. \beta=\alpha'. \sigma. \beta'$, then
$$(\alpha, s, 1, \beta)=\alpha.f. \beta=\alpha'. f'. \beta'=(\alpha', s', 1, \beta'),$$
from which we have that $\alpha=\alpha', \beta=\beta', s=s'$. But we also have that
$$(\alpha u, r, 1, v \beta)=\alpha. e. \beta=\alpha'. e'. \beta'=(\alpha' u', r', 1, v' \beta'),$$
which imply that $r=r'$, $u=u'$ and $v=v'$. So we get that $e=e'$ and $f=f'$. The second case is when both 2-cells $\sigma, \sigma'$ arise from overlapping pairs. Let $e=(\lambda, r, 1, v)$ and $f=(u,s,1,\lambda)$ where $r_{+}v=us_{+}$ and $r_{+}$ overlap with $s_{+}$ be edges that generate $\sigma$. Similarly, $e'=(\lambda, r', 1, v)$ and $f=(u',s',1,\lambda)$ where $r'_{+}v'=u's'_{+}$ and $r'_{+}$ overlap with $s'_{+}$ be edges that generate $\sigma'$. From the condition that $\alpha. \sigma. \beta=\alpha'. \sigma. \beta'$, we have that
$$(\alpha, r, 1, v\beta)=\alpha. e. \beta= \alpha' . e'. \beta'=(\alpha', r', 1, v'\beta'),$$
from which we infer that $\alpha=\alpha'$, $r=r'$ and $v\beta=v'\beta'$. Also we have that
$$(\alpha u, s, 1, \beta)=\alpha. f. \beta=\alpha'. f'. \beta'=(\alpha'u', s', 1, \beta'),$$
which implies that $s=s'$, $\alpha u=\alpha' u'$ and $\beta=\beta'$. After cancelations we see that $u=u'$ and $v=v'$, therefore $e=e'$ and $f=f'$, hence $\sigma=\sigma'$. 
\end{proof}
\begin{corollary} \label{cunambp}
If $\alpha.\sigma.\beta$ is a translate of some $\sigma \in \mathbf{p}$ which arises from the resolution of a critical pair $(e,f)$ of positive edges with the same initial $\eta$, then $\alpha.\sigma.\beta$ is uniquely determined by the pair of edges $(\alpha.e.\beta, \alpha.f.\beta)$.
\end{corollary}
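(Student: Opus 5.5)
The strategy mirrors the proof of Corollary \ref{cunamb}. We reduce the statement to Lemma \ref{unambp} by showing that the pair of edges $(\alpha.e.\beta,\alpha.f.\beta)$ recovers both the critical pair $(e,f)$, and hence $\sigma$, and the words $\alpha,\beta$.

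So suppose $\alpha'.\sigma'.\beta'$ is another translate, with $\sigma'\in\mathbf{p}$ arising from a resolvable critical pair $(e',f')$, and assume
$$\alpha'.e'.\beta'=\alpha.e.\beta \quad\text{and}\quad \alpha'.f'.\beta'=\alpha.f.\beta.$$
The goal is to show that $\alpha=\alpha'$, $\beta=\beta'$ and $(e',f')=(e,f)$. Since $\mathbf{p}$ is in one-to-one correspondence with $\mathfrak{p}$, this gives $\sigma=\sigma'$. Lemma \ref{unambp} then yields $\alpha.\sigma.\beta=\alpha'.\sigma'.\beta'$.

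The key steps come from the case analysis already carried out in the proof of Lemma \ref{unambp}. That argument only ever used the two equations $\alpha.e.\beta=\alpha'.e'.\beta'$ and $\alpha.f.\beta=\alpha'.f'.\beta'$; it never used the remaining edges of the combinatorial boundaries. So we write the edges as quadruples and compare them component by component, in three cases.

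\emph{Mixed case.} Suppose $(e,f)$ is of inclusion type, with $e=(u,r,1,v)$, $f=(\lambda,s,1,\lambda)$, while $(e',f')$ is of overlapping type, with $e'=(\lambda,r',1,v')$, $f'=(u',s',1,\lambda)$ and $u',v'\neq\lambda$. Comparing first components gives $\alpha u=\alpha'$ and $\alpha=\alpha'u'$. Hence $\alpha=\alpha uu'$, which is impossible in the free monoid $F$ since $u'\neq\lambda$. The symmetric mixed case is excluded in the same way.

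\emph{Both of inclusion type.} Comparing the $f$-edges gives $\alpha=\alpha'$, $s=s'$ and $\beta=\beta'$. The $e$-edges then give $r=r'$, and after cancellation $u=u'$ and $v=v'$.

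\emph{Both of overlapping type.} The $e$-edges give $\alpha=\alpha'$ and $r=r'$. The $f$-edges give $\beta=\beta'$, $s=s'$ and $\alpha u=\alpha'u'$. Cancellation in $F$ then gives $u=u'$ and $v=v'$.

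The only subtle point is labelling. A pair $(e,f)$ in $\mathfrak{p}$ is ordered, and the two edges could in principle be listed as (left reduced, right reduced) in either order, or as (reduced-on-both-sides, other) in either order. Corollary \ref{cunambp} compares the ordered pair $(\alpha.e.\beta,\alpha.f.\beta)$, so I would fix the convention used in Definition \ref{critpairs}: the inclusion-type pair is normalised with $f$ the both-sides-reduced edge, and the overlapping pair with $e$ left reduced. If both orderings are allowed in $\mathfrak{p}$, the same computation handles the swapped cases, since the first and last coordinates of a reduced edge still pin down $\alpha$ and $\beta$.

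I expect the main obstacle to be this bookkeeping about which member of the pair is reduced on which side. The cancellations themselves are immediate in the free monoid $F$.
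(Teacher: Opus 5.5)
Your proposal is correct and follows the paper's proof. The paper likewise observes that the case analysis in the proof of Lemma \ref{unambp} uses only the equalities $\alpha.e.\beta=\alpha'.e'.\beta'$ and $\alpha.f.\beta=\alpha'.f'.\beta'$, which give $\alpha=\alpha'$, $\beta=\beta'$, $e=e'$ and $f=f'$, hence $\alpha.\sigma.\beta=\alpha'.\sigma'.\beta'$. You write the three cases out explicitly and add a remark on the ordering of the pair, which the paper leaves implicit through the normalisation in Definition \ref{critpairs}.
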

\begin{proof}
If $\alpha'.\sigma'.\beta'$ is the translate of another $\sigma' \in \mathbf{p}$ which arises from the resolution of the critical pair $(e',f')$ such that $\alpha.e.\beta=\alpha'.e'.\beta'$ and $\alpha.f.\beta=\alpha'.f'.\beta'$, then from the proof of Lemma \ref{unambp} we see that $\alpha=\alpha'$, $\beta=\beta'$, $e=e'$ and $f=f'$. This shows that $\alpha.\sigma.\beta=\alpha'.\sigma'.\beta'$.
\end{proof}
\begin{notation} \label{nsi}
In \cite{SES} it is introduced the following notation. If $\alpha.\sigma.\beta$ is a translate of some $\sigma \in \mathbf{p}$ which arises from the resolution of a critical pair $(e,f)$ of positive edges with the same initial $\eta$, then the 2-cell $\alpha.\sigma.\beta$ is denoted by $[\alpha \eta \beta, (\alpha.e.\beta, \alpha.f.\beta)]$. The above corollary implies that this notation is unambiguous.
\end{notation}
Finally we observe that no 2-cell can be at the same time square and arising from another set $\mathbf{p}$ of closed paths as the following shows.
\begin{lemma} \label{sqneqp}
\begin{itemize}
\item[(i)] The set of square 2-cells and $F.\mathbf{p}.F$ are disjoint whenever $\mathbf{p}$ arises from the resolutions of a set $\mathfrak{p}$ of critical pairs.
\item[(ii)] The set of square 2-cells and $F.\mathbf{p}.F$ are disjoint whenever $\mathbf{p}$ is a set of closed paths as in section \ref{gc}
\end{itemize}
\end{lemma}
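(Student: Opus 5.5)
The plan is to compare combinatorial boundaries. A 2-cell in either family is determined by the closed path along which it is attached. So it suffices to show that no square boundary $\delta[e,f]$ can equal a boundary $\delta(\alpha.\sigma.\beta)=\alpha.\delta\sigma.\beta$ in case (i), or a boundary $w.p.w'$ in case (ii). By Lemma \ref{unamb}, $\delta[e,f]$ is a well-defined closed path. It has length 4, and its edges are, in order, positive, positive, inverse, inverse: $e.\iota f\circ \tau e.f\circ (e.\tau f)^{-1}\circ(\iota e.f)^{-1}$. Its two first edges leaving the base vertex $\iota e.\iota f$ are $e.\iota f$ and $\iota e.f$. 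These two edges are \emph{disjoint}: the rule occurrence of the first lies inside the prefix $\iota e$, and that of the second inside the suffix $\iota f$.

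For (i), I would argue as follows. Every $\delta(\alpha.\sigma.\beta)$ has the shape $g_1\circ\dots\circ g_s\circ\tilde g_r^{-1}\circ\dots\circ\tilde g_1^{-1}$, starting at $\alpha\eta\beta$. Its first edge is $\alpha.e'.\beta$ and its last edge is $(\alpha.f'.\beta)^{-1}$, where $(e',f')$ is a critical pair.

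Suppose $\delta[e,f]=\delta(\alpha.\sigma.\beta)$ as closed paths. Matching lengths and the positive/inverse sign patterns forces $s=r=2$, i.e.\ type (ii-3). It also gives
\[
\alpha.e'.\beta=e.\iota f,\qquad \alpha.f'.\beta=\iota e.f .
\]
The right-hand sides are disjoint edges from a common vertex. So $e'$ and $f'$ are translates of each other's ``context'' and have disjoint rule occurrences in $\eta$. This contradicts Definition \ref{critpairs}. In the inclusion case, $f'$ occupies all of $\eta$, so $e'$ lies inside $f'$. In the overlapping case, the pair is not disjoint by definition. To make ``disjoint'' precise I would write $e=(u,r,1,v)$, $f=(\bar u,\bar r,1,\bar v)$ and compare components as in the proof of Lemma \ref{unambp}. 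In the inclusion case, $\alpha.f'.\beta=(\alpha,s,1,\beta)$ would have to equal $(uav\bar u,\bar r,1,\bar v)$. Then $\alpha=uav\bar u$. But $\alpha.e'.\beta$ has left context $\alpha u'$, which must equal $u$, so $|\alpha|<|\alpha|$ since $a=r_{+1}\neq\lambda$ or $\bar u\neq\lambda$. The overlapping case is handled by an analogous length comparison.

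Here lies the main obstacle: the boundary of a 2-cell is only determined up to the choice of base point. Concretely, identical cells mean identical attaching loops as labelled, and the paper's conventions fix both the base point and the orientation of the loop. I would state this convention explicitly, following the remark that cells correspond 1-1 with loops in $\mathfrak{S}$. To be safe, I would also check the cyclic rotations of $\delta[e,f]$ and its inverse. Any rotation that starts with two positive edges and ends with two inverse edges starts at $\iota e.\iota f$ or at $\tau e.\tau f$. At the latter, the first edges are $(\tau e.f)^{-1}$, which are not positive, so only the given rotation survives. A degenerate case arises when an edge repeats, as in the Example, where $\tau e.f=e.\tau f$. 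Even then, the sign pattern rules out any other matching.

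For (ii), the hypothesis of Section \ref{gc} says that no $p\in\mathbf{p}$ is the combinatorial boundary of a square 2-cell. Suppose $w.p.w'=\delta[e,f]$. Then I would show that $p$ itself is a square boundary, which contradicts the hypothesis. The key point is that $w$ is a prefix and $w'$ a suffix of every vertex of the loop. Write $e=(u,r,1,v)$ and $f=(\bar u,\bar r,1,\bar v)$. The edge $e.\iota f$ has left context $u$, and $\iota e.f$ has right context $\bar v$. So $w$ is a prefix of $u$, say $u=w u_0$, and $w'$ is a suffix of $\bar v$, say $\bar v=\bar v_0 w'$. Set $e_0=(u_0,r,1,v)$ and $f_0=(\bar u,\bar r,1,\bar v_0)$. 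Then $w.\delta[e_0,f_0].w'=\delta[w.e_0,f_0.w']=\delta[e,f]$. Cancelling the translation, which is injective on paths, gives $p=\delta[e_0,f_0]$, the desired contradiction.
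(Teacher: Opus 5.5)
Your proposal is correct and is essentially the paper's argument. For (i), the paper likewise plays the disjoint pair $(e.\iota f,\iota e.f)$, which determines a square cell (Corollary \ref{cunamb}), against the non-disjoint critical pair $(\alpha.e'.\beta,\alpha.f'.\beta)$, which determines $\alpha.\sigma.\beta$ (Corollary \ref{cunambp}); your sign-pattern matching makes explicit the identification of these edges that the paper leaves tacit. For (ii), the paper strips off the translation in the same way to obtain $\delta p=\delta[e',f']$.

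One caveat, which the paper shares: your inclusion-case length count needs $r_{+1}\neq\lambda$, and the paper's definition of a rewriting system does not impose this. Without it, (i) actually fails. For example, with the rules $(\lambda,x)$ and $(x,\lambda)$, take $e=(\lambda,(\lambda,x),1,\lambda)$ and $f=(\lambda,(x,\lambda),1,\lambda)$. Then $\delta[e,f]$ is exactly a (ii-3) resolution loop of the inclusion-type critical pair $\bigl((\lambda,(\lambda,x),1,x),f\bigr)$, resolved through the common vertex $x$. So both arguments tacitly assume nonempty left-hand sides.
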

\begin{proof}
(i) Assume that there is some $u.\sigma.v \in F.\mathbf{p}.F$ which is square. If $\sigma$ is obtained by resolving a critical pair $(e,f)$, then by Lemma \ref{cunambp} the pair $(u.e.v, u.f.v)$ uniquely determines $u.\sigma.v$. Since this 2-cell is also square, let say $[g,h]$, then from Corollary \ref{cunamb} it is uniquely determined by the pair $(g.\iota(h), \iota(g).h)$ where the edges transform disjoint subwords of $\iota(g)\iota(h)$ unlike the edges of $(u.e.v, u.f.v)$. This is a contradiction.\\
(ii) Assume that for some $p \in \mathbf{p}$ and $u,v \in F$, $[u,p,v]=[e,f]$ where $e,f$ are positive edges. Letting $\delta p= e^{\varepsilon_{1}}_{1} \circ \dots \circ e^{\varepsilon_{n}}_{n}$, it follows that
$$u. e^{\varepsilon_{1}}_{1}.v \circ \dots \circ u.e^{\varepsilon_{n}}_{n}.v= e.\iota f \circ \tau e. f \circ e^{-1}.\tau f \circ \iota e. f^{-1},$$
consequently $n=4$ and 
\begin{align*}
u. e^{\varepsilon_{1}}_{1}.v&=e.\iota f\\
u. e^{\varepsilon_{2}}_{2}.v&=\tau e. f\\
u. e^{\varepsilon_{3}}_{3}.v&=e^{-1}.\tau f\\
u. e^{\varepsilon_{4}}_{4}.v&=\iota e. f^{-1}.
\end{align*}
The first two equation imply that there are positive edges $e'$ and $f'$ such that $e=u.e'$ and $f=f'.v$, hence 
\begin{align*}
u. e^{\varepsilon_{1}}_{1}.v&=u.e'.\iota f'.v\\
u. e^{\varepsilon_{2}}_{2}.v&=u.\tau e'. f'.v\\
u. e^{\varepsilon_{3}}_{3}.v&=u.e'^{-1}.\tau f'.v\\
u. e^{\varepsilon_{4}}_{4}.v&=u.\iota e'. f'^{-1}.v.
\end{align*}
It follows from these that $\delta p=\delta[e',f']$ which contradicts the assumption on $\mathbf{p}$ made in section \ref{gc}.
\end{proof}

We conclude this section by stating that the attachments to $\mathcal{D}(\cP)$ of 2-cells $[w,\sigma,w']$ with $\sigma$ a 2-cell obtained by resolving a critical pair from $\mathfrak{p}$ is just a special case of the attachments of general 2-cells discussed is section \ref{gc}.

\section{Adding 3-cells to $\mathcal{D}(\cP) \sqcup F. \mathbf{p}.F$} \label{3c}

The idea of extending the 2-complex $\mathcal{D}(\mathcal{P}) \sqcup F.\mathbf{p}.F$ of section \ref{gc} to a 3-complex $(\mathcal{D}(\mathcal{P}),\mathbf{p})$ by attaching 3-cells is adapted from \cite{EP-phd} (see also \cite{EP}); however, we have modified and streamlined several proofs to enhance clarity and rigour.

We write for short $\mathcal{D}(\mathcal{P}) \sqcup F.\mathbf{p}.F=\Delta$. Under the assumption that the presentation $\mathcal{P}$ is numerable, it follows that $\Delta$ is a numerable CW-complex, hence from Theorem \ref{numcw} the cartesian product $\Delta \times \Delta$ is a CW-complex whose cells are pairs $(a_{1},a_{2})$ where $a_{1},a_{2}$ are cells in $\Delta$. The characteristic map $\Phi_{(a_{1}, a_{2})}$ is the product $\Phi_{a_{1}} \times \Phi_{a_{2}}$ of the corresponding characteristic maps.

Let $\mathcal{K}_{3}$ be the 3-skeleton of $\Delta \times \Delta$ which is made of pairs $(x_{1}, x_{2})$ such that there are cells $c_{1}, c_{2} \in \Delta$ such that $\text{dim}c_{1}+\text{dim}c_{2}\leq 3$ and $x_{1} \in c_{1}$, $x_{2} \in c_{2}$. We will define an equivalence relation $\sim$ on $\mathcal{K}_{3}$ and prove that $\sim$ is a cellular equivalence with the property that $\mathcal{K}_{3}/\sim$ is a CW-complex of dimension 3 whose 2-skeleton is $\Delta$. To achieve this we need to do some preparatory work. We introduce first the following notation. We define for every $\sigma_{1}, \sigma_{2} \in \Delta$ with $0\leq \text{dim}\sigma_{1}+ \text{dim}\sigma_{2}\leq 2$
$$\sigma_{1} \otimes \sigma_{2}=\begin{cases}
	uv & \text{ if } \sigma_{1}=u \in F, \sigma_{2}=v \in F \\
	u. \sigma_{2} & \text{ if }  \sigma_{1}=u\in F, \text{dim}\sigma_{2}\geq 1\\
	\sigma_{1}.v & \text{ if } \text{dim}\sigma_{1}\geq 1, \sigma_{2}=v \in F\\
	[e,f] & \text{ if } \sigma_{1}=e, \sigma_{2}=f
\end{cases}$$

\begin{lemma} \label{ascotens}
For every $\sigma_{1}, \sigma_{2}, \sigma_{3} \in \mathcal{D}(\cP) \sqcup F. \mathbf{p}.F$ such that $\text{dim}\sigma_{1}+\text{dim}\sigma_{2}+\text{dim}\sigma_{3} \leq 2$ we have that $(\sigma_{1}\otimes \sigma_{2})\otimes \sigma_{3}= \sigma_{1}\otimes (\sigma_{2} \otimes \sigma_{3})$. In particular, for every $\sigma_{1}, \sigma_{2} \in \mathcal{D}(\cP) \sqcup F. \mathbf{p}.F$ such that $\text{dim}\sigma_{1}+\text{dim}\sigma_{2} \leq 2$ and for every $u,v \in F$, $(\sigma_{1} \otimes u) \otimes (v \otimes \sigma_{2})= \sigma_{1} \otimes (uv) \otimes \sigma_{2}$.
\end{lemma}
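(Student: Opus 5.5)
The plan is a case analysis on the dimension vector $(\dim\sigma_{1},\dim\sigma_{2},\dim\sigma_{3})$. Since the total dimension is at most $2$, at most two of the $\sigma_i$ have positive dimension, and at least one is a vertex $u\in F$. Each tensor product occurring on either side has total dimension at most $2$, so $\otimes$ is defined at each stage. Moreover, by the definition, the product of two cells of dimensions $d_1,d_2$ has dimension $d_1+d_2$: $uv$ is a vertex, $u.\sigma$ and $\sigma.v$ keep the dimension of $\sigma$, and $[e,f]$ is a 2-cell. This fact tells us which clause of the definition applies at the second step on each side.

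\emph{Case with all three vertices.} Here the claim is associativity in the free monoid $F$: $(uv)w=u(vw)$.

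\emph{Case with exactly one cell $\sigma$ of positive dimension.} The two remaining entries are words, and the claim reduces to the compatibility identities of the bi-action:
\[
(uv).\sigma=u.(v.\sigma),\qquad
(u.\sigma).v=u.(\sigma.v),\qquad
\sigma.(uv)=(\sigma.u).v .
\]
For edges these follow from the combinatorial definition $z.(w,r,\varepsilon,w').z'=(zw,r,\varepsilon,w'z')$ and associativity in $F$. For square 2-cells they follow from $z.[e,f].z'=[z.e,f.z']$ together with the edge case. For cells in $F.\mathbf{p}.F$ they follow from $z.[w,p,w'].z'=[zw,p,w'z']$, which by (\ref{unqexp}) is well defined.

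\emph{Case with two edges $e,f$ and a vertex.} There are three positions for the vertex.
\begin{itemize}
\item $(u,e,f)$: we need $u.[e,f]=[u.e,f]$. This is the definition of the action on square cells.
\item $(e,f,v)$: we need $[e,f].v=[e,f.v]$. This is again the definition.
\item $(e,u,f)$: the left side is $(e\otimes u)\otimes f=[e.u,f]$ and the right side is $e\otimes(u\otimes f)=[e,u.f]$. Their equality is exactly the first half of Lemma \ref{unamb}, namely $[e.w,f]=[e,w.f]$.
\end{itemize}

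The second statement comes next. Apply associativity twice: $(\sigma_1\otimes u)\otimes(v\otimes\sigma_2)=\sigma_1\otimes(u\otimes(v\otimes\sigma_2))=\sigma_1\otimes((uv)\otimes\sigma_2)$, where the last step uses $u\otimes(v\otimes\sigma_2)=(uv)\otimes\sigma_2$ from the one-cell case. Associativity also shows that the triple product $\sigma_1\otimes(uv)\otimes\sigma_2$ is unambiguous.

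The only nontrivial point is the middle-vertex case $(e,u,f)$, which rests on Lemma \ref{unamb}. The rest is bookkeeping. The one thing to check carefully is that the translations of cells in $F.\mathbf{p}.F$ and of square cells really are actions, meaning they are compatible with the monoid product. This holds because both are defined via concatenation of words in the first and last coordinates.
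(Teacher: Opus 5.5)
Your proposal is correct and follows essentially the same route as the paper. Both arguments split into cases by the dimensions of the $\sigma_i$. The all-vertex case is associativity in $F$, and the single positive-dimensional cell cases are compatibility of the bi-action. The only real content is the middle-vertex case $(e,u,f)$, where $[e.u,f]=[e,u.f]$ comes from Lemma \ref{unamb}; the second assertion then follows by applying associativity twice, exactly as in the paper.
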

\begin{proof}
If each $\sigma_{i} \in F$, then the equality comes from the associativity of the product in $F$. When only one of the $\sigma_{i}$'s is of dimension 1 and the remaining two are of dimension 0, the proof is easy. We will present here only one the the three cases, when $\sigma_{2}=e$ is a 1-cell and $\sigma_{1}=u, \sigma_{3}=v \in F$. In this case
\begin{align*}
(\sigma_{1}\otimes \sigma_{2})\otimes \sigma_{3}&= (u \otimes e) \otimes v\\
&= (u.e).v\\
&= u.(e.v)\\
&= u \otimes (e \otimes v)\\
&=\sigma_{1}\otimes (\sigma_{2} \otimes \sigma_{3}).
\end{align*}
Now we check the case when $\text{dim}\sigma_{1}+\text{dim}\sigma_{2}+\text{dim}\sigma_{3} = 2$. There are six cases possible here. In the first three cases, only one of the cells is of dimension 2, and the remaining two are 0-cells. In the rest of the cases, two of the cells are of dimension 1 and the other one is a 0-cell. For convenience we will give here the proofs for one case in each group. Assume first that $\sigma_{2}$ is any 2-cell and $\sigma_{1}=u, \sigma_{3}=v \in F$, then
\begin{align*}
(\sigma_{1}\otimes \sigma_{2})\otimes \sigma_{3}&= (u \otimes \sigma_{2})\otimes v\\
&= (u.\sigma_{2}).v\\
&= u. (\sigma_{2}.v)\\
&=\sigma_{1}\otimes (\sigma_{2} \otimes \sigma_{3}).
\end{align*}
Finally let us consider the case when $\sigma_{1}=e$, $\sigma_{2} = u \in F$ and $\sigma_{3}=f$. In this case we have
\begin{align*}
(\sigma_{1}\otimes \sigma_{2})\otimes \sigma_{3}&= (e \otimes u)\otimes f\\
&= e.u \otimes f\\
&= [e.u, f]\\
&= [e,u.f] \tag{Lemma \ref{unamb}}\\
&= e \otimes (u \otimes f)\\
&=\sigma_{1}\otimes (\sigma_{2} \otimes \sigma_{3}).
\end{align*}
The second assertion of the Lemma follows easily from the first one. Indeed,
\begin{align*}
(\sigma_{1} \otimes u) \otimes (v \otimes \sigma_{2})&= \sigma_{1} \otimes (u \otimes (v \otimes \sigma_{2}))\\
&= \sigma_{1} \otimes ((u \otimes v) \otimes \sigma_{2})\\
&= \sigma_{1} \otimes (uv \otimes \sigma_{2})\\
&=\sigma_{1} \otimes (uv) \otimes \sigma_{2}.
\end{align*}
\end{proof}

\begin{definition} \label{prl}
We say that two cells $(a_{1},a_{2}),  (b_{1},b_{2}) \in \mathcal{K}_{3}$ are parallel, denoted by $(a_{1},a_{2}) \parallel  (b_{1},b_{2})$, if and only if there are cells $\sigma_{1}, \sigma_{2}, \sigma_{3} \in \Delta$ such that $0\leq \text{dim}\sigma_{1}+ \text{dim}\sigma_{2}\leq 2$,  $0\leq \text{dim}\sigma_{2}+ \text{dim}\sigma_{3}\leq 2$, $0\leq \text{dim}\sigma_{1}+ \text{dim}\sigma_{2}+ \text{dim}\sigma_{3}\leq 3$ and
$$(a_{1},a_{2})=(\sigma_{1}\otimes \sigma_{2}, \sigma_{3}) \text{ and } (b_{1}, b_{2})=(\sigma_{1}, \sigma_{2} \otimes \sigma_{3}),$$
or
$$(a_{1}, a_{2})=(\sigma_{1}, \sigma_{2} \otimes \sigma_{3}) \text{ and } (b_{1},b_{2})=(\sigma_{1}\otimes \sigma_{2}, \sigma_{3}).$$
\end{definition}
\begin{remark} \label{d3}
Since our main interest is the nature of the 3-cells that we are adding to the complex, it is useful to describe parallel 3-cells of $\mathcal{K}_{3}$. If $(f, \sigma')$ is a 3-cell with $f$ a positive edge and $\sigma'$ a 2-cell which is expressed as $\sigma'=v.\sigma$ where $v \in F$ and $\sigma$ a 2-cell, then $(f,v.\sigma)\parallel (f.v, \sigma)$. Similarly, $(\sigma.v, f) \parallel (\sigma, v.f)$ for every positive edge $f$ and every 2-cell $\sigma$. Another instance of parallel cells of dimension 3 is when each $\sigma_{i}$ of Definition \ref{prl} is a positive edge. So $([e,f], g) \parallel (e, [f,g])$ for every $e,f,g$ positive edges.
\end{remark}
\begin{remark} \label{d2}
If $(a_{1},a_{2})$ and $(b_{1},b_{2})$ are parallel cells of dimension at most 2, then it follows from Lemma \ref{ascotens} that $a_{1}\otimes a_{2}=b_{1} \otimes b_{2}$.
\end{remark}
\begin{lemma} \label{pl}
The relation $\parallel$ on the cell decomposition $E(\mathcal{K}_{3})$ of $\mathcal{K}_{3}$ is an equivalence relation.
\end{lemma}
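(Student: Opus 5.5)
Reflexivity and symmetry are immediate from Definition \ref{prl}; transitivity is where the work is. For reflexivity, given a cell $(a_{1},a_{2})$ of $\mathcal{K}_{3}$ I take $\sigma_{1}=a_{1}$, $\sigma_{2}=\lambda$ (the empty word, a $0$-cell) and $\sigma_{3}=a_{2}$. Then $\sigma_{1}\otimes\lambda=a_{1}$ and $\lambda\otimes\sigma_{3}=a_{2}$, by the definition of $\otimes$ and since translation by $\lambda$ is the identity. The dimension constraints reduce to $\text{dim}\,a_{1}\leq 2$, $\text{dim}\,a_{2}\leq 2$ and $\text{dim}\,a_{1}+\text{dim}\,a_{2}\leq 3$. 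These hold because $\Delta$ is $2$-dimensional and $(a_{1},a_{2})\in\mathcal{K}_{3}$. Symmetry holds because the definition lists both orders explicitly.

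For transitivity, suppose $(a_{1},a_{2})\parallel(b_{1},b_{2})$ via $(\sigma_{1},\sigma_{2},\sigma_{3})$ and $(b_{1},b_{2})\parallel(c_{1},c_{2})$ via $(\tau_{1},\tau_{2},\tau_{3})$. Up to symmetry there are four cases, according to which side of each relation carries the $\otimes$ on the left. In each case the middle cell $(b_{1},b_{2})$ gives a coincidence: either $\sigma_{1}=\tau_{1}$ together with $\sigma_{2}\otimes\sigma_{3}=\tau_{2}\otimes\tau_{3}$, or $\sigma_{1}\otimes\sigma_{2}=\tau_{1}\otimes\tau_{2}$ together with $\sigma_{3}=\tau_{3}$, or a mixed equality such as $\sigma_{1}=\tau_{1}\otimes\tau_{2}$ together with $\sigma_{2}\otimes\sigma_{3}=\tau_{3}$. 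The mixed cases are handled directly by Lemma \ref{ascotens}: $(a_{1},a_{2})$ and $(c_{1},c_{2})$ are then related through the triple $(\tau_{1},\tau_{2}\otimes\sigma_{2},\sigma_{3})$ (or its mirror), after checking the dimension bounds.

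The essential step is therefore a \emph{factorization lemma}. If $x\otimes y=x'\otimes y'$ with all factors cells of $\Delta$ and total dimension at most $2$, then there is a word $w$ with $x'=x\otimes w$ and $y=w\otimes y'$, or symmetrically $x=x'\otimes w$ and $y'=w\otimes y$. I would prove this by cases on the dimensions.
\begin{itemize}
\item When both sides are words, it is unique factorization in the free monoid $F$.
\item When one factor is a $1$-cell, I write the edge as $(u,r,1,v)$ and compare components.
\item When a factor is a $2$-cell of $F.\mathbf{p}.F$, I use the uniqueness of the expression (\ref{unqexp}) and Lemma \ref{unambp}.
\item When the product is a square cell $[e,f]=[e',f']$, Lemma \ref{unamb} supplies exactly the required $w$.
\item A square cell cannot equal $u.\sigma.v$ with $\sigma\in\mathbf{p}$, by Lemma \ref{sqneqp}, so mixed coincidences of that kind do not occur.
\end{itemize}

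With the factorization lemma, consider the case $\sigma_{1}=\tau_{1}$ and $\sigma_{2}\otimes\sigma_{3}=\tau_{2}\otimes\tau_{3}$, and say $\tau_{2}=\sigma_{2}\otimes w$ and $\sigma_{3}=w\otimes\tau_{3}$. Then $a=(\sigma_{1}\otimes\sigma_{2},\,w\otimes\tau_{3})$ and $c=(\sigma_{1}\otimes\sigma_{2}\otimes w,\,\tau_{3})$, so $a\parallel c$ via the triple $(\sigma_{1}\otimes\sigma_{2},w,\tau_{3})$, using associativity from Lemma \ref{ascotens}. The other cases are analogous. The remaining bookkeeping is to check that the new triple satisfies the dimension inequalities of Definition \ref{prl}. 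This follows because $w$ has dimension $0$ and the dimensions of the original cells bound everything.

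I expect the main obstacle to be the factorization lemma in the square-cell case. There $[e,f]$ arises from $\sigma_{2}=e$, $\sigma_{3}=f$, while $\tau_{2}$, $\tau_{3}$ might be, for instance, $e.w$ and a $0$-cell. I would settle this by writing every cell in the normal form $u.c.v$ with $c$ left and right reduced (for edges), or $[e,f]$ with $e$ left reduced and $f$ right reduced (Lemma \ref{bmsq} and its corollary, Lemma \ref{uf}). I would then read off all possible factorizations from that normal form.
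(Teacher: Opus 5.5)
Your overall plan follows the paper's proof. Reflexivity uses $\sigma_{2}=\lambda$ and symmetry comes from the definition. The ``mixed'' coincidence is settled by Lemma \ref{ascotens} with the triple $(\tau_{1},\tau_{2}\otimes\sigma_{2},\sigma_{3})$, which is the paper's case (ii). The coincidence $\sigma_{2}\otimes\sigma_{3}=\tau_{2}\otimes\tau_{3}$ is settled by a case analysis on normal forms, which is the paper's case (iii).

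\textbf{The gap.} Your factorization lemma, with $w$ required to be a \emph{word}, is false whenever $\dim x\neq\dim x'$. Take nonempty $z,z'\in F$, a rule $r$, and put $e=(z,r,1,\lambda)$ and $f=(\lambda,r,1,z')$. Then $e\otimes z'=(z,r,1,z')=z\otimes f$. For dimension reasons, neither $z=e\otimes w$ nor $e=z\otimes w$ can hold for a word $w$. The actual common factor is the edge $g=(\lambda,r,1,\lambda)$, with $e=z\otimes g$ and $f=g\otimes z'$.

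This situation really arises in transitivity. For any positive edge $h$ you have $([h,e],z')\parallel(h,e.z')=(h,z.f)\parallel(h.z,f)$. Every triple witnessing $([h,e],z')\parallel(h.z,f)$ must have a $1$-dimensional middle term, because the first components have dimensions $2$ and $1$. Similarly, $u.\sigma.v\otimes z'=z\otimes u'.\sigma.v'$ forces a $2$-cell of $F.\mathbf{p}.F$ in the middle. A square cell split once as (word, square) and once as (square, word) forces a square cell in the middle. These are exactly the paper's subcases with $\dim\sigma_{2}\neq\dim\alpha_{2}$, for instance $\dim\sigma_{2}=\dim\alpha_{3}=1$, or $\sigma_{2}=u.\sigma.v$ with $\alpha_{3}=u'.\sigma'.v'$. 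In each of them the new triple has an edge or a $2$-cell as its middle term.

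\textbf{The fix.} Allow $w$ to be an arbitrary cell of $\Delta$, of dimension $|\dim x-\dim x'|$. Your normal-form method proves this version directly. For a square cell, use the invariants $u_{1},e_{1},v_{1}u_{2},e_{2},v_{2}$ from Lemma \ref{uf}, and let $w$ be whatever lies between the two split points.

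The dimension bookkeeping must then be redone without assuming $\dim w=0$. Since $\otimes$ adds dimensions, $\tau_{1}=\sigma_{1}$ and $\tau_{2}=\sigma_{2}\otimes w$ give $\dim\sigma_{1}+\dim\sigma_{2}+\dim w=\dim\tau_{1}+\dim\tau_{2}\le 2$. This bound is what you need for two things:
\begin{itemize}
\item the associativity $(\sigma_{1}\otimes\sigma_{2})\otimes w=\sigma_{1}\otimes(\sigma_{2}\otimes w)$;
\item the first constraint of Definition \ref{prl} for the triple $(\sigma_{1}\otimes\sigma_{2},w,\tau_{3})$.
\end{itemize}
Moreover $\dim w+\dim\tau_{3}=\dim\sigma_{3}\le 2$, and the total dimension equals $\dim\tau_{1}+\dim\tau_{2}+\dim\tau_{3}\le 3$. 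With these corrections your argument goes through.
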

\begin{proof}
To prove that $\parallel$ is reflexive, we let $(\sigma_{1}, \sigma_{3})$ be any cell in $\mathcal{K}_{3}$. We can represent the cell in two forms
$$(\sigma_{1}\otimes \lambda, \sigma_{3})=(\sigma_{1}, \sigma_{3})=(\sigma_{1}, \lambda \otimes \sigma_{3}),$$
and then taking $\sigma_{2}=\lambda$ we see that $(\sigma_{1}, \sigma_{3}) \parallel (\sigma_{1}, \sigma_{3})$. That $\parallel$ is symmetric follows directly from the definition. Now we prove transitivity. Let $(a_{1},a_{2}) \parallel (b_{1}, b_{2})$ and $(b_{1},b_{2}) \parallel (c_{1},c_{2})$. We will make the proof when $(a_{1},a_{2})=(\sigma_{1} \otimes \sigma_{2}, \sigma_{3})$ and $(b_{1}, b_{2})=(\sigma_{1}, \sigma_{2} \otimes \sigma_{3})$. The proof when $(a_{1},a_{2})$ and $(b_{1},b_{2})$ switch roles is similar and is omitted. There are three possibilities for $(b_{1},b_{2})$ and $(c_{1},c_{2})$ to be parallel.
\begin{itemize}
\item[(i)] $(b_{1},b_{2})=(c_{1},c_{2})$. In this case the transitivity is obvious.
\item[(ii)] $(b_{1},b_{2})\neq (c_{1},c_{2})$ where $(b_{1},b_{2})=(\alpha_{1} \otimes \alpha_{2}, \alpha_{3})$ and $(c_{1},c_{2})=(\alpha_{1} , \alpha_{2} \otimes  \alpha_{3})$ with $\alpha_{2}\neq \lambda$. It follows that $\sigma_{1}=\alpha_{1} \otimes \alpha_{2}$ and $\alpha_{3}=\sigma_{2} \otimes \sigma_{3}$. Since $\text{dim}(\sigma_{1} \otimes \sigma_{2}) \leq 2$, it follows that $\text{dim}((\alpha_{1} \otimes \alpha_{2})\otimes \sigma_{2}) \leq 2$ consequently 
\begin{equation} \label{asoc1}
(\alpha_{1} \otimes \alpha_{2})\otimes \sigma_{2}= \alpha_{1} \otimes (\alpha_{2} \otimes \sigma_{2}).
\end{equation}
Also, since $\text{dim}(\alpha_{2} \otimes \alpha_{3})\leq 2$, it follows that $\text{dim}(\alpha_{2} \otimes (\sigma_{2} \otimes \sigma_{3}))\leq 2$ and then
\begin{equation} \label{asoc2}
(\alpha_{2} \otimes \sigma_{2})\otimes \sigma_{3}= \alpha_{2} \otimes (\sigma_{2} \otimes \sigma_{3}).
\end{equation}
Then we have
\begin{align*}
(a_{1},a_{2})&=(\sigma_{1} \otimes \sigma_{2}, \sigma_{3})\\
&= ((\alpha_{1} \otimes \alpha_{2}) \otimes \sigma_{2}, \sigma_{3}) \\
& = (\alpha_{1} \otimes (\alpha_{2} \otimes \sigma_{2}) , \sigma_{3}) \tag{by (\ref{asoc1})}\\
& \parallel (\alpha_{1}, (\alpha_{2} \otimes \sigma_{2})  \otimes \sigma_{3}) \\
&= (\alpha_{1}, \alpha_{2} \otimes (\sigma_{2}  \otimes \sigma_{3}) \tag{by (\ref{asoc2})}\\
&= (\alpha_{1}   ,\alpha_{2} \otimes \alpha_{3}) \\
&=(c_{1}, c_{2}).
\end{align*}
\item[(iii)] $(b_{1},b_{2})\neq (c_{1},c_{2})$ and $(b_{1},b_{2})=(\alpha_{1}, \alpha_{2} \otimes \alpha_{3})$, hence $(c_{1},c_{2})=(\alpha_{1} \otimes \alpha_{2}, \alpha_{3})$. It follows that $\alpha_{1}=\sigma_{1}$ and that $\sigma_{2} \otimes \sigma_{3}=\alpha_{2} \otimes \alpha_{3}$. There are three cases for $\text{dim}(\sigma_{2} \otimes \sigma_{3})$.
\begin{itemize}
\item[(1)] $\text{dim}(\sigma_{2} \otimes \sigma_{3})=0$. In this case $\sigma_{2}=u_{2}, \sigma_{3}=u_{3}, \alpha_{2}=v_{2}, \alpha_{3}=v_{3} \in F$. So we have
$$u_{2}u_{3}=v_{2}v_{3}.$$ If $u_{2}$ is a prefix of $v_{2}$, then $v_{2}=u_{2}w$ for some $w \in F$. It follows from this that $u_{3}=wv_{3}$. Now we see that
\begin{align*}
(c_{1},c_{2})&=(\alpha_{1} \otimes \alpha_{2}, \alpha_{3})\\
&= (\sigma_{1} \otimes u_{2}\otimes w, v_{3})\\
& \parallel (\sigma_{1} \otimes u_{2}, w \otimes v_{3})\\
&= (\sigma_{1} \otimes \sigma_{2}, \sigma_{3})\\
&= (a_{1}, a_{2}).
\end{align*}
If $v_{2}$ is a prefix of $u_{2}$, then for some $w \in F$ we have that $u_{2}=v_{2}w$ and $wu_{3}=v_{3}$. It follows that
\begin{align*}
(a_{1}, a_{2})&=(\sigma_{1} \otimes \sigma_{2}, \sigma_{3})\\
&=(\alpha_{1} \otimes v_{2} \otimes w, u_{3})\\
& \parallel (\alpha_{1} \otimes \alpha_{2}, wu_{3})\\
&= (\alpha_{1} \otimes \alpha_{2}, \alpha_{3})\\
&= (c_{1},c_{2}).
\end{align*}
\item[(2)] $\text{dim}(\sigma_{2} \otimes \sigma_{3})=1$. Assume that $\text{dim}\sigma_{2}=\text{dim}\alpha_{2}=1$, then $\sigma_{2}=u_{2}.e.u'_{2}$ where $e$ is a positive edge which is left and right reduced and $u_{2}, u'_{2} \in F$, and similarly $\alpha_{2}=v_{2}.f. v'_{2}$ where $f$ is a left and right reduced positive edge and $v_{2},v'_{2} \in F$. Also we have that $\sigma_{3}=u_{3} \in F$ and $\alpha_{3}=v_{3} \in F$. Our condition now writes
$$u_{2}.e.u'_{2} u_{3}= v_{2}.f.v'_{2}v_{3}$$
which implies that $u_{2}=v_{2}$, $e=f$ and that $u'_{2}u_{3}=v'_{2}v_{3}$. If $v'_{2}$ is a prefix of $u'_{2}$ then for some $w \in F$, $u'_{2}=v'_{2}w$ and $wu_{3}=v_{3}$. We then have
\begin{align*}
(a_{1}, a_{2})&=(\sigma_{1} \otimes \sigma_{2}, \sigma_{3})\\
&= (\alpha_{1} \otimes u_{2}.e.u'_{2}, u_{3})\\
&= (\alpha_{1} \otimes u_{2}.e.v'_{2}w, u_{3})\\
& \parallel (\alpha_{1} \otimes u_{2}.e.v'_{2}, wu_{3})\\
&= (\alpha_{1} \otimes v_{2}.f.v'_{2}, wu_{3})\\
&= (\alpha_{1} \otimes \alpha_{2}, \alpha_{3})\\
&=(c_{1},c_{2}).
\end{align*}
If $u'_{2}$ is a prefix of $v'_{2}$, then for some $w \in F$, $u'_{2}w=v'_{2}$ and $wv_{3}=u_{3}$. We have that
\begin{align*}
(c_{1},c_{2})&= (\alpha_{1} \otimes \alpha_{2}, \alpha_{3})\\
&= (\sigma_{1} \otimes v_{2}.f.v'_{2}, v_{3})\\
&= (\sigma_{1} \otimes v_{2}.f.u'_{2}w, v_{3})\\
&\parallel (\sigma_{1} \otimes v_{2}.f.u'_{2}, wv_{3})\\
&= (\sigma_{1} \otimes u_{2}.e.u'_{2}, wv_{3})\\
&= (\sigma_{1} \otimes \sigma_{2}, \sigma_{3})\\
&= (a_{1},a_{2}).
\end{align*}
Assume now that $\text{dim}\sigma_{2}=\text{dim}\alpha_{3}=1$, hence $\alpha_{2}, \sigma_{3} \in F$. We can now write $\sigma_{2}=u_{2}.e.u'_{2}$, $\sigma_{3}=u_{3} \in F$ and $\alpha_{3}=v_{3}.f.v'_{3}$. Here edges $e,f$ are left and right reduced. With these notations we have
$$u_{2}.e.u'_{2}u_{3}=\alpha_{2} v_{3}.f. v'_{3}.$$
It follows that $e=f$, $u'_{2}u_{3}=v'_{3}$ and $u_{2}=\alpha_{2}v_{3}$, consequently
\begin{align*}
(a_{1}, a_{2})&=(\sigma_{1} \otimes \sigma_{2}, \sigma_{3})\\
&= (\alpha_{1} \otimes u_{2}.e.u'_{2}, u_{3})\\
&= (\alpha_{1} \otimes \alpha_{2} v_{3} . f. u'_{2}, u_{3})\\
&\parallel (\alpha_{1} \otimes \alpha_{2}, v_{3}. f. u'_{2}u_{3})\\
&= (\alpha_{1} \otimes \alpha_{2}, v_{3}. f. v'_{3})\\
&= (\alpha_{1} \otimes \alpha_{2}, \alpha_{3})\\
&=(c_{1},c_{2}).
\end{align*}
Assume that $\text{dim}\sigma_{3}=\text{dim}\alpha_{2}=1$, hence $\alpha_{3}, \sigma_{2} \in F$. Let $\sigma_{3}=u_{3}.e.u'_{3}$ and $\alpha_{2}=v_{2}.f.v'_{2}$ with $e,f$ being left and right reduced. So in this case we have that
$$\sigma_{2}u_{3}.e.u'_{3}=v_{2}.f.v'_{2}\alpha_{3},$$
from which it follows that
$$\sigma_{2}u_{3}=v_{2}, u'_{3}=v'_{2}\alpha_{3}, e=f.$$ 
Now we have that
\begin{align*}
(c_{1},c_{2})&=(\alpha_{1} \otimes \alpha_{2}, \alpha_{3})\\
&= (\sigma_{1} \otimes v_{2}.f.v'_{2}, \alpha_{3})\\
&= (\sigma_{1} \otimes \sigma_{2} u_{3}.e. v'_{2}, \alpha_{3})\\
& \parallel (\sigma_{1} \otimes \sigma_{2} , u_{3}.e. v'_{2}\alpha_{3})\\
&= (\sigma_{1} \otimes \sigma_{2} , u_{3}.e. u'_{3})\\
&= (\sigma_{1} \otimes \sigma_{2} , \sigma_{3})\\
&= (a_{1},a_{2}).
\end{align*}
Finally assume that $\text{dim}\sigma_{3}=\text{dim}\alpha_{3}=1$, hence $\alpha_{2}, \sigma_{2} \in F$. Let $\sigma_{3}= u_{3}.e.u'_{3}$ and $\alpha_{3}=v_{3}.f.v'_{3}$ where $e$ and $f$ are left and right reduced, then
$$\sigma_{2}u_{3}.e.u'_{3}= \alpha_{2}v_{3}.f.v'_{3},$$
which implies that
$$e=f, \sigma_{2}u_{3}=\alpha_{2}v_{3}, u'_{3}=v'_{3}.$$
We check here the case when $\alpha_{2}$ is a prefix of $\sigma_{2}$ because the dual case has a similar proof. Under this assumption it follows that for some $w \in F$, $\sigma_{2}=\alpha_{2}w$ and $wu_{3}=v_{3}$. These imply that
\begin{align*}
(a_{1}, a_{2})&=(\sigma_{1} \otimes \sigma_{2}, \sigma_{3})\\
&= (\alpha_{1} \otimes \alpha_{2}w, \sigma_{3})\\
& \parallel (\alpha_{1} \otimes \alpha_{2}, w u_{3}.e.u'_{3})\\
&= (\alpha_{1} \otimes \alpha_{2}, v_{3}.f.v'_{3})\\
&= (\alpha_{1} \otimes \alpha_{2}, \alpha_{3})\\
&= (c_{1},c_{2}).
\end{align*}
\item[(3)] $\text{dim}(\sigma_{2} \otimes \sigma_{3})=2=\text{dim}(\alpha_{2} \otimes \alpha_{3})$. Assume in the first case that 
$$\sigma_{2}=u_{2}.e_{2}.u'_{2}, \sigma_{3}=u_{3}.e_{3}.u'_{3}, \alpha_{2}=v_{2}.f_{2}.v'_{2}, \alpha_{3}=v_{3}.f_{3}.v'_{3},$$
where as before the edges here are left and right reduced. Since $\sigma_{2} \otimes \sigma_{3}=\alpha_{2} \otimes \alpha_{3}$, then
$$u_{2}.e_{2}.u'_{2} \otimes u_{3}.e_{3}.u'_{3}=v_{2}.f_{2}.v'_{2}\otimes v_{3}.f_{3}.v'_{3}.$$
Lemma \ref{uf} implies that
$$e_{2}=f_{2}, e_{3}=f_{3}, u_{2}=v_{2}, u'_{3}=v'_{3}, u'_{2}u_{3}=v'_{2}v_{3}.$$
If $v'_{2}$ is a prefix of $u'_{2}$, then for some $w \in F$, $u'_{2}=v'_{2}w$ and $wu_{3}=v_{3}$. Now we have that
\begin{align*}
(a_{1}, a_{2})&=(\sigma_{1} \otimes \sigma_{2}, \sigma_{3})\\
&= (\alpha_{1} \otimes u_{2}.e_{2}.u'_{2}, u_{3}.e_{3}.u'_{3})\\
&= (\alpha_{1} \otimes v_{2}.f_{2}.v'_{2}w, u_{3}.e_{3}.u'_{3})\\
& \parallel (\alpha_{1} \otimes v_{2}.f_{2}.v'_{2}, wu_{3}.e_{3}.u'_{3})\\
&= (\alpha_{1} \otimes \alpha_{2}, \alpha_{3})\\
&=(c_{1},c_{2}).
\end{align*}
In the case when $u'_{2}$ is a prefix of $v'_{2}$, then $v'_{2}=u'_{2}w$ for some $w \in F$, and $wv_{3}=u_{3}$. It follows that
\begin{align*}
(c_{1},c_{2})&=(\alpha_{1} \otimes \alpha_{2}, \alpha_{3})\\
&= (\sigma_{1} \otimes v_{2}.f_{2}.v'_{2}, v_{3}.f_{3}.v'_{3})\\
&= (\sigma_{1} \otimes u_{2}.e_{2}.u'_{2}w, v_{3}.f_{3}.v'_{3})\\
& \parallel (\sigma_{1} \otimes u_{2}.e_{2}.u'_{2}, wv_{3}.f_{3}.v'_{3})\\
&= (\sigma_{1} \otimes \sigma_{2}, \sigma_{3})\\
&=(a_{1},a_{2}).
\end{align*}
Now we see the case when $\text{dim}\sigma_{2}=\text{dim}\sigma_{3}=1$, $\text{dim}\alpha_{2}=0$, and $\text{dim}\alpha_{3}=2$. Under these conditions, due to Lemma \ref{sqneqp} we may assume that 
$$\sigma_{2}=u_{2}.e_{2}.u'_{2}, \sigma_{3}=u_{3}.e_{3}.u'_{3}, \alpha_{3}= v_{3}.f_{2}.v'_{3} \otimes w_{3}.f_{3}. w'_{3},$$
where all edges here are left and right reduced. It follows that
$$u_{2}.e_{2}.u'_{2} \otimes u_{3}.e_{3}.u'_{3}= \alpha_{2} \otimes v_{3}.f_{2}.v'_{3} \otimes w_{3}.f_{3}. w'_{3},$$
therefore from Lemma \ref{uf} we have
$$e_{2}=f_{2}, e_{3}=f_{3}, u_{2}=\alpha_{2}v_{3}, u'_{2}u_{3}=v'_{3}w_{3}, u'_{3}=w'_{3}.$$
Now we see that
\begin{align*}
(a_{1},a_{2})&=(\sigma_{1} \otimes \sigma_{2}, \sigma_{3})\\
&= (\alpha_{1} \otimes u_{2}.e_{2}.u'_{2}, u_{3}.e_{3}.u'_{3})\\
&= (\alpha_{1} \otimes \alpha_{2}v_{3}.f_{2}.u'_{2}, u_{3}.e_{3}.u'_{3})\\
& \parallel (\alpha_{1} \otimes \alpha_{2}, v_{3}f_{2}.u'_{2} \otimes u_{3}.e_{3}.u'_{3})\\
&= (\alpha_{1} \otimes \alpha_{2}, v_{3}f_{2}.v'_{3} \otimes w_{3}.f_{3}.w'_{3})\\
&= (\alpha_{1} \otimes \alpha_{2}, \alpha_{3})\\
&=(c_{1},c_{2}).
\end{align*}
The case when $\text{dim}\alpha_{2}=\text{dim}\alpha_{3}=1$, $\text{dim}\sigma_{2}=0$, $\text{dim}\sigma_{3}=2$ is dual to the above and is omitted. 

If $\text{dim}\sigma_{2}=\text{dim}\alpha_{2}=2$ where both are square 2-cells, and $\sigma_{3}, \alpha_{3} \in F$, then 
$$\sigma_{2}= u_{2}.e_{2}.u'_{2} \otimes u_{3}.e_{3}.u'_{3}, \alpha_{2}=v_{2}.f_{2}.v'_{2} \otimes v_{3}.f_{3}.v'_{3},$$
where the 1-cells here are as always left and right reduced. Therefore we have
$$u_{2}.e_{2}.u'_{2} \otimes u_{3}.e_{3}.u'_{3} \otimes \sigma_{3}= v_{2}.f_{2}.v'_{2} \otimes v_{3}.f_{3}.v'_{3} \otimes \alpha_{3},$$
from which we get 
$$u_{2}=v_{2}, e_{2}=f_{2}, u'_{2}u_{3}=v'_{2}v_{3}, e_{3}=f_{3}, u'_{3}\sigma_{3}=v'_{3}\alpha_{3}.$$
If $v'_{3}$ is a prefix of $u'_{3}$, then for some $w \in F$, $u'_{3}=v'_{3}w$ and $w \sigma_{3}=\alpha_{3}$. It follows from these that
\begin{align*}
(a_{1},a_{2})&=(\sigma_{1} \otimes \sigma_{2}, \sigma_{3})\\
&= (\alpha_{1} \otimes u_{2}.e_{2}.u'_{2} \otimes u_{3}.e_{3}.u'_{3}, \sigma_{3})\\
&= (\alpha_{1} \otimes v_{2}.f_{2}.v'_{2} \otimes v_{3}.f_{3}.u'_{3}, \sigma_{3})\\
&= (\alpha_{1} \otimes v_{2}.f_{2}.v'_{2} \otimes v_{3}.f_{3}.v'_{3}w, \sigma_{3})\\
& \parallel (\alpha_{1} \otimes v_{2}.f_{2}.v'_{2} \otimes v_{3}.f_{3}.v'_{3}, w\sigma_{3})\\
&= (\alpha_{1} \otimes \alpha_{2}, \alpha_{3})\\
&=(c_{1},c_{2}).
\end{align*}
If $u'_{3}$ is a prefix of $v'_{3}$, then for some $w \in F$, $v'_{3}=u'_{3}w$ and $w \alpha_{3}=\sigma_{3}$. In this case we have that
\begin{align*}
(c_{1},c_{2})&= (\alpha_{1} \otimes \alpha_{2}, \alpha_{3})\\
&= (\alpha_{1} \otimes v_{2}.f_{2}.v'_{2} \otimes v_{3}.f_{3}.v'_{3}, \alpha_{3})\\
&= (\sigma_{1} \otimes u_{2}.e_{2}.u'_{2} \otimes u_{3}.e_{3}.u'_{3}w, \alpha_{3} )\\
& \parallel (\sigma_{1} \otimes u_{2}.e_{2}.u'_{2} \otimes u_{3}.e_{3}.u'_{3}, w\alpha_{3} )\\
&=(\sigma_{1} \otimes \sigma_{2}, \sigma_{3})\\
&=(a_{1},a_{2}).
\end{align*}
The dual case to the above is when $\text{dim}\sigma_{3}=\text{dim}\alpha_{3}=2$ and $\sigma_{2}, \alpha_{2} \in F$ and it is omitted. Now we see the case when $\text{dim}\sigma_{2}=2=\text{dim}\alpha_{3}$ where the 2-cells are square and $\alpha_{2}, \sigma_{3} \in F$. Let us assume that
$$\sigma_{2}=u_{2}.e_{2}.u'_{2} \otimes u_{2}.e_{3}.e'_{3}, \alpha_{3}= v_{2}.f_{2}.v'_{2} \otimes v_{3}.f_{3}.v'_{3},$$
with all edges being left and right reduced, then
$$u_{2}.e_{2}.u'_{2} \otimes u_{2}.e_{3}.e'_{3} \otimes \sigma_{3}=\alpha_{2} \otimes v_{2}.f_{2}.v'_{2} \otimes v_{3}.f_{3}.v'_{3},$$
which implies that
$$u_{2}=\alpha_{2}v_{2}, e_{2}=f_{2}, u'_{2}\otimes u_{3}=v'_{2} \otimes v_{3}, e_{3}=f_{3}, u'_{3}\otimes \sigma_{3}=v'_{3}.$$
From this we obtain
\begin{align*}
(a_{1},a_{2})&= (\sigma_{1} \otimes \sigma_{2}, \sigma_{3})\\
&=(\alpha_{1} \otimes u_{2}.e_{2}.u'_{2} \otimes u_{3}.e_{3}.u'_{3}, \sigma_{3})\\
&= (\alpha_{1} \otimes \alpha_{2} \otimes v_{2}. f_{2}. v'_{2} \otimes v_{3}.f_{3}.u'_{3}, \sigma_{3})\\
& \parallel (\alpha_{1} \otimes \alpha_{2}, v_{2}. f_{2}. v'_{2} \otimes v_{3}.f_{3}.u'_{3} \otimes \sigma_{3})\\
&= (\alpha_{1} \otimes \alpha_{2}, v_{2}. f_{2}. v'_{2} \otimes v_{3}.f_{3}.v_{3} )\\
&= (\alpha_{1} \otimes \alpha_{2}, \alpha_{3})\\
&=(c_{1},c_{2}).
\end{align*}
The case when $\text{dim}\alpha_{2}=2=\text{dim}\sigma_{3}$ where the 2-cells are square and $\sigma_{2}, \alpha_{3} \in F$, is dual to the above.

Assume now that $\sigma_{2}=u.\sigma.v\in F. \mathbf{p}.F$ and $\sigma_{3}\in F$. It follows from Lemma \ref{sqneqp} that $\alpha_{2} \otimes \alpha_{3}$ cannot be square. If $\alpha_{2}=u'.\sigma'.v' \in F. \mathbf{p}.F$, then $ \alpha_{3} \in F$ and
$$u.\sigma.v \sigma_{3}=\sigma_{2} \otimes \sigma_{3}=\alpha_{2} \otimes \alpha_{3}= u'.\sigma'.v'  \alpha_{3}.$$ 
Lemma \ref{unambp} and more generally (\ref{unqexp}) imply that
$$u=u', \sigma=\sigma', v\sigma_{3}=v'\alpha_{3}.$$
If $v'$ is a prefix of $v$, then $v=v'w$ for some $w \in F$ and $w \sigma_{3}=\alpha_{3}$. It follows that
\begin{align*}
(a_{1},a_{2})&= (\sigma_{1} \otimes \sigma_{2}, \sigma_{3})\\
&= (\alpha_{1} \otimes \sigma_{2}, \sigma_{3})\\
&= (\alpha_{1} \otimes u.\sigma. v, \sigma_{3})\\
&=(\alpha_{1} \otimes u'.\sigma'. v, \sigma_{3})\\
&=(\alpha_{1} \otimes u'.\sigma'. v'w, \sigma_{3})\\
& \parallel (\alpha_{1} \otimes u'.\sigma'. v', w\sigma_{3})\\
&= (\alpha_{1} \otimes \alpha_{2}, \alpha_{3})\\
&=(c_{1},c_{2}).
\end{align*}
If $v$ is a prefix of $v'$, then $v'=vw$ for some $w \in F$ and $w \alpha_{3}=\sigma_{3}$. It follows that
\begin{align*}
(c_{1},c_{2})&=  (\alpha_{1} \otimes \alpha_{2}, \alpha_{3})\\
&= (\sigma_{1} \otimes u'.\sigma'.v', \alpha_{3})\\
&= (\sigma_{1} \otimes u.\sigma.vw, \alpha_{3})\\
& \parallel (\sigma_{1} \otimes u.\sigma.v, w\alpha_{3})\\
&= (\sigma_{1} \otimes \sigma_{2}, \sigma_{3})\\
&= (a_{1},a_{2}).
\end{align*}
The other possible case is when $\sigma_{2}=u.\sigma. v$, $\sigma_{3} \in F$, $\alpha_{2} \in F$ and $\alpha_{3}=u'.\sigma'.v'$. It follows that
$$u=\alpha_{2}u', \sigma=\sigma', v\sigma_{3}=v'.$$
Therefore we have
\begin{align*}
(a_{1},a_{2})&= (\sigma_{1} \otimes \sigma_{2}, \sigma_{3})\\
&= (\alpha_{1} \otimes u.\sigma.v , \sigma_{3})\\
&= (\alpha_{1} \otimes \alpha_{2}u'.\sigma'.v , \sigma_{3})\\
& \parallel (\alpha_{1} \otimes \alpha_{2}, u'\sigma'. v \sigma_{3})\\
&=(\alpha_{1} \otimes \alpha_{2}, u'\sigma'. v ')\\
&= (\alpha_{1} \otimes \alpha_{2}, \alpha_{3})\\
&=(c_{1},c_{2}).
\end{align*}
The two remaining cases when $\sigma_{2} \in F$, $\sigma_{3} \in F.\mathbf{p}.F$ and either $\alpha_{2} \in F$ and $\alpha_{3} \in F. \mathbf{p}.F$, or $\alpha_{2} \in F. \mathbf{p}.F$ and $\alpha_{3} \in F$, are dual to the above and the corresponding proofs are omitted.
\end{itemize}

\end{itemize}
\end{proof}

From now and on, the characteristic map of a cell $a$ of $\mathcal{D}(\cP) \sqcup F. \mathbf{p}.F$ will be denoted by $\Phi_{a}$. The standard definition of the characteristic map 
$$\Phi_{(a_{1}, a_{2})}: I^{\text{dim}a_{1}} \times I^{\text{dim}a_{2}} \rightarrow \mathcal{K}_{3} $$ 
of a cell $(a_{1}, a_{2})$ in $\mathcal{K}_{3}$ is the product $\Phi_{a_{1}} \times \Phi_{a_{2}}$ where
$$\Phi_{a_{i}}: I_{a_{i}}^{\text{dim}a_{i}} \rightarrow \mathcal{D}(\cP) \sqcup F. \mathbf{p}.F \text{ with } i=1,2,$$
is the characteristic map of $a_{i}$. However, we can redefine the characteristic map by altering its domain to $I^{\text{dim}(a_{1}, a_{2})}$, which is homeomorphic to $I^{\text{dim}a_{1}} \times I^{\text{dim}a_{2}}$, but without affecting its values and the continuity in the following fashion. We first let $I^{0}=\{\ast\}$ where $\ast$ is just a symbol.
\begin{itemize}
\item[(i)] If $\text{dim} (a_{1}, a_{2})=0$, which means that both $a_{1}$ and $a_{2}$ are 0-cells, then we define
$$\Phi_{(a_{1},a_{2})}: I^{0} \rightarrow \mathcal{K}_{3} $$
by
$$\Phi_{(a_{1},a_{2})}(\ast)=(a_{1}, a_{2}).$$
\item[(ii)] If $\text{dim} (a_{1}, a_{2})=1$ there are two possible cases.
\begin{itemize}
\item[(ii-1)]
$a_{1}=e$ is a 1-cell and $a_{2} \in F$, then define
$$\Phi_{(e,a_{2})}: I^{1} \rightarrow \mathcal{K}_{3} $$
by
$$\Phi_{(e,a_{2})}(t)=(\Phi_{e}(t), a_{2}).$$
\item[(ii-2)] If $a_{1} \in F$ and $a_{2}=f$ is a 1-cell, then define
$$\Phi_{(a_{1},f)}: I^{1} \rightarrow \mathcal{K}_{3} $$
by
$$\Phi_{(a_{1},f)}(t)=(a_{1}, \Phi_{f}(t)).$$
\end{itemize}
\item[(iii)] If $\text{dim} (a_{1}, a_{2})=2$, there are three possible cases. 
\begin{itemize}
\item[(iii-1)] Both $a_{1}=e$ and $a_{2}=f$ are 1-cells. In this case we define
$$\Phi_{(e,f)}: I^{2} \rightarrow \mathcal{K}_{3} $$
by
$$\Phi_{(e,f)}((t_{1},t_{2}))=(\Phi_{e}(t_{1}), \Phi_{f}(t_{2})).$$
\item[(iii-2)] $a_{1}=\sigma$ is a 2-cell and $a_{2}$ is a 0-cell. In this case we define
$$\Phi_{(\sigma,a_{2})}: I^{2} \rightarrow \mathcal{K}_{3} $$
by
$$\Phi_{(\sigma,a_{2})}((t_{1},t_{2}))=(\Phi_{\sigma}(t_{1},t_{2}), a_{2}).$$
\item[(iii-3)] $a_{1}$ is a 0-cell and $a_{2}=\sigma$ is a 2-cell. Then we define
$$\Phi_{(a_{1},\sigma)}: I^{2} \rightarrow \mathcal{K}_{3} $$
by
$$\Phi_{(a_{1},\sigma)}((t_{1},t_{2}))=(a_{1},\Phi_{\sigma}(t_{1},t_{2})).$$
\end{itemize}
\item[(iv)] If $\text{dim}(a_{1},a_{2})=3$. There are two possible cases here.
\begin{itemize}
\item[(iv-1)] $a_{1}=f$ is a 1-cell and $a_{2}=\sigma$ is a 2-cell. In this case we define
$$\Phi_{(f, \sigma)}: I^{3} \rightarrow \mathcal{K}_{3}$$
by
$$\Phi_{(f, \sigma)}(t_{1},t_{2},t_{3})=(\Phi_{f}(t_{1}), \Phi_{\sigma}(t_{2},t_{3})).$$
\item[(iv-2)] $a_{1}=\sigma$ is a 2-cell and $a_{2}=f$ is a 1-cell. In this case we define
$$\Phi_{(\sigma,f)}: I^{3} \rightarrow \mathcal{K}_{3}$$
by
$$\Phi_{(\sigma, f)}(t_{1},t_{2},t_{3})=(\Phi_{\sigma}(t_{1},t_{2}), \Phi_{f}(t_{3})).$$
\end{itemize}
\end{itemize}
For every cell $(a_{1},a_{2}) \in \mathcal{K}_{3}$ with characteristic map $\Phi_{(a_{1},a_{2})}$ we will change the notation for $(a_{1}, a_{2})^{\circ}$ to $ \partial (a_{1}, a_{2})$ and then
$$(a_{1}, a_{2}) \setminus \partial (a_{1}, a_{2})= \Phi_{(a_{1},a_{2})} (I^{\text{dim}a_{1}+ \text{dim}a_{2}} \setminus \partial I^{\text{dim}a_{1}+ \text{dim}a_{2}})$$
will be the corresponding "open" cell. We note that if some pair $(x_{1},x_{2})$ is a point in $(a_{1}, a_{2}) \setminus \partial (a_{1}, a_{2})$, then there is a unique $u$ in the interior of $I^{\text{dim}a_{1}+ \text{dim}a_{2}}$ such that $\Phi_{(a_{1},a_{2})}(u)=(x_{1},x_{2})$. The element $u$ is written as $\Phi^{-1}_{(a_{1},a_{2})}(x_{1},x_{2})$.

\begin{definition} \label{relK}
Let $\sim$ be the relation on $\mathcal{K}_{3}$ containing all the pairs $((x_{1}, x_{2}), (y_{1},y_{2}))$ with the property that there are cells 
$(a_{1}, a_{2}), (b_{1},b_{2}) \in \mathcal{K}_{3}$ such that:
\begin{itemize}
\item[(i)] $(a_{1}, a_{2}) \parallel (b_{1},b_{2}).$
\item[(ii)] $(x_{1}, x_{2}) \in (a_{1}, a_{2}) \setminus \partial (a_{1}, a_{2})$, $(y_{1}, y_{2}) \in (b_{1},b_{2}) \setminus \partial (b_{1},b_{2})$,
\item[(iii)] $(y_{1},y_{2})=\Phi_{(b_{1},b_{2})}(\Phi^{-1}_{(a_{1},a_{2})}(x_{1},x_{2}))$.
\end{itemize}
\end{definition}

\begin{lemma}
The relation $\sim$ is an equivalence relation on $\mathcal{K}_{3}$.
\end{lemma}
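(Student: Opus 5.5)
The plan is to verify reflexivity, symmetry and transitivity directly from Definition \ref{relK}. The work is in transitivity, which will be reduced to Lemma \ref{pl}, the statement that $\parallel$ is an equivalence relation on the cells of $\mathcal{K}_{3}$. Throughout I use that the open cells $(a_{1},a_{2})\setminus\partial(a_{1},a_{2})$ partition $\mathcal{K}_{3}$, because $\Delta\times\Delta$ is a CW-complex by Theorem \ref{numcw}. I also use that each $\Phi_{(a_{1},a_{2})}$ restricts to a bijection from the interior of $I^{\dim a_{1}+\dim a_{2}}$ onto the open cell, so the map $\Phi^{-1}_{(a_{1},a_{2})}$ used in (iii) is well defined.

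\emph{Reflexivity.} Given $(x_{1},x_{2})\in\mathcal{K}_{3}$, let $(a_{1},a_{2})$ be the unique cell whose open part contains it. By Lemma \ref{pl}, $(a_{1},a_{2})\parallel(a_{1},a_{2})$, and condition (iii) with $b=a$ gives $(x_{1},x_{2})=\Phi_{a}(\Phi_{a}^{-1}(x_{1},x_{2}))$.

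\emph{Symmetry.} If $(x_{1},x_{2})\sim(y_{1},y_{2})$ via cells $a\parallel b$, then $b\parallel a$, since $\parallel$ is symmetric by definition. From $y=\Phi_{b}(\Phi_{a}^{-1}(x))$, with $\Phi_{a}^{-1}(x)$ an interior point and $\Phi_{b}$ injective on the interior, we get $\Phi_{b}^{-1}(y)=\Phi_{a}^{-1}(x)$, hence $x=\Phi_{a}(\Phi_{b}^{-1}(y))$.

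\emph{Transitivity.} Suppose $x\sim y$ via $a\parallel b$ and $y\sim z$ via $b'\parallel c$. Since $y$ lies in the open part of both $b$ and $b'$ and the open cells are disjoint, we get $b=b'$. Lemma \ref{pl} then gives $a\parallel c$. Moreover $\Phi_{b}^{-1}(y)=\Phi_{a}^{-1}(x)$, so
$$z=\Phi_{c}(\Phi_{b}^{-1}(y))=\Phi_{c}(\Phi_{a}^{-1}(x)).$$

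The main subtlety is whether parallel cells have domains of the same dimension, so that $\Phi_{b}(\Phi_{a}^{-1}(x))$ is even meaningful. I would check this first. From Definition \ref{prl}, both cells of a parallel pair have total dimension $\dim\sigma_{1}+\dim\sigma_{2}+\dim\sigma_{3}$. This uses $\dim(\sigma\otimes\tau)=\dim\sigma+\dim\tau$ for the tensor defined before Lemma \ref{ascotens}, which I would verify case by case: $[e,f]$ has dimension $2$, and translates preserve dimension. Once this holds, the domains $I^{n}$ agree, the reparametrised characteristic maps defined just before Definition \ref{relK} have a common domain, and every composite above is well defined.
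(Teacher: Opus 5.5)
Your proof is correct and follows the paper's route: reflexivity and symmetry are immediate, and transitivity comes from the transitivity of $\parallel$ (Lemma~\ref{pl}) together with the cancellation $\Phi_{b}^{-1}\circ\Phi_{b}=\mathrm{id}$ on interior points. You are a little more careful than the paper, which silently uses the same middle cell $b$ in both relations (your disjointness-of-open-cells remark justifies this) and does not check that parallel cells have equal dimension.
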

\begin{proof}
The relation $\sim$ is obviously reflexive and symmetric. To prove that $\sim$ is transitive we assume that $(x_{1}, x_{2}) \sim (y_{1},y_{2})$ and $(y_{1}, y_{2}) \sim (z_{1},z_{2})$, therefore there are cells $(a_{1}, a_{2}), (b_{1}, b_{2}), (c_{1}, c_{2})$ such that
$(a_{1}, a_{2}) \parallel (b_{1},b_{2})$, $(b_{1}, b_{2}) \parallel (c_{1},c_{2})$, $(x_{1}, x_{2}) \in (a_{1}, a_{2}) \setminus \partial (a_{1}, a_{2})$, $(y_{1}, y_{2}) \in (b_{1},b_{2}) \setminus \partial (b_{1},b_{2})$, $(z_{1}, z_{2}) \in (c_{1}, c_{2}) \setminus \partial (c_{1}, c_{2})$ and that
$$(y_{1},y_{2})=\Phi_{(b_{1},b_{2})}(\Phi^{-1}_{(a_{1},a_{2})}(x_{1},x_{2})), (z_{1},z_{2})=\Phi_{(c_{1},c_{2})}(\Phi^{-1}_{(b_{1},b_{2})}(y_{1},y_{2})).$$
From Lemma \ref{pl}, $(a_{1}, a_{2}) \parallel (c_{1},c_{2})$, so it remains to prove the condition (iii) of the definition. Indeed,
\begin{align*}
(z_{1},z_{2})&=\Phi_{(c_{1},c_{2})}(\Phi^{-1}_{(b_{1},b_{2})}(y_{1},y_{2}))\\
&=\Phi_{(c_{1},c_{2})}(\Phi^{-1}_{(b_{1},b_{2})}(\Phi_{(b_{1},b_{2})}(\Phi^{-1}_{(a_{1},a_{2})}(x_{1},x_{2})))\\
&= \Phi_{(c_{1},c_{2})}(\Phi^{-1}_{(a_{1},a_{2})}(x_{1},x_{2})).
\end{align*}
\end{proof}

\begin{lemma} \label{p-inj}
If $(x_{1},x_{2}), (y_{1},y_{2}) \in (a_{1}, a_{2}) \setminus \partial (a_{1}, a_{2})$ and $(x_{1},x_{2}) \sim (y_{1},y_{2})$, then $(x_{1},x_{2})=(y_{1},y_{2})$.
\end{lemma}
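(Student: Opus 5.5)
The plan is to unwind Definition \ref{relK} and reduce the claim to showing that a cell is parallel to itself only trivially, in the sense that the induced identification is the identity. Suppose $(x_{1},x_{2}) \sim (y_{1},y_{2})$ with both points in the open cell $(a_{1},a_{2}) \setminus \partial (a_{1},a_{2})$. By definition there are cells $(c_{1},c_{2}) \parallel (d_{1},d_{2})$ with $(x_{1},x_{2})$ interior to $(c_{1},c_{2})$, $(y_{1},y_{2})$ interior to $(d_{1},d_{2})$, and $(y_{1},y_{2})=\Phi_{(d_{1},d_{2})}(\Phi^{-1}_{(c_{1},c_{2})}(x_{1},x_{2}))$. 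Since the open cells of the product CW-complex $\Delta\times\Delta$ (Theorem \ref{numcw}) partition $\mathcal{K}_{3}$, we get $(c_{1},c_{2})=(a_{1},a_{2})=(d_{1},d_{2})$. Then $(y_{1},y_{2})=\Phi_{(a_{1},a_{2})}(\Phi^{-1}_{(a_{1},a_{2})}(x_{1},x_{2}))=(x_{1},x_{2})$, because $\Phi_{(a_{1},a_{2})}$ is a bijection from the open cube onto the open cell, so $\Phi^{-1}$ is a genuine inverse there.

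The two steps I would write out carefully are the following. First, the partition property: the open cells of $\Delta\times\Delta$ are exactly the products of open cells of $\Delta$. These are disjoint because the open cells of $\Delta$ are disjoint in each factor. Hence a point lies in the interior of a unique product cell. Second, the injectivity of $\Phi_{(a_{1},a_{2})}$ on the interior of $I^{\dim a_{1}+\dim a_{2}}$, which is what makes $\Phi^{-1}$ well defined. This follows factorwise, since each $\Phi_{a_{i}}$ is injective on the interior of its disk. For $1$-cells this is Remark \ref{hom1}; for $2$-cells and for $F.\mathbf{p}.F$-cells it holds because these are attached via pushouts, so $\nu$ is injective off the boundary circles. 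The reparametrisations in cases (i)--(iv) preceding Definition \ref{relK} are only coordinate regroupings, so they preserve this.

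I expect the main obstacle to be conceptual rather than computational. One must make sure the definition of $\sim$ really only ever relates interior points through a single chosen pair of parallel cells. In particular, the relation generated in Definition \ref{relK} must be taken as the set of such pairs, and we need the transitivity lemma already proved, so that no chain argument is required. If instead one reads $\sim$ as possibly involving a cell parallel to itself via some nontrivial decomposition $(\sigma_{1}\otimes\sigma_{2},\sigma_{3})=(\sigma_{1},\sigma_{2}\otimes\sigma_{3})$, the conclusion is still unaffected. The formula in (iii) depends only on the cells $(a_{1},a_{2})$ and $(b_{1},b_{2})$, not on the witnesses $\sigma_{i}$, so equality of the cells forces the map to be the identity.
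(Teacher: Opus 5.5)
Your argument is correct and is essentially the paper's proof. Both use the fact that open cells of $\Delta$ are disjoint, applied factorwise, to force the witnessing parallel cells $(c_{1},c_{2})$ and $(d_{1},d_{2})$ to coincide with $(a_{1},a_{2})$, after which $\Phi_{(a_{1},a_{2})}\circ\Phi^{-1}_{(a_{1},a_{2})}$ is the identity on the open cell. Your extra check that $\Phi_{(a_{1},a_{2})}$ is injective on the interior of the cube is something the paper simply builds into its convention just before Definition \ref{relK}.
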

\begin{proof}
If $(c_{1}, c_{2}) \setminus \partial (c_{1}, c_{2})$ and $(d_{1}, d_{2}) \setminus \partial (d_{1}, d_{2})$ are the open cells of Definition \ref{relK} such that $(x_{1},x_{2})\in(c_{1}, c_{2}) \setminus \partial (c_{1}, c_{2})$ and $(y_{1},y_{2})\in(d_{1}, d_{2}) \setminus \partial (d_{1}, d_{2})$, then since each point in $\mathcal{D}(\cP) \sqcup F. \mathbf{p}.F$ belongs to a unique open cell, it follows that for $i=1,2$,
$$c_{i}\setminus \partial c_{i}=a_{i} \setminus \partial a_{i}= d_{i} \setminus \partial d_{i},$$
which in turn implies that,
$$c_{i}=a_{i}=d_{i}.$$
Now we see that
\begin{align*}
(y_{1},y_{2})&=\Phi_{(d_{1},d_{2})}(\Phi^{-1}_{(c_{1},c_{2})}(x_{1},x_{2}))\\
&=\Phi_{(a_{1},a_{2})}(\Phi^{-1}_{(a_{1},a_{2})}(x_{1},x_{2}))\\
&=(x_{1},x_{2}),
\end{align*}
hence $(x_{1},x_{2})=(y_{1},y_{2})$.
\end{proof}

\begin{definition}
Denote by $(\mathcal{D}(\cP), \mathbf{p})$ the quotient space $\mathcal{K}_{3}/ \sim$ and let $p: \mathcal{K}_{3} \rightarrow \mathcal{K}_{3}/\sim$ be the quotient map. For every cell $(a_{1}, a_{2}) \in \mathcal{K}_{3}$, define $p((a_{1}, a_{2}) \setminus \partial (a_{1}, a_{2}))=\{p((x_{1},x_{2}))| (x_{1}, x_{2}) \in (a_{1}, a_{2}) \setminus \partial (a_{1}, a_{2})\}$.
\end{definition}

\begin{lemma} \label{p-par}
For every two cells $(a_{1}, a_{2}),  (b_{1}, b_{2})\in \mathcal{K}_{3}$, $p((a_{1}, a_{2}) \setminus \partial (a_{1}, a_{2}))=p((b_{1}, b_{2}) \setminus\partial (b_{1}, b_{2}))$ if and only if $(a_{1}, a_{2}) \parallel (b_{1}, b_{2})$.
\end{lemma}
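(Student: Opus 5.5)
The plan is to prove the two implications separately, using only Definition \ref{relK}, Lemma \ref{pl} (transitivity of $\parallel$) and the fact that the open cells of $\mathcal{K}_{3}$ partition it.

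\emph{Sufficiency.} Assume $(a_{1},a_{2}) \parallel (b_{1},b_{2})$. Parallel cells have the same dimension: in Definition \ref{prl} both cells have dimension $\text{dim}\sigma_{1}+\text{dim}\sigma_{2}+\text{dim}\sigma_{3}$, because $\otimes$ adds dimensions. So the two characteristic maps have the same domain $I^{n}$.

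1. Take $(x_{1},x_{2}) \in (a_{1},a_{2}) \setminus \partial (a_{1},a_{2})$ and put $u=\Phi^{-1}_{(a_{1},a_{2})}(x_{1},x_{2})$, which is an interior point of $I^{n}$.
2. Set $(y_{1},y_{2})=\Phi_{(b_{1},b_{2})}(u)$. This point lies in the open cell $(b_{1},b_{2}) \setminus \partial (b_{1},b_{2})$, and $\Phi^{-1}_{(b_{1},b_{2})}(y_{1},y_{2})=u$.
3. By Definition \ref{relK}, $(x_{1},x_{2}) \sim (y_{1},y_{2})$, so $p(x_{1},x_{2})=p(y_{1},y_{2})$. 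This gives $p((a_{1},a_{2}) \setminus \partial (a_{1},a_{2})) \subseteq p((b_{1},b_{2}) \setminus \partial (b_{1},b_{2}))$.
4. Since $\parallel$ is symmetric, the same argument gives the reverse inclusion.

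\emph{Necessity.} Assume the two images under $p$ coincide. Choose any point $(x_{1},x_{2})$ in the open cell $(a_{1},a_{2}) \setminus \partial (a_{1},a_{2})$; for example, the image of the centre of $I^{n}$. Then there is $(y_{1},y_{2}) \in (b_{1},b_{2}) \setminus \partial (b_{1},b_{2})$ with $(x_{1},x_{2}) \sim (y_{1},y_{2})$. By Definition \ref{relK} there are cells $(c_{1},c_{2}) \parallel (d_{1},d_{2})$ with
\begin{itemize}
\item $(x_{1},x_{2})$ in the open cell of $(c_{1},c_{2})$, and
\item $(y_{1},y_{2})$ in the open cell of $(d_{1},d_{2})$.
\end{itemize}
The open cells of $\mathcal{K}_{3}$ partition $\mathcal{K}_{3}$. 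Indeed, a pair lies in the open product cell $(a_{1}\setminus\partial a_{1}) \times (a_{2}\setminus \partial a_{2})$, and each coordinate lies in a unique open cell of $\Delta$; this is the argument already used in Lemma \ref{p-inj}. Hence $(c_{1},c_{2})=(a_{1},a_{2})$ and $(d_{1},d_{2})=(b_{1},b_{2})$, and therefore $(a_{1},a_{2}) \parallel (b_{1},b_{2})$.

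\emph{Main obstacle.} The only point needing care is the uniqueness of the open cell containing a given point of $\mathcal{K}_{3}$, since it is what identifies the witness cells $(c_{1},c_{2}),(d_{1},d_{2})$ with the given ones. I would justify it from the product CW-structure given by Theorem \ref{numcw}, exactly as in Lemma \ref{p-inj}.

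A second point is the characteristic maps in low dimensions. Composing $\Phi_{(b_{1},b_{2})}$ with $\Phi^{-1}_{(a_{1},a_{2})}$ requires the relabelled domains $I^{\text{dim}a_{1}+\text{dim}a_{2}}$ introduced before Definition \ref{relK} to coincide, and this is exactly the equality of dimensions noted in the sufficiency part. Finally, transitivity from Lemma \ref{pl} is not needed here, because $\sim$ is defined directly via a single parallel pair.
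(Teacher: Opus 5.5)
Your proof is correct and follows essentially the same route as the paper. Sufficiency transports an interior point via $\Phi_{(b_{1},b_{2})}\circ\Phi^{-1}_{(a_{1},a_{2})}$ and invokes Definition \ref{relK} directly; necessity picks an equivalent pair and identifies the witnessing cells. You make two steps explicit that the paper leaves implicit: the equality of dimensions, and the use of the partition into open cells.

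One small correction to your closing remark. The step from $p(x_{1},x_{2})=p(y_{1},y_{2})$ to a single witnessing parallel pair needs $\sim$ itself to be transitive. That is the unlabelled lemma following Definition \ref{relK}, whose proof is exactly where Lemma \ref{pl} is used, so Lemma \ref{pl} does enter, indirectly.
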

\begin{proof}
To prove that $(a_{1}, a_{2}) \parallel (b_{1}, b_{2})$ it is enough to prove that there are $(x_{1},x_{2}) \in (a_{1}, a_{2}) \setminus \partial (a_{1}, a_{2})$ and $(y_{1},y_{2}) \in (b_{1}, b_{2}) \setminus \partial (b_{1}, b_{2})$ such that $(x_{1},x_{2}) \sim (y_{1}, y_{2})$. For every element $(x_{1},x_{2}) \in (a_{1}, a_{2}) \setminus \partial (a_{1}, a_{2})$, since $p((x_{1},x_{2})) \in p((a_{1}, a_{2}) \setminus \partial (a_{1}, a_{2}))$, it follows from the given equality that there is some $p((y_{1},y_{2})) \in p((b_{1}, b_{2}) \setminus \partial (b_{1}, b_{2}))$ such that $p((x_{1},x_{2}))=p((y_{1},y_{2}))$, therefore $(x_{1},x_{2}) \sim (y_{1}, y_{2})$.

Conversely assume that 
$$a=(\sigma_{1}\otimes \sigma_{2}, \sigma_{3})\parallel (\sigma_{1}, \sigma_{2}\otimes \sigma_{3})=b.$$ 
The proof of the claim when $a$ and $b$ switch roles is analogous to the following and is omitted. Let $p(x_{1},x_{2}) \in p ((\sigma_{1}\otimes \sigma_{2}, \sigma_{3}) \setminus \partial (\sigma_{1}\otimes \sigma_{2}, \sigma_{3}))$ for some 
$$(x_{1},x_{2}) \in (\sigma_{1}\otimes \sigma_{2}, \sigma_{3}) \setminus \partial (\sigma_{1}\otimes \sigma_{2}, \sigma_{3})$$ 
and let 
$$u \in I^{\text{dim}a}   \setminus \partial I^{\text{dim}a}$$ 
such that $\Phi_{(\sigma_{1}\otimes \sigma_{2},\sigma_{3})}(u)=(x_{1},x_{2})$. Let 
$$(y_{1},y_{2})=\Phi_{(\sigma_{1},\sigma_{2}\otimes \sigma_{3})} (u) \in (\sigma_{1}, \sigma_{2}\otimes \sigma_{3}) \setminus \partial (\sigma_{1}, \sigma_{2}\otimes \sigma_{3}).$$ 
It follows from Definition \ref{relK} that $(x_{1},x_{2}) \sim (y_{1},y_{2})$, hence 
$$p(x_{1},x_{2})=p(y_{1},y_{2}) \in p((\sigma_{1}, \sigma_{2}\otimes \sigma_{3}) \setminus \partial (\sigma_{1}, \sigma_{2}\otimes \sigma_{3}))$$ 
which proves that 
$$p ((\sigma_{1}\otimes \sigma_{2}, \sigma_{3}) \setminus \partial (\sigma_{1}\otimes \sigma_{2}, \sigma_{3}))\subseteq p((\sigma_{1}, \sigma_{2}\otimes \sigma_{3}) \setminus \partial (\sigma_{1}, \sigma_{2}\otimes \sigma_{3})).$$ The reverse inclusion is proved in a symmetric fashion.
\end{proof}

\begin{lemma} \label{p-par-b-1}
\begin{itemize}
\item[(d-1)] For every positive edge (1-cell) $e \in \mathcal{D}(\cP) \sqcup F.\mathbf{p}.F$ and every $u \in F$, $p(u,e)=p(\lambda,u.e)$ and similarly $p(e, u)=p(e.u, \lambda)$. Also $p(e,\lambda)=p(\lambda,e)$.
\item[(d-2)] For every cell $\sigma \in \mathcal{D}(\cP) \sqcup F.\mathbf{p}.F$ of dimension 2 and every $u \in F$, $p(u,\sigma)=p(\lambda,u.\sigma)$ and similarly $p(\sigma, u)=p(\sigma.u, \lambda)$. Also $p(\sigma, \lambda)=p(\lambda,\sigma)$.
\item[(d-3)] For every pair $e,f$ of positive edges, $p(e,f)=p(\lambda,[e,f])=p([e,f],\lambda)$.
\item[(d-4)] For every pair $e,f$ of positive edges and every $u \in F$, $p(e.u,f)=p(e,u.f)$.
\item[(d-5)] For every $u,v,',v' \in F$ such that $uv=u'v'$, $p(u,v)=p(u',v')$.
\end{itemize}
\end{lemma}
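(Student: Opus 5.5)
Each item follows the same template. We exhibit the two cells in question as parallel in the sense of Definition \ref{prl}, i.e.\ as $(\sigma_{1}\otimes\sigma_{2},\sigma_{3})$ and $(\sigma_{1},\sigma_{2}\otimes\sigma_{3})$ for suitable cells $\sigma_{1},\sigma_{2},\sigma_{3}$ of $\Delta$ satisfying the dimension constraints. Lemma \ref{p-par} then gives equality of the images under $p$ of the open cells. Here $p(a_{1},a_{2})$ means the image of the closed cell $(a_{1},a_{2})$, so we must also pass to closures. We would use continuity of $p$ together with the fact that, for parallel cells, the open cells are identified pointwise through the same parameter $u\in I^{\dim a}$, via $\Phi_{(b_{1},b_{2})}\circ\Phi^{-1}_{(a_{1},a_{2})}$ (Definition \ref{relK}). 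Hence $p\circ\Phi_{(a_{1},a_{2})}$ and $p\circ\Phi_{(b_{1},b_{2})}$ agree on the interior of $I^{\dim a}$. By continuity and density of the interior they agree on all of $I^{\dim a}$, so the closed cells have equal images. This argument is done once, in general, and then applied to each item.

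For the individual items we choose the decompositions as follows. (d-1): take $\sigma_{1}=\lambda$, $\sigma_{2}=u$, $\sigma_{3}=e$. Then $(\lambda\otimes u,e)=(u,e)$ and $(\lambda,u\otimes e)=(\lambda,u.e)$. Dually take $(e,u,\lambda)$ for $p(e,u)=p(e.u,\lambda)$. For $p(e,\lambda)=p(\lambda,e)$ take $\sigma_{1}=\lambda$, $\sigma_{2}=e$, $\sigma_{3}=\lambda$. (d-2) is identical with the 2-cell $\sigma$ in place of $e$; the dimension conditions $\dim\sigma_{1}+\dim\sigma_{2}\le 2$, $\dim\sigma_{2}+\dim\sigma_{3}\le2$ and total $\le 3$ hold. (d-3): take $\sigma_{1}=e$, $\sigma_{2}=f$, $\sigma_{3}=\lambda$, which gives $([e,f],\lambda)\parallel(e,f\otimes\lambda)=(e,f)$. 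Similarly $(\lambda,e,f)$ gives $(\lambda\otimes e,f)=(e,f)\parallel(\lambda,[e,f])$. (d-4): take $(\sigma_{1},\sigma_{2},\sigma_{3})=(e,u,f)$, total dimension $2$. (d-5): if $u$ is a prefix of $u'$, write $u'=uw$, so $v=wv'$ by cancellation in $F$. Then $(u\otimes w,v')\parallel(u,w\otimes v')$, i.e.\ $(u',v')\parallel(u,v)$; the other prefix case is symmetric.

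The only step needing care is the passage from open cells to closed cells described above. For $0$-cells (d-5) nothing is needed, since the open cell is the point itself. For positive-dimensional cells we must check that $p\circ\Phi_{(a_{1},a_{2})}=p\circ\Phi_{(b_{1},b_{2})}$ on the interior with the same parametrisation. This holds by Definition \ref{relK}(iii) applied to $u$ and to $\Phi_{(a_{1},a_{2})}(u)$. Continuity of both sides on $I^{\dim a}$ finishes the argument, because $\mathcal{K}_{3}/\sim$ need not yet be known to be Hausdorff, yet two continuous maps agreeing on a dense set still have, by continuity, images contained in the closure of a common set. To avoid any Hausdorff issue, we would instead argue directly that $\Phi_{(a_{1},a_{2})}$ and $\Phi_{(b_{1},b_{2})}$ agree on the boundary after composing with $p$. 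Boundary points lie in lower-dimensional parallel cells, obtained by restricting each $\sigma_{i}$ to faces, and we would use induction on dimension, with (d-5) as the base case.
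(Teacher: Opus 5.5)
Your triples $(\sigma_1,\sigma_2,\sigma_3)$ are all valid, and the induction you end with does prove the lemma. Delete the continuity/density step, though. The quotient $\mathcal{K}_3/\sim$ is not yet known to be Hausdorff; that is Theorem \ref{embd}, whose proof uses this lemma and (\ref{crutial}). So agreement of $p\circ\Phi_{(a_1,a_2)}$ and $p\circ\Phi_{(b_1,b_2)}$ on the interior of $I^{n}$ does not extend to $\partial I^{n}$. Your fallback only shows that both closed images lie in $\overline{p((a_1,a_2)\setminus\partial(a_1,a_2))}$. That is a common upper bound, not the equality $p(a_1,a_2)=p(b_1,b_2)$. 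The boundary must be handled combinatorially, as in your last sentences.

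With that step removed, your route differs from the paper's in what is compared on the boundary. The paper works with sets. It writes $p(a)=p(a\setminus\partial a)\cup p(\partial a)$ and gets the open parts from Lemma \ref{p-par}. It then obtains $p(\partial a)=p(\partial b)$ by writing $\partial a$ as a union of closed lower-dimensional cells, e.g. $\partial(e,f)=(\iota e,f)\cup(\tau e,f)\cup(e,\iota f)\cup(e,\tau f)$, and matching them piece by piece with (d-1) and the $0$-cell case. No parametrisations need to be tracked.

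You prove the stronger pointwise identity $p\circ\Phi_{(a_1,a_2)}=p\circ\Phi_{(b_1,b_2)}$ on all of $I^{n}$. This costs checking that the boundary parametrisations match:
\begin{itemize}
\item for square cells, by (\ref{S1}) and (\ref{S2});
\item for $\sigma\in F.\mathbf{p}.F$, by checking that the attaching map of $u.\sigma$ is, segment by segment, the translate of that of $\sigma$, including the constant segments and the subdivision of $L_2$ and $L_4$. So \emph{restricting each $\sigma_i$ to faces} is not literal there.
\end{itemize}
Your inductive hypothesis should also cover arbitrary lower-dimensional parallel pairs. For example, the faces $(\iota(e)u,f)$ and $(\iota(e),u.f)$ arising in (d-4) are not literally of the form in (d-1).

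In exchange you obtain exactly the identity (\ref{crutial}). The paper proves that separately, by the same induction on dimension, in Proposition \ref{cer}(a)(iii); your version folds that step into this lemma.
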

\begin{proof}
First we note that for every cell $(a_{1},a_{2}) \in \mathcal{K}_{3}$,
$$p((a_{1},a_{2}) \setminus \partial (a_{1},a_{2}))=p(a_{1},a_{2}) \setminus p(\partial (a_{1},a_{2})).$$
This holds true since 
$$p((a_{1},a_{2}) \setminus \partial (a_{1},a_{2})) \cap p(\partial (a_{1},a_{2})) = \emptyset,$$
which follows from the fact that no points in the interior of a cell are identified with points from its boundary. The definition of $\sim$ allows identifications of points from open (parallel) cells of the same dimension. \\
(d-1) We prove the second claim and the third claim. Since $(e,u) \parallel (e.u,\lambda)$, then Lemma \ref{p-par} implies that $p((e,u)\setminus \partial (e,u))=p((e.u,\lambda) \setminus (e.u, \lambda))$. Let us prove that $p(\partial (e,u))=p(\partial (e.u,\lambda))$. This holds true since
\begin{align*}
p(\partial (e,u))&= p(\partial e \times u)\\
&= p\{(\iota(e),u), (\tau(e),u)\}\\
&= \{p(\iota(e),u), p(\tau(e),u)\}\\
&= \{ p(\lambda, \iota(e)u), p(\lambda, \tau(e)u)\} \tag{Lemma \ref{p-par}}\\
&=p \{ (\lambda, \iota(e)u), (\lambda, \tau(e)u)\}\\
&= p \{(\iota(e)u,\lambda), (\tau(e)u,\lambda)\} \tag{Lemma \ref{p-par}}\\
&= p (\partial (e.u,\lambda)).
\end{align*}
To check the third claim, we need to prove that $p(\partial (e,\lambda))=p(\partial (\lambda,e))$.
\begin{align*}
p(\partial (e,\lambda))&= p \{(\iota(e),\lambda), (\tau(e),\lambda)\}\\
&=  \{p(\iota(e),\lambda), p(\tau(e),\lambda)\}\\
&= \{p(\lambda, \iota(e)), p(\lambda, \tau(e))\} \tag{Lemma \ref{p-par}}\\
&= p\{(\lambda, \iota(e)), (\lambda, \tau(e))\}\\
&= p (\partial (\lambda,e)).
\end{align*}
(d-2) We prove the first claim and the third claim. Since $(u, \sigma) \parallel (\lambda,u.\sigma)$, then from Lemma \ref{p-par} we have that $p((u, \sigma) \setminus \partial (u, \sigma) )=p((\lambda,u.\sigma)\setminus \partial (\lambda,u.\sigma))$. It remains to prove that $p(\partial (u,\sigma))=p(\partial (\lambda,u.\sigma)$. This is true since
\begin{align*}
p(\partial (u,\sigma))&= p(\underset{e \in \partial \sigma}{\cup}(u,e))\\
&= \underset{e \in \partial \sigma}{\cup}p(u,e)\\
&=\underset{e \in \partial \sigma}{\cup}p(\lambda,u.e) \tag{by (d-1)}\\
&=\underset{u.e \in \partial u.\sigma}{\cup}p(\lambda,u.e)\\
&=p\left(\underset{u.e \in \partial u.\sigma}{\cup}(\lambda,u.e)\right)\\
&=p(\partial (\lambda,u.\sigma).
\end{align*}
Now we prove that $p(\partial (\sigma, \lambda))=p(\partial (\lambda, \sigma))$.
\begin{align*}
p(\partial (\sigma, \lambda))&=p ( \underset{e \in \partial \sigma}{\cup}(e,\lambda))\\
&= \underset{e \in \partial \sigma}{\cup}p(e,\lambda)\\
&=\underset{e \in \partial \sigma}{\cup}p(\lambda, e) \tag{by (d-1)}\\
&=p(\underset{e \in \partial \sigma}{\cup}(\lambda, e))\\
&= p (\partial (\lambda,\sigma)).
\end{align*}
(d-3) We prove the first equality. As before we need to check that $p(\partial (e,f))=p(\partial (\lambda,[e,f]))$. We see that
\begin{align*}
p(\partial (e,f))&=p ((\iota(e), f) \cup (\tau(e), f) \cup (e, \iota(f)) \cup (e,\tau(f)))\\
&= p (\iota(e), f) \cup p (\tau(e), f) \cup p (e, \iota(f)) \cup p (e,\tau(f)) \\
&=p (\lambda, \iota(e). f) \cup p (\lambda, \tau(e).f) \cup p (e.\iota(f),\lambda)\cup p (e.\tau(f),\lambda) \tag{by (d-1)}\\
&= p (\lambda, \iota(e). f) \cup p (\lambda, \tau(e).f) \cup p (\lambda, e.\iota(f)) \cup p (\lambda, e.\tau(f)) \tag{by (d-1)}\\
&= p( (\lambda, \iota(e). f) \cup (\lambda, \tau(e).f) \cup  (\lambda, e.\iota(f)) \cup  (\lambda, e.\tau(f)))\\
&= p (\partial (\lambda,[e,f])).
\end{align*}
(d-4) We first note that $(e.u,f) \parallel (e,u.f)$ and then Lemma \ref{p-par} implies that $p((e.u,f) \setminus \partial (e.u,f) )=p((e,u.f) \setminus \partial (e,u.f))$. Further we see that
\begin{align*}
p(\partial (e.u,f))&= p (\iota (e) u, f) \cup p(\tau (e) u, f) \cup p(e.u, \iota(f)) \cup p(e.u, \tau(f))\\
&= p(\iota(e),u.f) \cup p(\tau (e) , u.f) \cup p(e, u.\iota(f)) \cup p(e, u.\tau(f)) \tag{by (d-1)}\\
&= p(\partial (e,u.f)).
\end{align*}
(d-5) It is enough to see that $(u,v) \sim (u',v')$. Since $uv=u'v'$, then it follows easily that $(u,v) \parallel (u',v')$. The second condition of Definition \ref{relK} is obvious since 0-cells have no boundary, and the third conditions is also true since
$$\Phi_{(u',v')}(\Phi^{-1}_{(u,v)}(u,v))=\Phi_{(u',v')}(\ast)=(u',v').$$
\end{proof}

\begin{lemma} \label{p-par-b}
For every two cells $(a_{1}, a_{2}),  (b_{1}, b_{2})\in \mathcal{K}_{3}$, if $(a_{1}, a_{2}) \parallel (b_{1}, b_{2})$, then $p(a_{1}, a_{2})=p(b_{1}, b_{2})$.
\end{lemma}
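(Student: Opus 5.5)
The plan is to split each closed cell as
$$p(a_{1},a_{2})=p\bigl((a_{1},a_{2})\setminus\partial(a_{1},a_{2})\bigr)\cup p\bigl(\partial(a_{1},a_{2})\bigr).$$
Lemma \ref{p-par} already gives equality of the images of the open parts whenever $(a_{1},a_{2})\parallel(b_{1},b_{2})$. So it suffices to prove
$$p(\partial(a_{1},a_{2}))=p(\partial(b_{1},b_{2})).$$
I will argue by induction on $d=\text{dim}(a_{1},a_{2})$; note that parallel cells have equal dimension. The hypothesis is that the statement holds for all parallel pairs of dimension $<d$. Since the boundary of a product cell is a union of closed cells of lower dimension, it will be enough to match those lower cells up to parallelism.

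For $d=0$ there is no boundary, and the claim is Lemma \ref{p-par} (or (d-5) of Lemma \ref{p-par-b-1}). For the induction step, write $(a_{1},a_{2})=(\sigma_{1}\otimes\sigma_{2},\sigma_{3})$ and $(b_{1},b_{2})=(\sigma_{1},\sigma_{2}\otimes\sigma_{3})$ as in Definition \ref{prl}. The boundary of a product satisfies
$$\partial(a_{1},a_{2})=(\partial a_{1}\times a_{2})\cup(a_{1}\times\partial a_{2}).$$
The boundary of $\sigma_{1}\otimes\sigma_{2}$ should be the union of the cells $\tau\otimes\sigma_{2}$ and $\sigma_{1}\otimes\tau'$, where $\tau$ runs over the boundary cells of $\sigma_{1}$ and $\tau'$ over those of $\sigma_{2}$. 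This holds because translation by a word carries the boundary of a cell onto the boundary of the translate; that fact is equations (\ref{relch}) and (\ref{relch21}) together with (\ref{eqp}). Similarly, the boundary of $[e,f]=e\otimes f$ consists of the four edges $e\otimes\iota f$, $e\otimes\tau f$, $\iota e\otimes f$ and $\tau e\otimes f$.

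Hence $\partial(a_{1},a_{2})$ is a union of closed cells of three kinds: $(\tau_{1}\otimes\sigma_{2},\sigma_{3})$, $(\sigma_{1}\otimes\tau_{2},\sigma_{3})$ and $(\sigma_{1}\otimes\sigma_{2},\tau_{3})$, where each $\tau_{i}$ is a face of $\sigma_{i}$. Likewise $\partial(b_{1},b_{2})$ is the union of $(\tau_{1},\sigma_{2}\otimes\sigma_{3})$, $(\sigma_{1},\tau_{2}\otimes\sigma_{3})$ and $(\sigma_{1},\sigma_{2}\otimes\tau_{3})$. The faces correspond termwise, and each corresponding pair is parallel by Definition \ref{prl} applied to the triple with one entry replaced by a face. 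The dimension constraints are still satisfied because the dimensions have only decreased. The induction hypothesis then gives equality of the $p$-images of the corresponding closed faces, and taking unions yields the claim. The switched-roles case of Definition \ref{prl} is symmetric.

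The main obstacle is the step identifying the boundary of $\sigma_{1}\otimes\sigma_{2}$ with the union of $\tau\otimes\sigma_{2}$ and $\sigma_{1}\otimes\tau'$ when a 2-cell from $F.\mathbf{p}.F$ is translated. In that case the boundary is the union of the translated edges $u.e_{i}.v$ of $\delta[u,p,v]$, and I will take this from the construction of $\phi_{[w,p,w']}$ together with (\ref{relch21}). I will also check the degenerate cases where $\sigma_{2}=\lambda$ or faces coincide, which are harmless. Where convenient I will instead cite the explicit computations (d-1)--(d-4) of Lemma \ref{p-par-b-1}, which already treat the low-dimensional cases, and use the induction only for the 3-dimensional cells $(f,v.\sigma)\parallel(f.v,\sigma)$ and $([e,f],g)\parallel(e,[f,g])$ of Remark \ref{d3}. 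For those cells I will verify face by face, for example that $(\partial[e,f],g)$ is covered by parallels of $(e,\partial[f,g])$ together with $(e,f)$-type faces such as $(e\otimes\iota f,g)\parallel(e,\iota f\otimes g)$.
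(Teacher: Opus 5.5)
Your proof is correct. It rests on the same mechanism as the paper's (open parts via Lemma \ref{p-par}, then matching boundary faces with parallel lower-dimensional faces), but it is organized differently.

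\textbf{The paper's route.} It proceeds by cases.
\begin{itemize}
\item For the two shapes of parallel 3-cells listed in Remark \ref{d3}, it computes $p(\partial(\cdot))$ explicitly. It moves words across using Lemma \ref{p-par-b-1} and Lemma \ref{unamb}, e.g.\ $p(f,u.e_i)=p(\lambda,[f,u.e_i])=p(\lambda,[f.u,e_i])=p(f.u,e_i)$.
\item For cells of dimension at most $2$, it does no general face analysis. It uses Remark \ref{d2} to route both $p(a_1,a_2)$ and $p(b_1,b_2)$ through the normal form $p(\lambda,a_1\otimes a_2)$ given by Lemma \ref{p-par-b-1}. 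The proofs of that lemma are themselves face-by-face checks for special pairs.
\end{itemize}

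\textbf{Your route.} You isolate the Leibniz-type rule $\partial(\sigma_1\otimes\sigma_2)=(\partial\sigma_1\otimes\sigma_2)\cup(\sigma_1\otimes\partial\sigma_2)$. You then note that the faces of $(\sigma_1\otimes\sigma_2,\sigma_3)$ and of $(\sigma_1,\sigma_2\otimes\sigma_3)$ are indexed by the same faces $\tau_i$ of the $\sigma_i$. Corresponding faces are parallel directly by Definition \ref{prl}, applied to the triple with one $\sigma_i$ replaced by $\tau_i$. A single induction on dimension then finishes the proof.

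\textbf{Comparison.} Your route gains uniformity:
\begin{itemize}
\item there is no enumeration of the possible shapes of parallel cells;
\item Lemma \ref{unamb} and Remark \ref{d2} are not needed at this step;
\item parts (d-1)--(d-4) of Lemma \ref{p-par-b-1} become special cases rather than prerequisites.
\end{itemize}
The paper's route is more concrete about exactly which identifications occur, at the cost of many cases.

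\textbf{Two small remarks.}
\begin{itemize}
\item Your fallback of citing (d-1)--(d-4) in low dimension would only cover the specific pairs listed there. On its own it would require the paper's normal-form argument, but your induction makes it unnecessary.
\item Your illustrative sentence about $([e,f],g)$ is slightly garbled. The faces $(\iota e.f,g)$ and $(\tau e.f,g)$ match $(\iota e,[f,g])$ and $(\tau e,[f,g])$, which are faces of type $(\partial e,[f,g])$, not $(e,\partial[f,g])$. Your general face formula handles this automatically.
\end{itemize}
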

\begin{proof}
Let assume that $(a_{1}, a_{2}), (b_{1}, b_{2})$ are of dimension 3. Since from Lemma \ref{p-par} we have that $p((a_{1},a_{2})\setminus \partial (a_{1}, a_{2}))=p((a_{1},a_{2})\setminus \partial (b_{1}, b_{2}))$ it remains to prove that $p(\partial (a_{1}, a_{2}))=p(\partial (b_{1}, b_{2}))$. If $(a_{1}, a_{2})=(f,u.\sigma)$ where $f$ is a positive edge, $\sigma$ a 2-cell and $u \in F$, then $(b_{1}, b_{2})=(f.u,\sigma)$. We note first that $\partial (u.\sigma)=\underset{e \in \partial \sigma}{\cup} u.e$. It follows that
\begin{align*}
p(\partial (f, u.\sigma))&= p(\partial f \times u.\sigma \cup f \times \partial (u.\sigma)) \\
&= p(\iota(f), u.\sigma) \cup p(\tau(f), u.\sigma) \cup ( \underset{e_{i} \in \partial \sigma}{\cup} p((f,u.e_{i})))\\
&=p(\lambda, \iota(f).u.\sigma)) \cup p(\lambda, \tau(f) .u.\sigma) \cup ( \underset{e_{i} \in \partial \sigma}{\cup} p(\lambda,[f,u.e_{i}])) \tag{Lemma \ref{p-par-b-1}}\\
&=p(\lambda, \iota(f.u).\sigma)) \cup p(\lambda, \tau(f.u).\sigma) \cup ( \underset{e_{i} \in \partial \sigma}{\cup} p(\lambda,[f.u,e_{i}])) \tag{Lemma \ref{unamb}}\\
&= p(\iota(f.u),\sigma)) \cup p(\tau(f.u) ,\sigma) \cup ( \underset{e_{i} \in \partial \sigma}{\cup} p((f.u,e_{i}))) \tag{Lemma \ref{p-par-b-1}} \\
&=p(\partial (f.u) \times \sigma \cup f.u \times \partial \sigma)\\
&=p(\partial (f.u,\sigma)).
\end{align*}
The second case when $(a_{1}, a_{2})=(\sigma, u.f)$ and $(b_{1},b_{2})=(\sigma.u, f)$ if proved in a similar fashion. Now we prove the case when $(a_{1},a_{2})=([e,f],g)$ and $(b_{1},b_{2})=(e,[f,g])$. Indeed, from Lemma \ref{p-par-b-1} we have that

{\small
\begin{align*}
p(\partial ([e,f]&,g))= p(e.\iota(f), g) \cup p(\tau(e).f,g) \cup p(e.\tau(f),g) \cup p(\iota(e).f,g) \cup p([e,f], \iota(g)) \cup p([e,f], \tau(g))\\
&= p(e, \iota(f).g) \cup p(\tau(e), [f,g]) \cup p(e, \tau(f).g) \cup p(\iota(e), [f,g]) \cup p([e,f.\iota(g)],\lambda) \cup p([e, f.\tau{g}], \lambda)\\
&= p(e, \iota(f).g) \cup p(e, \tau(f).g) \cup p(e,f.\iota(g)) \cup p(e, f.\tau{g}) \cup p(\tau(e), [f,g]) \cup p(\iota(e), [f,g]) \\
&= p(\partial ([e,[f,g]])).
\end{align*}
}
Now let $(a_{1}, a_{2}), (b_{1}, b_{2})$ be parallel cells of dimension $\leq 2$, therefore $a_{1}\otimes a_{2}=b_{1} \otimes b_{2}$. There are three possible cases for $(a_{1},a_{2})$.\\
(i) $(a_{1},a_{2})=(u, \alpha)$ with $u \in F$ and $\alpha$ a cell of dimension $\leq 2$. There are three cases here for $(b_{1}, b_{2})$.\\
(i-1) $(b_{1}, b_{2})=(v, \beta)$ with $v \in F$ and $\beta$ a cell of dimension $\leq 2$. It follows that
\begin{align*}
p(u,\alpha)&= p(\lambda, u.\alpha) \tag{by Lemma \ref{p-par-b-1}}\\
&= p(\lambda,v.\beta) \tag{since $u.\alpha=u \otimes a=v \otimes b=v.\beta$}\\
&=p(v, \beta). \tag{by Lemma \ref{p-par-b-1}}
\end{align*} 
(i-2) $(b_{1}, b_{2})=(\beta, v)$ with $v \in F$ and $\beta$ a cell of dimension $\leq 2$. It follows that
\begin{align*}
p(u,\alpha)&= p(\lambda, u.\alpha) \tag{by Lemma \ref{p-par-b-1}}\\
&= p(\lambda,\beta.v) \tag{since $u.\alpha=u \otimes a=b \otimes v=\beta.v$}\\
&=p(\beta,v). \tag{by Lemma \ref{p-par-b-1}}
\end{align*}
(i-3) $(b_{1}, b_{2})=(e, f)$ where $e,f$ are positive edges. In this case we have
\begin{align*}
p(u,\alpha)&= p(\lambda, u.\alpha) \tag{by Lemma \ref{p-par-b-1}}\\
&= p(\lambda,[e,f]) \tag{since $u.\alpha=u \otimes \alpha=e \otimes f=[e,f]$}\\
&=p(e,f). \tag{by Lemma \ref{p-par-b-1}}
\end{align*}
(ii) $(a_{1},a_{2})=(\alpha, u)$ with $u \in F$ and $\alpha$ a cell of dimension $\leq 2$. The proof in this case is dual to that of (i).\\
(iii) $(a_{1},a_{2})=(e,f)$ where $e,f$ are positive edges. In this case the cell $(b_{1},b_{2})$ has to be of dimension 2, therefore there are three possible cases for $(b_{1},b_{2})$.\\
(iii-1) $(b_{1},b_{2})=(g,h)$ where $g,h$ are positive edges. It follows that
\begin{align*}
p(e,f)&= p (\lambda,[e,f]) \tag{by Lemma \ref{p-par-b-1}}\\
&= p(\lambda,[g,h]) \tag{since $[e,f]=e \otimes f=g \otimes h=[g,h]$}\\
&= p(g,h). \tag{by Lemma \ref{p-par-b-1}}
\end{align*}
(iii-2) $(b_{1},b_{2})=(u,\sigma)$ for some $u \in F$ and $\sigma$ a 2-cell. It follows that
\begin{align*}
p(e,f)&= p (\lambda,[e,f]) \tag{by Lemma \ref{p-par-b-1}}\\
&= p(\lambda,u.\sigma) \tag{since $[e,f]=e \otimes f=u \otimes \sigma=u.\sigma$}\\
&= p(u,\sigma). \tag{by Lemma \ref{p-par-b-1}}
\end{align*}
(iii-3) $(b_{1},b_{2})=(\sigma,u)$ for some $u \in F$ and $\sigma$ a 2-cell. This case is dual to (iii-2).
\end{proof}

\begin{proposition} \label{cer}
\begin{itemize}
\item[(a)] The relation $\sim$ is a cellular equivalence relation.
\item[(b)] The quotient space $\mathcal{K}_{3}/\sim$ is a cell complex whose cells are $p(a_{1},a_{2})$ for every $(a_{1},a_{2}) \in \mathcal{K}_{3}$.
\end{itemize}
\end{proposition}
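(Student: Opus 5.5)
The plan is to verify the three conditions of the definition of a cellular equivalence relation (Definition 6.1 of \cite{LW}) for $\sim$ on the cell complex $\mathcal{K}_{3}$. Part (b) will then follow from Theorem \ref{cc}. The set of closed cells $\mathbb{S}/\sim$ consists of the images $p(\sigma)$ of $\sim$-minimal cells. Lemma \ref{p-par-b} gives $p(a_{1},a_{2})=p(b_{1},b_{2})$ whenever the two cells are parallel. Hence every $p(a_{1},a_{2})$ coincides with $p$ of some minimal cell, and the cells of the quotient are exactly the $p(a_{1},a_{2})$.

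\textbf{Condition (1).} Fix a cell $\sigma=(a_{1},a_{2})$. By Definition \ref{relK}, a point of its open cell is identified only with points lying in open cells of cells parallel to $\sigma$. Lemma \ref{pl} says $\parallel$ is an equivalence relation, and Lemma \ref{p-par} says $p$ identifies open cells exactly along parallel classes. Together these give
$$p^{-1}p(\sigma-\sigma^{\circ})=\bigcup\{(b_{1},b_{2})\setminus\partial(b_{1},b_{2})\mid (b_{1},b_{2})\parallel\sigma\}.$$
This is a union of open cells, as required.

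\textbf{Condition (2).} All cells parallel to $\sigma$ have the same dimension, since $\dim(\sigma_{1}\otimes\sigma_{2})=\dim\sigma_{1}+\dim\sigma_{2}$ in every case where $\otimes$ is defined. So every cell in the parallel class is minimal, and the thing to show is that $p$ restricted to each open cell $\sigma_{0}-\sigma_{0}^{\circ}$ is a bijection onto $p(\sigma-\sigma^{\circ})$.
\begin{itemize}
\item Injectivity is Lemma \ref{p-inj}.
\item For surjectivity, take any point of $p(\sigma-\sigma^{\circ})$, say $p(x)$. By Definition \ref{relK}, the point $\Phi_{\sigma_{0}}(\Phi^{-1}_{\sigma}(x))$ lies in the open cell of $\sigma_{0}$ and is $\sim$-equivalent to $x$.
\end{itemize}
The equality $p(\sigma)=p(\sigma_{0})$ is exactly Lemma \ref{p-par-b}.

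\textbf{Condition (3).} Let $\sigma'$ and $\sigma''$ both be minimal for $\sigma$, so $\sigma'\parallel\sigma''$ and they have the same domain $I^{n}$. I take $h=\mathrm{id}$ and must show $p\Phi_{\sigma'}=p\Phi_{\sigma''}$ on all of $I^{n}$.
\begin{itemize}
\item On the interior this is the definition of $\sim$.
\item On $\partial I^{n}$, continuity of $p\Phi_{\sigma'}$ and $p\Phi_{\sigma''}$ extends the equality from the interior, provided the target is Hausdorff on the relevant closed cells.
\end{itemize}
Hausdorffness is not yet available, so I would avoid it and argue directly. The face-by-face computations in the proofs of Lemma \ref{p-par-b-1} and Lemma \ref{p-par-b} show that corresponding boundary faces of parallel cells are again parallel, with compatible parametrizations. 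Examples are $(\iota f,u.\sigma)$ versus $(\iota(f.u),\sigma)$, and $(f,u.e)$ versus $(f.u,e)$. One then checks that the coordinate parametrization of $\Phi_{\sigma'}$ on each face of $I^{n}$ agrees with that of $\Phi_{\sigma''}$ up to the identification $\sim$ on that face. This is an induction on dimension: a face of dimension $k<n$ is a pair of parallel cells of dimension $k$, to which the interior argument already applies.

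The main obstacle is this boundary compatibility for the 3-cell case $([e,f],g)\parallel(e,[f,g])$. The coordinates $(t_{1},t_{2},t_{3})$ of $\Phi_{[e,f]}\times\Phi_{g}$ and of $\Phi_{e}\times\Phi_{[f,g]}$ must match face by face with the orientation conventions $h_{1},\dots,h_{4}$ of (\ref{amsq}). Here I would explicitly use the formulas (\ref{S1}) and (\ref{S2}) to compare the two maps on each of the six faces. If some face turns out reversed, I would replace $h=\mathrm{id}$ by the appropriate coordinate reflection of $I^{3}$.
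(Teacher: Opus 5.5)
Your proposal is correct and is essentially the paper's proof. Conditions (1) and (2) follow from Definition \ref{relK} together with Lemmas \ref{pl}, \ref{p-par}, \ref{p-inj} and \ref{p-par-b}. Condition (3) is handled with $h=\mathrm{id}$ and checked on $\partial I^{n}$ by induction on dimension, using the lower-dimensional equalities (\ref{cr-1}) and (\ref{cr-2}), applied piecewise along the subdivided or collapsed sides of cells from $F.\mathbf{p}.F$; this includes the six-face comparison for $([e,f],g)\parallel(e,[f,g])$ via (\ref{S1}) and (\ref{S2}).

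The one thing to drop is the reflection fallback. Since $\sim$ is defined through $\Phi_{b}\circ\Phi^{-1}_{a}$ and $p\circ\Phi_{\sigma''}$ is injective on the interior, any admissible $h$ is forced to be the identity. In fact all faces do match under $h=\mathrm{id}$, as the paper's case analysis confirms.
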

\begin{proof}
(a) We will verify the three requirements for $\sim$ to be a cellular equivalence.

(i) Let $(a_{1},a_{2}) \in \mathcal{K}_{3}$. We prove that
\begin{equation} \label{preim}
p^{-1} p((a_{1}, a_{2}) \setminus \partial (a_{1}, a_{2}))= \underset{(b_{1},b_{2})\parallel (a_{1},a_{2})}{\bigcup} (b_{1}, b_{2}) \setminus \partial (b_{1}, b_{2}),
\end{equation}
which implies that $p^{-1} p((a_{1}, a_{2}) \setminus \partial (a_{1}, a_{2}))$ is a union of open cells which in our case have the same dimension as $(a_{1}, a_{2}) \setminus \partial (a_{1}, a_{2})$. Let $(x_{1},x_{2}) \in p^{-1} p((a_{1}, a_{2}) \setminus \partial (a_{1}, a_{2}))$ be any element. The condition on $(x_{1},x_{2})$ implies that $p(x_{1},x_{2}) \in p((a_{1}, a_{2}) \setminus \partial (a_{1}, a_{2}))$, hence there is some $(y_{1},y_{2}) \in (a_{1}, a_{2}) \setminus \partial (a_{1}, a_{2})$ such that $p(x_{1},x_{2})=p(y_{1},y_{2})$. It follows that $(x_{1},x_{2}) \sim (y_{1},y_{2})$ which means in particular that there is a cell $(b_{1},b_{2})$ such that $(b_{1},b_{2}) \parallel (a_{1},a_{2})$ and $(x_{1},x_{2}) \in (b_{1},b_{2}) \setminus \partial (b_{1},b_{2})$. Conversely, let $(x_{1},x_{2}) \in (b_{1}, b_{2}) \setminus \partial (b_{1}, b_{2})$ where $(b_{1},b_{2}) \parallel (a_{1},a_{2})$ which from Lemma \ref{p-par} is equivalent to $p((b_{1}, b_{2}) \setminus \partial (b_{1}, b_{2})) = p((a_{1}, a_{2}) \setminus \partial (a_{1}, a_{2}))$. It follows that $p(x_{1},x_{2}) \in p((a_{1}, a_{2}) \setminus \partial (a_{1}, a_{2}))$, or equivalently that $(x_{1},x_{2}) \in p^{-1} p((a_{1}, a_{2}) \setminus \partial (a_{1}, a_{2}))$ proving the equality.

(ii) Since all open cells in (\ref{preim}) have the same (minimal) dimension, it remains to prove that for every $(b_{1},b_{2})\parallel (a_{1},a_{2})$ the restriction of $p$ on $(b_{1}, b_{2}) \setminus \partial (b_{1}, b_{2})$ is a bijection onto $p((a_{1}, a_{2}) \setminus \partial (a_{1}, a_{2}))$. The map is clearly onto from the equality $p((b_{1}, b_{2}) \setminus \partial (b_{1}, b_{2})) = p((a_{1}, a_{2}) \setminus \partial (a_{1}, a_{2}))$. To see that it is 1-1 we let $(x_{1},x_{2}), (y_{1},y_{2}) \in (b_{1}, b_{2}) \setminus \partial (b_{1}, b_{2})$ such that $p(x_{1},x_{2})=p(y_{1},y_{2})$. This implies that $(x_{1},x_{2})$ and $(y_{1},y_{2})$ are equivalent pairs in the same open cell $(b_{1}, b_{2}) \setminus \partial (b_{1}, b_{2})$. Lemma \ref{p-inj} implies that $(x_{1},x_{2})=(y_{1},y_{2})$. Finally, Lemma \ref{p-par-b} implies that whenever $(a_{1},a_{2})\parallel (b_{1},b_{2})$, then $p(a_{1},a_{2})=p(b_{1},b_{2})$.  

(iii) Let now $a=(a_{1},a_{2}), b=(b_{1},b_{2})$ be two $\sim$-minimal cells with the corresponding characteristic maps $\Phi_{(a_{1},a_{2})}: I^{\text{dim}a} \rightarrow \mathcal{K}_{3}$ and $\Phi_{(b_{1},b_{2})}: I^{\text{dim}b} \rightarrow \mathcal{K}_{3}$, and let $h: I^{\text{dim}a} \rightarrow I^{\text{dim}b}$ be the idenitiy homeomorphism. We prove that
\begin{equation} \label{crutial}
p \circ \Phi_{(b_{1}, b_{2})} \circ h= p \circ \Phi_{(a_{1}, a_{2})}.
\end{equation}
If $u \in I^{\text{dim}a} \setminus \partial I^{\text{dim}a}$, then 
$$(x_{1},x_{2})=\Phi_{(a_{1}, a_{2})}(u) \in (a_{1},a_{2}) \setminus \partial (a_{1},a_{2})$$ 
and 
$$h(u) \in I^{\text{dim}b}\setminus \partial I^{\text{dim}b}.$$ 
It follows that 
$$(y_{1},y_{2})=\Phi_{(b_{1},b_{2})}(h(u)) \in (b_{1},b_{2}) \setminus \partial (b_{1},b_{2}).$$ 
From the convention proceeding Definition \ref{relK} we have that 
$$u=\Phi^{-1}_{(a_{1}, a_{2})}(x_{1}, x_{2})$$
and
$$h(u)=\Phi^{-1}_{(b_{1}, b_{2})}(y_{1},y_{2}),$$
consequently
$$\Phi^{-1}_{(b_{1}, b_{2})}(y_{1},y_{2})=(h \circ \Phi^{-1}_{(a_{1}, a_{2})})(x_{1},x_{2})=\Phi^{-1}_{(a_{1}, a_{2})}(x_{1},x_{2}),$$
and then 
$$(y_{1},y_{2})=\Phi_{(b_{1}, b_{2})}(\Phi^{-1}_{(a_{1}, a_{2})}(x_{1},x_{2})).$$
So we now have that $(x_{1},x_{2}) \sim (y_{1},y_{2})$, therefore
\begin{align*}
(p \circ \Phi_{(a_{1},a_{2})})(u) &= p(x_{1},x_{2})\\
&=p(y_{1},y_{2})\\
&= (p\circ \Phi_{(b_{1}, b_{2})} \circ h)(u).
\end{align*}
Now we prove the equality when $u \in \partial I^{\text{dim}a}$. There are four possible cases for $\text{dim}a$.

\begin{itemize}
\item[(0)] $\text{dim}a=0$. In this case $a_{1},a_{2},b_{1}, b_{2} \in F$ and then the result follows from (d-5) of Lemma \ref{p-par-b-1}.
\item[(1)] $\text{dim}a=1$. Then there are 1-cells $e$ and $f$ and 0-cells $u,v$ such that $a=(e,u)$ and $b=(f,v)$ or $b=(v,f)$, or $a=(u,e)$ and $b=(f,v)$ or $b=(v,f)$. We will discuss only one of these cases since the other ones are treated in a similar fashion. Let for instance $a=(e,u)$ and $b=(v,f)$, therefore from Remark \ref{d2} we have
$$e.u=e \otimes u=v \otimes f=v.f,$$
consequently
$$\iota(e)u=v \iota(f) \text{ and } \tau(e)u=v\tau(f).$$
If $t \in \partial I^{1}$, then $t=0$ or $t=1$. In the first case, 
\begin{align*}
(p \circ \Phi_{(e,u)})(0)&= p((\iota(e), u))\\
&= p((v, \iota(f))) \tag{by Lemma \ref{p-par-b-1}}\\
&= (p \circ \Phi_{(v,f)} \circ h)(0).
\end{align*}
In the second case when $t=1$, 
\begin{align*}
(p \circ \Phi_{(e,u)})(1)&= p((\tau(e), u))\\
&= p((v, \tau(f))) \tag{by Lemma \ref{p-par-b-1}}\\
&= (p \circ \Phi_{(v,f)} \circ h)(1).
\end{align*}
\item[(2)] $\text{dim}a=2$. The proof will be based on what we proved in (1) that whenever $(a_{1},a_{2})$ and $(b_{1},b_{2})$ are parallel cells of dimension 1, then for every $t \in I$, 
\begin{equation} \label{cr-1}
(p \circ \Phi_{(a_{1}, a_{2})})(t)=(p \circ \Phi_{(b_{1}, b_{2})})(t).
\end{equation}
There are a few subcases here.
\begin{itemize}
\item[(2-1)] $a=(\delta, w)$ with $\delta \in F.\mathbf{p}.F$ and $w \in F$. Then $b=(\delta',w')$ or $b=(w', \delta')$ with $w' \in F$ and $\delta' \in F.\mathbf{p}.F$, and $\delta.w=\delta'.w'$ or $\delta.w=w'.\delta'$ respectively. We will discuss here the case when $a=(\delta, w)$ and $b=(w', \delta')$. Assume that $\delta=u. \sigma. v$ where $\sigma \in \mathbf{p}$ and $u,v \in F$, and that $\delta'=u'. \sigma'. v'$ where $\sigma' \in \mathbf{p}$ and $u',v' \in F$. From the assumption that $(u.\sigma.v, w) \parallel (w',u'.\sigma'.v')$ we derive from Remark \ref{d2} that $u.\sigma.vw=w'u'.\sigma'.v'$ and then $u=w'u'$, $\sigma=\sigma'$, $vw=v'$. Assume that $\sigma$ is a 2-cell of type (iii) which is attached along the loop
$$e^{\varepsilon_{1}}_{1} \circ \dots \circ e^{\varepsilon_{s}}_{s} \circ e^{\varepsilon_{s+1}}_{s+1} \circ \dots \circ e_{n}^{\varepsilon_{n}},$$
therefore $u.\sigma.v$ is attached along the loop
$$u.e^{\varepsilon_{1}}_{1}.v \circ \dots \circ u.e^{\varepsilon_{s}}_{s}.v \circ u.e^{\varepsilon_{s+1}}_{s+1}.v \circ \dots \circ u.e_{n}^{\varepsilon_{n}}.v,$$
and $u'.\sigma'.v'=u'.\sigma.v'$ is attached along the loop
$$u'.e^{\varepsilon_{1}}_{1}.v' \circ \dots \circ u'.e^{\varepsilon_{s}}_{s}.v' \circ u'.e^{\varepsilon_{s+1}}_{s+1}.v' \circ \dots \circ u'.e_{n}^{\varepsilon_{n}}.v'.$$
Let now $(t_{1}, t_{2}) \in \partial I^{2}$ be arbitrary. We will distinguish between cases when both coordinates are $0$ or $1$, and the cases when one is either $0$ or $1$ and the other is in $]0,1[$. Since the proofs when both coordinates are $0$ or $1$ are similar to each other, we will discuss here only one of them. Let assume for instance that $(t_{1}, t_{2})=(1,0)$, then
\begin{align*}
(p \circ \Phi_{(u.\sigma.v,w)})(1,0)&= p ((\Phi_{u.\sigma.v}(1,0), w))\\
&=p (u.\tau e^{\varepsilon_{1}}_{1}.v,w)\\
&= p(w', u'.\tau e^{\varepsilon_{1}}_{1}.v') \tag{since $u=w'u'$ and $vw=v'$}\\
&= p(w', \Phi_{u'.\sigma.v'}(1,0))\\
&=p(\Phi_{(w', u'.\sigma.v')}(1,0))\\
&= (p \circ \Phi_{(w', u'.\sigma.v')} \circ h)(1,0).
\end{align*}
Let’s now see the case when $t_{1}=0$ and $t_{2} \in ]0,1[$. We have that
\begin{align*}
(p \circ \Phi_{(u.\sigma.v, w)})(0,t_{2})&= p(\Phi_{u.\sigma.v}(0,t_{2}), w)\\
&= p((\nu_{\mathcal{D}} \circ j_{1} \circ \Phi^{\varepsilon_{n}}_{u.e_{n}.v} \circ h_{3})(0,t_{2}), w)\\
&= p(\nu_{\mathcal{D}}(j_{1}(\Phi^{\varepsilon_{n}}_{u.e_{n}.v}(1-t_{2}))), w)\\
&=p(\nu_{\mathcal{D}}(j_{1}(\Phi_{u.e_{n}.v}((1-\varepsilon_{n})/2+\varepsilon_{n}\cdot (1-t_{2})))), w)\\
&=p (\Phi_{u.e_{n}.v}((1-\varepsilon_{n})/2+\varepsilon_{n}\cdot (1-t_{2}))), w)\\
&=(p \circ \Phi_{(u.e_{n}.v, w)})((1-\varepsilon_{n})/2+\varepsilon_{n}\cdot (1-t_{2}))\\
&= (p \circ  \Phi_{(w',u'.e_{n}.v')})((1-\varepsilon_{n})/2+\varepsilon_{n}\cdot (1-t_{2})) \tag{by (\ref{cr-1})}\\
&= p(w', \Phi_{u'.e_{n}.v'}((1-\varepsilon_{n})/2+\varepsilon_{n}\cdot (1-t_{2})))\\
&= p(w', \Phi^{\varepsilon_{n}}_{u'.e_{n}.v'}(1-t_{2}))\\
&=p(w', \nu_{\mathcal{D}}(j_{1}(\Phi^{\varepsilon_{n}}_{u'.e_{n}.v'}(1-t_{2})))\\
&=p(w',(\nu_{\mathcal{D}} \circ j_{1} \circ \Phi^{\varepsilon_{n}}_{u'.e_{n}.v'}\circ h_{3})(0,t_{2}))\\
&=p(w', \Phi_{u'.\sigma'.v'}(0,t_{2}))\\
&=(p \circ \Phi_{(w', u'.\sigma'.v')} \circ h)(0,t_{2}).
\end{align*}
The case when $t_{1} \in ]0,1[$ and $t_{2}=0$ is symmetric to the above. Let now see the case when $t_{1}=1$ and $t_{2} \in ]0,1[$. Then there is some $i=2,...,s$ such that $t \in [\dfrac{i-2}{s-1}, \dfrac{i-1}{s-1}]$. It follows that

\resizebox{0.9\textwidth}{!}
{\begin{minipage}{\textwidth}
\begin{align*}
(p \circ \Phi_{(u.\sigma.v, w)})(1,t_{2})&= p(\Phi_{u.\sigma.v}(1,t_{2}), w)\\
&= p ((\nu_{\mathcal{D}} \circ j_{1} \circ (\Phi^{\varepsilon_{2}}_{u.e_{2}.v} \ast \dots \ast \Phi^{\varepsilon_{s}}_{u.e_{s}.v}) \circ h_{2})(1,t_{2}), w)\\
&=p((\nu_{\mathcal{D}} \circ j_{1}  \circ (\Phi^{\varepsilon_{2}}_{u.e_{2}.v} \ast \dots \ast \Phi^{\varepsilon_{s}}_{u.e_{s}.v}))(t_{2}), w)\\
&=p(\nu_{\mathcal{D}}(j_{1}(\Phi^{\varepsilon_{i}}_{u.e_{i}.v}((s-1)t_{2}-(i-2)))) , w) \tag{by (\ref{C2})}\\
&=p(\nu_{\mathcal{D}}(j_{1}(\Phi_{u.e_{i}.v}((1-\varepsilon_{i})/2+ \varepsilon_{i}((s-1)t_{2}-(i-2))))) , w)\\
&= p(\Phi_{u.e_{i}.v}((1-\varepsilon_{i})/2+ \varepsilon_{i}((s-1)t_{2}-(i-2))) , w)\\
&= (p \circ \Phi_{(u.e_{i}.v, w)})((1-\varepsilon_{i})/2+ \varepsilon_{i}((s-1)t_{2}-(i-2)))\\
&= (p \circ \Phi_{(w', u'.e_{i}.v')})((1-\varepsilon_{i})/2+ \varepsilon_{i}((s-1)t_{2}-(i-2))) \tag{by (\ref{cr-1})}\\
&=p(w', \Phi_{u'.e_{i}.v'}((1-\varepsilon_{i})/2+ \varepsilon_{i}((s-1)t_{2}-(i-2))) \\
&=p(w', \Phi^{\varepsilon_{i}}_{u'.e_{i}.v'}((s-1)t_{2}-(i-2))) \\
&= p (w', \nu_{\mathcal{D}}(j_{1}(\Phi^{\varepsilon_{i}}_{u'.e_{i}.v'}((s-1)t_{2}-(i-2))))\\
&= p(w', (\nu_{\mathcal{D}}\circ j_{1}  \circ (\Phi^{\varepsilon_{2}}_{u'.e_{2}.v'} \ast \dots \ast \Phi^{\varepsilon_{s}}_{u'.e_{s}.v'}))(t_{2}))\\
&= p (w', (\nu_{\mathcal{D}} \circ j_{1}  \circ (\Phi^{\varepsilon_{2}}_{u'.e_{2}.v'} \ast \dots \ast \Phi^{\varepsilon_{s}}_{u'.e_{s}.v'}) \circ h_{2})(1,t_{2}))\\
&=p(w', \Phi_{u'.\sigma.v'}(1,t_{2}))\\
&= (p \circ \Phi_{(w', u'.\sigma'.v')} \circ h)(1,t_{2}).
\end{align*}
\end{minipage}
}
The case when $t_{1} \in ]0,1[$ and $t_{2} =1$ is symmetric to the above. We will present its proof in full for convenience.

\resizebox{0.9\textwidth}{!}
{\begin{minipage}{\textwidth}
\begin{align*}
(p \circ \Phi_{(u.\sigma.v, w)})(t_{1},1)&= p(\Phi_{u.\sigma.v}(t_{1},1), w)\\
&=p((\nu_{\mathcal{D}} \circ j_{1} \circ (\Phi^{\varepsilon_{s+1}}_{e_{s+1}} \ast \dots \ast \Phi^{\varepsilon_{n-1}}_{e_{n-1}}) \circ h_{3})(t_{1},1), w)\\
&= p((\nu_{\mathcal{D}} \circ j_{1} \circ (\Phi^{\varepsilon_{s+1}}_{e_{s+1}} \ast \dots \ast \Phi^{\varepsilon_{n-1}}_{e_{n-1}}))(1-t_{1}),w)\\
&=p(\nu_{\mathcal{D}} (j_{1} (\Phi^{\varepsilon_{s+1+i}}_{u.e_{s+1+i}.v}(r(1-t_{1})-i))), w)) \tag{by (\ref{C4})}\\
&= p(\nu_{\mathcal{D}} (j_{1} (\Phi_{u.e_{s+1+i}.v}((1-\varepsilon_{s+1+i})/2+ \varepsilon_{s+1+i}(r(1-t_{1})-i)))), w))\\
&= p (\Phi_{u.e_{s+1+i}.v}((1-\varepsilon_{s+1+i})/2+ \varepsilon_{s+1+i}(r(1-t_{1})-i)), w))\\
&= (p \circ \Phi_{(u.e_{s+1+i}.v, w)}((1-\varepsilon_{s+1+i})/2+ \varepsilon_{s+1+i}(r(1-t_{1})-i))\\
&=(p \circ \Phi_{(w', u'.e_{s+1+i}.v')}((1-\varepsilon_{s+1+i})/2+ \varepsilon_{s+1+i}(r(1-t_{1})-i)) \tag{by (\ref{cr-1})}\\
&=p(w', \Phi_{u'.e_{s+1+i}.v'}((1-\varepsilon_{s+1+i})/2+ \varepsilon_{s+1+i}(r(1-t_{1})-i)))\\
&= p(w', \Phi^{\varepsilon_{s+1+i}}_{u'.e_{s+1+i}.v'}(r(1-t_{1})-i))\\
&=p(w', \nu_{\mathcal{D}}(j_{1}(\Phi^{\varepsilon_{s+1+i}}_{u'.e_{s+1+i}.v'}(r(1-t_{1})-i)))\\
&=p(w', (\nu_{\mathcal{D}}\circ j_{1} \circ (\Phi^{\varepsilon_{s+1}}_{u'.e_{s+1}.v'} \ast \dots \ast \Phi^{\varepsilon_{n-1}}_{u'.e_{n-1}.v'})\circ h_{4})(t_{1},1)))\\
&=(p \circ \Phi_{(w', u'.\sigma.v')})(t_{1},1)\\
&= (p \circ \Phi_{(w', u'.\sigma.v')} \circ h)(t_{1},1).
\end{align*}
\end{minipage}
}

Now we discuss the case when $\sigma$ is of type (ii). Let assume that $\sigma$ is attached along
$$e^{\varepsilon_{1}}_{1} \circ e^{\varepsilon_{2}}_{2} \circ e_{3}^{\varepsilon_{3}},$$
therefore $u.\sigma.v$ is attached along 
$$u.e^{\varepsilon_{1}}_{1}.v \circ u.e^{\varepsilon_{2}}_{2}.v \circ u.e_{3}^{\varepsilon_{3}}.v,$$
and $u.\sigma'.v=u.\sigma. v$ is attached along 
$$u'.e^{\varepsilon_{1}}_{1}.v' \circ u'.e^{\varepsilon_{2}}_{2}.v' \circ u'.e_{3}^{\varepsilon_{3}}.v'.$$
We discuss here the most delicate case when $t_{2}=1$ and $t_{1} \in I$ is arbitrary. The discussions of other cases are identical to the corresponding ones for the cell of type (iii). 
\begin{align*}
(p \circ \Phi_{(u.\sigma.v, w)})(t_{1},1)&= p(\Phi_{u.\sigma.v}(t_{1},1), w)\\
&=p(\Phi^{\varepsilon_{2}}_{u.e_{2}.v}(1), w) \tag{by (\ref{B4})}\\
&=p(\Phi_{u.e_{2}.v}((1-\varepsilon_{2})/2+ \varepsilon_{2}\cdot 1), w)\\
&=(p \circ \Phi_{(u.e_{2}.v, w)})((1-\varepsilon_{2})/2+ \varepsilon_{2}\cdot 1)\\
&=(p \circ \Phi_{(w',u'.e_{2}.v')})((1-\varepsilon_{2})/2+ \varepsilon_{2}\cdot 1) \tag{by (\ref{cr-1})}\\
&= p(w', \Phi_{u'.e_{2}.v'}((1-\varepsilon_{2})/2+ \varepsilon_{2}\cdot 1))\\
&= p(w', \Phi^{\varepsilon_{2}}_{u'.e_{2}.v'}(1))\\
&= p(w', \nu_{\mathcal{D}}j_{1}(\tau (u'.e^{\varepsilon_{2}}_{2}.v')))\\
&= p(w', (c_{(\nu_{\mathcal{D}} \circ j_{1})(\tau (u'.e^{\varepsilon_{2}}_{2}.v')} \circ h_{4})(t_{1},1))\\
&= p(w', \Phi_{u'.\sigma.v'}(t_{1},1))\\
&=(p \circ \Phi_{(w',u'.\sigma'.v')})(t_{1},1).
\end{align*}
When the 2-cell $\sigma$ is of type (i) and attached along $e^{\varepsilon_{1}}_{1} \circ e^{\varepsilon_{2}}_{2}$, then $u.\sigma. v$ is attached along $u.e^{\varepsilon_{1}}_{1}.v \circ u.e^{\varepsilon_{2}}_{2}.v$ and $u'.\sigma.v'$ is attached along $u'.e^{\varepsilon_{1}}_{1}.v' \circ u'.e^{\varepsilon_{2}}_{2}.v'$. We discuss here for convenience the case when $t_{1} \in I$ and $t_{2}=1$. Its dual case when $t_{1}=1$ and $t_{2} \in I$ has a similar proof and is omitted. The other cases are proved similarly to their corresponding ones for the cell of type (iii).
\begin{align*}
(p \circ \Phi_{(u.\sigma.v, w)})(t_{1},1)&= p(\Phi_{u.\sigma.v}(t_{1},1), w)\\
&=p(\Phi^{\varepsilon_{2}}_{u.e_{2}.v}(0), w) \tag{by (\ref{A4})}\\
&=p(\Phi_{u.e_{2}.v}((1-\varepsilon_{2})/2+\varepsilon_{2} \cdot 0), w)\\
&=(p \circ \Phi_{(u.e_{2}.v, w)})((1-\varepsilon_{2})/2+\varepsilon_{2} \cdot 0)\\
&=(p \circ \Phi_{(w',u'.e_{2}.v')})((1-\varepsilon_{2})/2+\varepsilon_{2} \cdot 0) \tag{by (\ref{cr-1})}\\
&= p(w', \Phi_{u'.e_{2}.v'}((1-\varepsilon_{2})/2+\varepsilon_{2} \cdot 0)))\\
&= p(w', \Phi^{\varepsilon_{2}}_{u'.e_{2}.v'}(0))\\
&= p(w', \nu_{\mathcal{D}}j_{1}(\tau (u'.e^{\varepsilon_{2}}_{2}.v')))\\
&= p(w', (c_{(\nu_{\mathcal{D}} \circ j_{1})(\tau (u'.e^{\varepsilon_{2}}_{2}.v')} \circ h_{4})(t_{1},1))\\
&= p(w', \Phi_{u'.\sigma.v'}(t_{1},1))\\
&=(p \circ \Phi_{(w',u'.\sigma'.v')})(t_{1},1).
\end{align*}

\item[(2-2)] The case when $a=(w,\delta)$ with $w \in F$ and $\delta \in F.\mathbf{p}.F$ is dual to case (2-1).
\item[(2-3)] $a=([e,f],w)$ where $e,f$ are 1-cells and $w \in F$. There are three possible cases for $b$ here. In the first case $b=([g,h], w')$ with $g,h$ being 1-cells and $w' \in F$. In the second case $b=(w', [g,h])$ with $g,h$ being 1-cells and $w' \in F$, and in the third case $b=(g,h)$ with $g,h$ being 1-cells. The proofs for the first two cases are similar to those of (2-2) and are omitted. So let assume that $b=(g,h)$, hence from the assumption we have from Remark \ref{d2} that $[e,f.w]=[e,f].w=[g,h]$. It follows from Lemma \ref{unamb} that there is some $u \in F$ such that $e.u=g$ and $u.h=f.w$, or that $g.u=e$ and $u.(f.w)=h$. We make the proof under this last assumption. Let $(t_{1}, t_{2}) \in \partial I^{2}$.   As in (2-2) we discuss separately cases when both $t_{1},t_{2} \in \{0,1\}$, and when one of them is in $\{0,1\}$ and the other one is from $]0,1[$. Again, in order to avoid the repetition of similar arguments, we pick only one case from each group. Assume for example that $(t_{1}, t_{2})=(0,1)$. We see that
\begin{align*}
(p \circ \Phi_{(g,h)})(0,1)&= p(\Phi_{(g,h)}(0,1))\\
&= p(\Phi_{g}(0), \Phi_{h}(1))\\
&= p(\iota(g), \tau(h))\\
&= p(\iota(g), u \tau(f)w)\\
&=p(\iota(g)u\tau(f),w) \tag{by Lemma \ref{p-par-b-1}}\\
&= p(\iota(e)\tau(f), w)\\
&= p(\Phi_{[e,f]}(0,1), w) \tag{by (\ref{S2})}\\
&= (p \circ \Phi_{([e,f],w)} \circ h)(0,1).
\end{align*}
From the second group of cases we discuss here the case when $t_{1} \in ]0,1[$ and $t_{2}=0$. In this case we have
\begin{align*}
(p \circ \Phi_{(g,h)})(t_{1},0)&= p(\Phi_{g}(t_{1}), \Phi_{h}(0))\\
&= p(\Phi_{g}(t_{1}), \iota(h))\\
&= p(\Phi_{g}(t_{1}), u \iota(f)w)\\
&= (p \circ \Phi_{(g, u \iota(f)w)})(t_{1})\\
&= (p \circ \Phi_{(g.u \iota(f),w)})(t_{1}) \tag{by (\ref{cr-1})}\\
&= p(\Phi_{g.u \iota(f)} (t_{1}), w) \\
&= p(\Phi_{e.\iota(f)}(t_{1}), w)\\
&= p(\Phi_{[e,f]}(t_{1},0), w) \tag{by (\ref{S1})}\\
&= p(\Phi_{([e,f],w)} (t_{1},0))\\
&=(p \circ \Phi_{([e,f],w)} \circ h)(t_{1},0).
\end{align*}
\end{itemize}
\item[(3)] Both cells are of dimension 3. The proof will be based on what we proved in (2) that whenever $(a_{1},a_{2})$ and $(b_{1},b_{2})$ are parallel cells of dimension 2, then for every $(t_{1},t_{2}) \in I^{2}$, 
\begin{equation} \label{cr-2}
(p \circ \Phi_{(a_{1}, a_{2})})(t_{1},t_{2})=(p \circ \Phi_{(b_{1}, b_{2})})(t_{1},t_{2}).
\end{equation}
There are three cases for our parallel 3-cells of $\mathcal{K}_{3}$.
\begin{itemize}
\item[(3-1)] The first case is when $(a_{1},a_{2})=([e,f],g)$ and $(b_{1},b_{2})=(e, [f,g])$. We note that $(t_{1}, t_{2}, t_{3}) \in \partial I^{3}$,
if and only if at least one of $t_{i}$'s is either $0$ or $1$. If $t_{3}=0$, then on the one hand we have that
\begin{align*}
\Phi_{([e,f],g)} (t_{1}, t_{2}, 0)&=(\Phi_{[e,f]}(t_{1}, t_{2}), \Phi_{g}(0))\\
&=(\Phi_{[e,f]}(t_{1}, t_{2}),\iota(g))\\
&= \Phi_{([e,f], \iota (g))}(t_{1},t_{2}),
\end{align*}
and on the other hand
\begin{align*}
\Phi_{(e,[f,g])} (t_{1}, t_{2}, 0)&=(\Phi_{e}(t_{1}), \Phi_{[f,g]}(t_{2},0))\\
&= (\Phi_{e}(t_{1}), \Phi_{f.\iota(g)}(t_{2})). \tag{by (\ref{S1})}\\
&= \Phi_{(e, f. \iota (g))}(t_{1},t_{2}).
\end{align*}
Observe now that
$$\Phi_{([e,f], \iota (g))}(t_{1},t_{2}) \in ([e,f], \iota(g))$$ \text{ and } 
$$\Phi_{(e, f. \iota (g))}(t_{1},t_{2}) \in (e, f.\iota(g)).$$
But from Lemma \ref{p-par-b-1}
$$([e,f], \iota(g)) \parallel (e, f.\iota(g)),$$
then from (\ref{cr-2}) it follows that
\begin{align*}
(p\circ \Phi_{([e,f],g)} )(t_{1}, t_{2}, 0)&= (p \circ \Phi_{(e,[f,g])})(t_{1},t_{2},0)\\
&=(p \circ \Phi_{(e,[f,g])} \circ h) (t_{1}, t_{2},0).
\end{align*}
If $t_{3}=1$, then on the one hand we have that
\begin{align*}
\Phi_{([e,f],g)} (t_{1}, t_{2}, 1)&=(\Phi_{[e,f]}(t_{1}, t_{2}), \Phi_{g}(1))\\
&=(\Phi_{[e,f]}(t_{1}, t_{2}),\tau(g))\\
&= \Phi_{([e,f], \tau (g))}(t_{1},t_{2}),
\end{align*}
and on the other hand
\begin{align*}
\Phi_{(e,[f,g])} (t_{1}, t_{2}, 1)&=(\Phi_{e}(t_{1}), \Phi_{[f,g]}(t_{2},1))\\
&= (\Phi_{e}(t_{1}), \Phi_{f.\tau(g)}(t_{2})). \tag{by (\ref{S2})}\\
&=\Phi_{(e, f. \tau(g))}(t_{1},t_{2}).
\end{align*}
Observe now that
$$(\Phi_{[e,f]}(t_{1}, t_{2}),\tau(g)) \in ([e,f], \tau(g))$$ \text{ and } 
$$(\Phi_{e}(t_{1}), \Phi_{f.\tau(g)}(t_{2})) \in (e, f.\tau(g)).$$
But from Lemma \ref{p-par-b-1}
$$([e,f], \tau(g)) \parallel (e, f.\tau(g)),$$
then from (\ref{cr-2}) it follows that
\begin{align*}
(p\circ \Phi_{([e,f],g)} )(t_{1}, t_{2}, 1)&= p (\Phi_{e}(t_{1}), \Phi_{[f,g]}(t_{2},1))\\
&=(p \circ \Phi_{(e,[f,g])} \circ h) (t_{1}, t_{2},1).
\end{align*}
Now we discuss cases when $t_{2}=0$ and $t_{2}=1$. When $t_{2}=0$ we have that
\begin{align*}
\Phi_{([e,f],g)}(t_{1},0,t_{3})&= (\Phi_{[e,f]}(t_{1},0), \Phi_{g}(t_{3}))\\
&= (\Phi_{e.\iota(f)}(t_{1}), \Phi_{g}(t_{3}))\\
&= \Phi_{(e.\iota (f), g)}(t_{1},t_{3}),
\end{align*}
and
\begin{align*}
\Phi_{(e,[f,g])}(t_{1},0,t_{3})&= (\Phi_{e}(t_{1}), \Phi_{[f,g]}(0,t_{3}))\\
&= (\Phi_{e}(t_{1}), \Phi_{\iota(f).g}(t_{3})) \tag{by (\ref{S2})}\\
&= \Phi_{(e, \iota(f).g)}(t_{1},t_{3}).
\end{align*}
Observe that 
$$\Phi_{(e.\iota (f), g)}(t_{1},t_{3}) \in (e.\iota(f), g) \text{ and } \Phi_{(e, \iota(f).g)}(t_{1},t_{3}) \in (e, \iota(f).g),$$
and since 
$$(e.\iota(f), g) \parallel (e, \iota(f).g)$$
it follows from (\ref{cr-2}) that
\begin{align*}
(p\circ \Phi_{([e,f],g)} )(t_{1}, 0,t_{3})&= (p \circ \Phi_{(e,[f,g])})(t_{1},0,t_{3})\\
&= (p \circ \Phi_{(e,[f,g])} \circ h)(t_{1},0,t_{3}).
\end{align*}
In the case when $t_{2}=1$ we have that
\begin{align*}
\Phi_{([e,f],g)}(t_{1},1,t_{3})&= (\Phi_{[e,f]}(t_{1},1), \Phi_{g}(t_{3}))\\
&= (\Phi_{e.\tau(f)}(t_{1}), \Phi_{g}(t_{3}))\\
&=\Phi_{(e.\tau (f),g)}(t_{1},1,t_{3}),
\end{align*}
and
\begin{align*}
\Phi_{(e,[f,g])}(t_{1},1,t_{3})&= (\Phi_{e}(t_{1}), \Phi_{[f,g]}(1,t_{3}))\\
&= (\Phi_{e}(t_{1}), \Phi_{\tau(f).g}(t_{3})) \tag{by (\ref{S1})}\\
&= \Phi_{(e, \tau (f).g)}(t_{1},t_{3}).
\end{align*}
Observe that 
$$\Phi_{(e.\tau (f),g)}(t_{1},1,t_{3}) \in (e.\tau(f), g) \text{ and } \Phi_{(e, \tau (f).g)}(t_{1},t_{3}) \in (e, \tau(f).g),$$
and since 
$$(e.\tau(f), g) \parallel (e, \tau(f).g)$$
it follows from (\ref{cr-2}) that
\begin{align*}
(p\circ \Phi_{([e,f],g)} )(t_{1}, 1,t_{3})&= (p \circ \Phi_{(e,[f,g])} )(t_{1},1,t_{3}).\\
&= (p \circ \Phi_{(e,[f,g])} \circ h)(t_{1},1,t_{3}).
\end{align*}
Now we see the case when $t_{1}=0$ or $t_{1}=1$. In the first case we have that 
\begin{align*}
\Phi_{([e,f],g)}(0, t_{2},t_{3})&= (\Phi_{[e,f]}(0, t_{2}), \Phi_{g}(t_{3}))\\
&= (\Phi_{\iota(e).f}(t_{2}), \Phi_{g}(t_{3})) \tag{by (\ref{S2})}\\
&= \Phi_{(\iota(e).f, g)}(t_{2},t_{3}),
\end{align*}
and
\begin{align*}
\Phi_{(e,[f,g])}(0,t_{2},t_{3})&= (\Phi_{e}(0), \Phi_{[f,g]}(t_{2},t_{3}))\\
&= (\iota(e), \Phi_{[f,g]}(t_{2},t_{3}))\\
&=\Phi_{(\iota e, [f,g])}(t_{2},t_{3}).
\end{align*}
Observe that 
$$\Phi_{(\iota(e).f, g)}(t_{2},t_{3}) \in (\iota(e).f, g) \text{ and } \Phi_{(\iota e, [f,g])}(t_{2},t_{3}) \in (\iota(e), [f,g]),$$
and since 
$$(\iota(e).f, g) \parallel (\iota(e), [f,g])$$
it follows from (\ref{cr-2}) that
\begin{align*}
(p\circ \Phi_{([e,f],g)} )(0,t_{2}, t_{3})&= (p \circ \Phi_{(e,[f,g])})(0,t_{2},t_{3})\\
&= (p \circ \Phi_{(e,[f,g])} \circ h)(0,t_{2},t_{3}).
\end{align*}
In the second case when $t_{1}=1$ we have that 
\begin{align*}
\Phi_{([e,f],g)}(1, t_{2},t_{3})&= (\Phi_{[e,f]}(1, t_{2}), \Phi_{g}(t_{3}))\\
&= (\Phi_{\tau(e).f}(t_{2}), \Phi_{g}(t_{3}))\tag{by (\ref{S1})}\\
&=\Phi_{(\tau(e).f, g)}(t_{2},t_{3}),
\end{align*}
and
\begin{align*}
\Phi_{(e,[f,g])}(1,t_{2},t_{3})&= (\Phi_{e}(1), \Phi_{[f,g]}(t_{2},t_{3}))\\
&= (\tau(e), \Phi_{[f,g]}(t_{2},t_{3}))\\
&=\Phi_{(\tau(e), [f,g])}(t_{2},t_{3}).
\end{align*}
Observe that 
$$\Phi_{(\tau(e).f, g)}(t_{2},t_{3}) \in (\tau(e).f, g) \text{ and } \Phi_{(\tau(e), [f,g])}(t_{2},t_{3}) \in (\tau(e), [f,g]),$$
and since 
$$(\tau(e).f, g) \parallel (\tau(e), [f,g])$$
it follows from (\ref{cr-2}) that
\begin{align*}
(p\circ \Phi_{([e,f],g)} )(1,t_{2}, t_{3})&= (p \circ \Phi_{(e,[f,g])})(1,t_{2},t_{3})\\
&= (p \circ \Phi_{(e,[f,g])} \circ h)(1,t_{2},t_{3}).
\end{align*}
\item[(3-2)] The second case for the 3-cells is when $(a_{1},a_{2})=(\sigma.u, f)$ and $(b_{1},b_{2})=(\sigma, u.f)$ where $\sigma$ is a 2-cell from $F.\mathbf{p}.F$, $f$ is a 1-cell and $u \in F$. In the first case we assume that $\sigma$ arises from a (iii) type cell and that it is attached along the loop
$$e^{\varepsilon_{1}}_{1} \circ \dots \circ e^{\varepsilon_{s}}_{s} \circ e^{\varepsilon_{s+1}}_{s+1} \circ \dots \circ e_{n}^{\varepsilon_{n}}.$$
When $t_{1}=0$ we see that
\begin{align*}
\Phi_{(\sigma.u,f)}(0,t_{2},t_{3})&= (\Phi_{\sigma.u}(0,t_{2}), \Phi_{f}(t_{3}))\\
&=(\Phi^{\varepsilon_{n}}_{e_{n}.u}(1-t_{2}), \Phi_{f}(t_{3})) \tag{by (\ref{C3})}\\
&= (\Phi_{e_{n}.u}((1-\varepsilon_{n})/2+\varepsilon_{n}\cdot (1-t_{2})), \Phi_{f}(t_{3}))\\
&= \Phi_{(e_{n}.u, f)} ((1-\varepsilon_{n})/2+\varepsilon_{n}\cdot (1-t_{2}),t_{3})
\end{align*}
and that
$$\Phi_{(e_{n}.u, f)} ((1-\varepsilon_{n})/2+\varepsilon_{n}\cdot (1-t_{2}),t_{3}) \in (e_{n}.u, f).$$
Also we see that
\begin{align*}
\Phi_{(\sigma, u.f)}(0,t_{2},t_{3})&= (\Phi_{\sigma}(0,t_{2}), \Phi_{u.f}(t_{3}))\\
&=(\Phi^{\varepsilon_{n}}_{e_{n}}(1-t_{2}), \Phi_{u.f}(t_{3})) \tag{by (\ref{C3})}\\
&= (\Phi_{e_{n}}((1-\varepsilon_{n})/2+\varepsilon_{n}\cdot (1-t_{2})), \Phi_{u.f}(t_{3}))\\
&=\Phi_{(e_{n}, u.f)} ((1-\varepsilon_{n})/2+\varepsilon_{n}\cdot (1-t_{2}),t_{3}),
\end{align*}
and that
$$\Phi_{(e_{n}, u.f)} ((1-\varepsilon_{n})/2+\varepsilon_{n}\cdot (1-t_{2}),t_{3}) \in (e_{n}, u.f).$$
But 
$$(e_{n}.u, f) \parallel (e_{n}, u.f)$$
and then (\ref{cr-2}) implies that
\begin{align*}
(p \circ \Phi_{(\sigma.u,f)})(0,t_{2},t_{3})&= (p \circ \Phi_{(\sigma, u.f)})(0,t_{2},t_{3})\\
&= (p \circ \Phi_{(\sigma, u.f)} \circ h)(0,t_{2},t_{3}).
\end{align*}
Now we discuss the case when $t_{1}=1$. Assuming that there is some $i=2,...,s$ such that $t_{1} \in [\dfrac{i-2}{s-1}, \dfrac{i-1}{s-1}]$, it follows that 
\begin{align*}
\Phi_{\sigma.u}(1,t_{2})&= \Phi^{\varepsilon_{i}}_{e_{i}.u}((s-1)t_{2}-(i-2)) \tag{by (\ref{C2})}\\
&= \Phi_{e_{i}.u}((1-\varepsilon_{i})/2+\varepsilon_{i}((s-1)t_{2}-(i-2))).
\end{align*}
Using this we derive that
\begin{align*}
\Phi_{(\sigma.u,f)}(1,t_{2},t_{3})&= (\Phi_{\sigma.u}(1,t_{2}), \Phi_{f}(t_{3}))\\
&=( \Phi_{e_{i}.u}((1-\varepsilon_{i})/2+\varepsilon_{i}((s-1)t_{2}-(i-2))), \Phi_{f}(t_{3}))\\
&=\Phi_{(e_{i}.u,f)}((1-\varepsilon_{i})/2+\varepsilon_{i}((s-1)t_{2}-(i-2)), t_{3}).
\end{align*}
Observe that
$$\Phi_{(e_{i}.u,f)}((1-\varepsilon_{i})/2+\varepsilon_{i}((s-1)t_{2}-(i-2)), t_{3}) \in (e_{i}.u, f).$$
In the same way as above we see that
\begin{align*}
\Phi_{(\sigma, u.f)}(1,t_{2},t_{3})&= (\Phi_{\sigma}(1,t_{2}), \Phi_{u.f}(t_{3}))\\
&= (\Phi_{e_{i}}((1-\varepsilon_{i})/2+\varepsilon_{i} ((s-1)t_{2}+(i-2))), \Phi_{u.f}(t_{3}))\\
&= \Phi_{(e_{i}, u.f)}((1-\varepsilon_{i})/2+\varepsilon_{i} ((s-1)t_{2}+(i-2)), t_{3}),
\end{align*}
and that
$$ \Phi_{(e_{i}, u.f)}((1-\varepsilon_{i})/2+\varepsilon_{i} ((s-1)t_{2}+(i-2)), t_{3}) \in (g_{i}, u.f).$$
But 
$$(g_{i}.u, f) \parallel (g_{i}, u.f)$$
and then (\ref{cr-2}) implies that
\begin{align*}
(p \circ \Phi_{(\sigma.u,f)})(1,t_{2},t_{3})&= (p \circ \Phi_{(\sigma, u.f)})(1,t_{2},t_{3})\\
&= (p \circ \Phi_{(\sigma, u.f)} \circ h)(1,t_{2},t_{3}).
\end{align*}
The cases when $t_{2}=0$ or $t_{2}=1$ are proved similarly to the above two cases and are omitted here. It remains to make the proofs for the cases when $t_{3}=0$ or $t_{3}=1$. When $t_{3}=0$ we see that
\begin{align*}
\Phi_{(\sigma.u,f)}(t_{1},t_{2},0)&=(\Phi_{\sigma.u}(t_{1},t_{2}), \Phi_{f}(0))\\
&= (\Phi_{\sigma.u}(t_{1},t_{2}), \iota(f))\\
&=\Phi_{(\sigma.u, \iota(f))}(t_{1},t_{2}),
\end{align*}
and that
$$\Phi_{(\sigma.u, \iota(f))}(t_{1},t_{2}) \in (\sigma.u, \iota(f)).$$
Also we see that
\begin{align*}
\Phi_{(\sigma, u.f)}(t_{1},t_{2},0)&= (\Phi_{\sigma}(t_{1},t_{2}), \Phi_{u.f}(0))\\
&= (\Phi_{\sigma}(t_{1},t_{2}), u\iota(f))\\
&=\Phi_{(\sigma, u.f)}(t_{2},t_{3}),
\end{align*}
and that
$$\Phi_{(\sigma, u.f)}(t_{2},t_{3}) \in (\sigma, u\iota(f)).$$
But
$$(\sigma.u, \iota(f)) \parallel (\sigma, u\iota(f)),$$
and then (\ref{cr-2}) implies that
\begin{align*}
(p \circ \Phi_{(\sigma.u,f)})(t_{1},t_{2},0)&= (p \circ \Phi_{(\sigma, u.f)})(t_{1},t_{2},0)\\
&= (p \circ \Phi_{(\sigma, u.f)} \circ h)(t_{1},t_{2},0).
\end{align*}
The case when $t_{3}=1$ is dual to the above and is omitted. 

Now we check the case when $\sigma$ arises from a type (ii) cell. Assume that it is attached along the loop
$$e^{\varepsilon_{1}}_{1} \circ e^{\varepsilon_{2}}_{2} \circ e_{3}^{\varepsilon_{3}}.$$
The only case when the proof differs from the respective proofs for other types of 2-cells is when $t_{2}=1$ while $t_{1}, t_{3} \in I$ are arbitrary. In this case we have that
\begin{align*}
(p \circ \Phi_{(\sigma.u,f)})(t_{1},1,t_{3})&= p(\Phi_{(\sigma.u,f)}(t_{1},1,t_{3}))\\
&=p(\Phi_{\sigma.u}(t_{1},1), \Phi_{f}(t_{3}))\\
&=p(\Phi^{\varepsilon_{2}}_{e_{2}.u}(1), \Phi_{f}(t_{3})) \tag{by (\ref{B4})}\\
&= p(\Phi_{e_{2}.u}((1-\varepsilon_{2})/2+\varepsilon_{2}), \Phi_{f}(t_{3}))\\
&= (p \circ \Phi_{(e_{2}.u, f)})((1-\varepsilon_{2})/2+\varepsilon_{2}, t_{3})\\
&= (p \circ \Phi_{(e_{2}, u.f)})((1-\varepsilon_{2})/2+\varepsilon_{2}, t_{3})\tag{by (\ref{cr-2})}\\
&= p(\Phi_{e_{2}}((1-\varepsilon_{2})/2+\varepsilon_{2}), \Phi_{u.f}(t_{3})) \\
&=p(\Phi^{\varepsilon_{2}}_{e_{2}}(1), \Phi_{u.f}(t_{3}))\\
&=p(\Phi_{\sigma}(t_{1},1), \Phi_{u.f}(t_{3})) \tag{by (\ref{B4})}\\
&= (p \circ \Phi_{(\sigma,u.f)} \circ h)(t_{1},1,t_{3}).
\end{align*}
Now we check the case when $\sigma$ arises from a type (i) 2-cell therefore is attached along $e^{\varepsilon_{1}}_{1}\circ e_{2}^{\varepsilon_{2}}$. We discuss here the case when $t_{1}=1$ and $t_{2}, t_{3} \in I$ are arbitrary. The proof for the case when $t_{2}=1$ and $t_{1}, t_{3} \in I$ are arbitrary is similar and is omitted. The other possible cases have proofs identical to their corresponding ones for other type of 2-cells. In this case we have.
\begin{align*}
(p \circ \Phi_{(\sigma.u,f)})(1,t_{2}, t_{3})&= p(\Phi_{(\sigma.u,f)}(1,t_{2},t_{3}))\\
&=p(\Phi_{\sigma.u}(1,t_{2}), \Phi_{f}(t_{3}))\\
&=p(\Phi^{\varepsilon_{1}}_{e_{1}.u}(1), \Phi_{f}(t_{3})) \tag{by (\ref{A2})}\\
&=p(\Phi_{e_{1}.u}((1-\varepsilon_{1})/2+\varepsilon_{1}), \Phi_{f}(t_{3}))\\
&= (p \circ \Phi_{(e_{1}.u, f)})((1-\varepsilon_{1})/2+\varepsilon_{1}, t_{3})\\
&= (p \circ \Phi_{(e_{1}, u.f)})((1-\varepsilon_{1})/2+\varepsilon_{1}, t_{3}) \tag{by (\ref{cr-2})}\\
&=p(\Phi_{e_{1}}((1-\varepsilon_{1})/2+\varepsilon_{1}), \Phi_{u.f}(t_{3}))\\
&= p(\Phi^{\varepsilon_{1}}_{e_{1}}(1), \Phi_{u.f}(t_{3}))\\
&=p(\Phi_{\sigma}(t_{1},1), \Phi_{u.f}(t_{3})) \tag{by (\ref{A2})}\\
&= (p \circ \Phi_{(\sigma,u.f)} \circ h)(t_{1},1,t_{3}).
\end{align*}

\item[(3-3)] The third case for the 3-cells is when $(a_{1},a_{2})=(f, u.\sigma)$ and $(b_{1},b_{2})=(f.u, \sigma)$ where $\sigma$ is a 2-cell from $F.\mathbf{p}.F$, $f$ is a 1-cell and $u \in F$. The proof in this case is dual to that of (3-2) and is omitted. 
\end{itemize}
\end{itemize}
(b) That $\mathcal{K}_{3}/\sim$ is a cell complex it follows from Proposition \ref{cc}. Since all $\sim$-minimal cells are of the same dimension, then it follows that the cells of $\mathcal{K}_{3}/\sim$ are $p(a_{1},a_{2})$ with $(a_{1},a_{2})$ varying in the cell decomposition $E(\mathcal{K}_{3})$ of $\mathcal{K}_{3}$ and that their characteristic maps are $p \circ \Phi_{(a_{1},a_{2})}$. 
\end{proof}

\begin{definition}
We denote the cell complex $\mathcal{K}_{3}/\sim$ by $\mathcal{D}(\cP,\mathbf{p})$.
\end{definition}

\begin{theorem} \label{embd}
The cell complex $\mathcal{D}(\cP,\mathbf{p})$ is a CW-complex whose 2-skeleton is $\mathcal{D}(\cP) \sqcup F. \mathbf{p}. F$.
\end{theorem}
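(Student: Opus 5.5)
The plan is to apply Proposition \ref{c2cw}. By Theorem \ref{numcw}, $\Delta\times\Delta$ is a CW-complex, so its $3$-skeleton $\mathcal{K}_{3}$ is a CW-complex. By Proposition \ref{cer}, $\sim$ is a cellular equivalence relation on $\mathcal{K}_{3}$. It therefore suffices to show that the quotient space $\mathcal{K}_{3}/\sim$ is Hausdorff. Proposition \ref{c2cw} then gives that $\mathcal{D}(\cP,\mathbf{p})$ is a CW-complex with cells $p(a_{1},a_{2})$ and characteristic maps $p\circ\Phi_{(a_{1},a_{2})}$, and that $p$ is a regular cellular map. Proposition \ref{idsc} then says that $p$ maps skeleta onto subcomplexes and that the induced maps are identifications. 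We will use this for the skeleton comparison.

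\emph{Identifying the 2-skeleton.} Consider the map $\theta:\Delta\to\mathcal{K}_{3}$, $x\mapsto(\lambda,x)$, which embeds $\Delta$ as the subcomplex $\{\lambda\}\times\Delta$. We claim that $p\circ\theta$ is a homeomorphism of $\Delta$ onto the $2$-skeleton of $\mathcal{D}(\cP,\mathbf{p})$.

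Surjectivity on cells comes from Lemma \ref{p-par-b} together with Lemma \ref{p-par-b-1}. Every cell $(a_{1},a_{2})$ of dimension $\le 2$ is parallel to $(\lambda,a_{1}\otimes a_{2})$: use (d-1), (d-2), (d-3), and (d-5) with $uv=\lambda\cdot uv$. Hence $p(a_{1},a_{2})=p(\lambda,a_{1}\otimes a_{2})$, and by (\ref{crutial}) the characteristic maps agree after composing with $p$.

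Injectivity: if $(\lambda,x)\sim(\lambda,y)$, the points lie in open cells $(\lambda,c)$ and $(\lambda,d)$ that are parallel. By Remark \ref{d2}, $c=\lambda\otimes c=\lambda\otimes d=d$, so Lemma \ref{p-inj} gives $x=y$.

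Continuity of the inverse: $p\circ\theta$ is a continuous bijection between CW-complexes that maps each closed cell of $\Delta$ homeomorphically onto a closed cell. Indeed, $\Phi_{p(\lambda,c)}=p\circ\theta\circ\Phi_{c}$. Since both spaces carry the weak topology with respect to closed cells, the inverse is continuous. The $2$-skeleton of the quotient consists exactly of the images of cells of dimension $\le2$, so it equals $p\theta(\Delta)\cong\Delta=\mathcal{D}(\cP)\sqcup F.\mathbf{p}.F$.

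\emph{Main obstacle: the Hausdorff property.} I would build $\mathcal{K}_{3}/\sim$ as an attaching space and invoke Proposition \ref{hd}. By the previous paragraph, its $2$-skeleton $Y_{2}$ is homeomorphic to $\Delta$, which is Hausdorff because it is a CW-complex (or by applying Proposition \ref{hd} twice, starting from the discrete space $F$). The $3$-cells are $p(f,\sigma)$ and $p(\sigma,f)$. Their characteristic maps $p\circ\Phi$ restrict on $\partial I^{3}$ to maps into $Y_{2}$, and the quotient topology on $\mathcal{K}_{3}/\sim$ should coincide with the adjunction topology $Y_{2}\sqcup_{F}\mathcal{B}$.

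To justify this, I would check that a set is closed in $\mathcal{K}_{3}/\sim$ if and only if its preimages under all $p\circ\Phi_{(a_{1},a_{2})}$ are closed. This follows because $\mathcal{K}_{3}$ has the weak topology and the saturation of a closed cell is a union of closed cells by (\ref{preim}). Since the open $3$-cells have injective images (part (ii) of Proposition \ref{cer}), attaching one copy of $D^{3}$ for each $\sim$-class of $3$-cells reproduces the quotient, and Proposition \ref{hd} gives Hausdorffness.

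The delicate point is exactly the agreement of the two topologies, which relies on the closure-finiteness of $\mathcal{K}_{3}$. I expect it to need the most care.
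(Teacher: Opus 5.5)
Your architecture matches the paper's. You embed $\Delta$ as a slice of $\mathcal{K}_{3}$ (you use $\{\lambda\}\times\Delta$, the paper uses $\Delta\times\{\lambda\}$). You get injectivity of $p$ on the slice because $(\lambda,c)\parallel(\lambda,d)$ forces $c=d$. You show the image is Hausdorff, attach the $3$-cells and apply Proposition \ref{hd}, then Proposition \ref{c2cw}. Framing the middle step as ``$p\theta$ is a homeomorphism onto the $2$-skeleton'' is the right formulation. Your justification that the quotient topology agrees with the adjunction topology (weak topology of $\mathcal{K}_{3}$, (\ref{crutial}), injectivity on open $3$-cells) is also more careful than the paper's one-line ``we observe''.

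There is, however, a circular step: \emph{continuity of the inverse}, on which your Hausdorff argument then rests. You justify it in three ways, and none is available yet:
\begin{itemize}
\item You call $p\theta$ a continuous bijection \emph{between CW-complexes}, but that $\mathcal{K}_{3}/\sim$ is a CW-complex is the conclusion of the theorem.
\item You say it maps closed cells homeomorphically onto closed cells. The usual reason a continuous injection of a compact cell is a homeomorphism onto its image is that the target is Hausdorff, which is what you are proving.
\item You say both spaces carry the weak topology. The subspace $Y_{2}$ of a quotient space is not automatically known to carry the weak topology for the maps $p\circ\Phi_{(a_{1},a_{2})}$ with $\dim(a_{1},a_{2})\le 2$; a set closed in that sense must also pull back to closed sets under the $3$-cell characteristic maps.
\end{itemize}
Since you then use $Y_{2}\cong\Delta$ to get a Hausdorff base for the attaching, the argument as written assumes what it proves.

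The missing content is exactly the paper's second stage. For $B\subseteq\Delta$ closed, check that $p^{-1}(p\theta(B))$ is closed in $\mathcal{K}_{3}$ cell by cell:
\begin{itemize}
\item For $\dim(a_{1},a_{2})\le 2$, apply (\ref{crutial}) to $(a_{1},a_{2})\parallel(\lambda,a_{1}\otimes a_{2})$ and use injectivity of $p\theta$. This gives $\Phi^{-1}_{(a_{1},a_{2})}(p^{-1}p\theta(B))=\Phi^{-1}_{a_{1}\otimes a_{2}}(B)$, which is closed. Hence $p^{-1}p\theta(B)\cap\mathcal{K}_{3}^{(2)}$ is closed in $\mathcal{K}_{3}^{(2)}$.
\item For a $3$-cell, interior points are never $\sim$-equivalent to points of lower-dimensional cells. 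So the preimage lies in $\partial I^{3}$ and equals the preimage of that closed set under the (continuous) attaching map.
\end{itemize}
This is the content of (\ref{brz2})--(\ref{brz1}), there stated for $\Psi=p\circ\iota_{1}$, and is best read as saying that $\Psi$ is a closed map. A closed continuous injection is an embedding, so $Y_{2}\cong\Delta$ is Hausdorff without any prior assumption on the quotient. With this inserted, the rest of your proof goes through.
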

\begin{proof}
To prove that $\mathcal{D}(\cP,\mathbf{p})$ is a CW-complex it is enough from Proposition \ref{c2cw} to see that $\mathcal{D}(\cP,\mathbf{p})$ is Hausdorff. This will be done in several stages. First, consider the composition of maps
$$\xymatrix{ \mathcal{D}(\cP) \sqcup F. \mathbf{p}.  F \ar[r]^(0.70){\iota_{1}} &  \mathcal{K}_{3} \ar[r]^(0.35){p} & \mathcal{D}(\cP,\mathbf{p})}
$$
where $\iota_{1}(x)=(x,\lambda)$ and $p$ is the quotient map. We denote this composition by $\Psi$. The map $\Psi$ is continuous since $\iota_{1}$ is the homeomorphism that identifies $\mathcal{D}(\cP) \sqcup F. \mathbf{p}.  F$ with the sub complex $(\mathcal{D}(\cP) \sqcup F. \mathbf{p}.  F) \times \{\lambda\}$ of $\mathcal{K}_{3}$, and $p$ is the quotient map which is continuous by definition. Now we prove that $\Psi$ is an injective map. Let $x,y \in \mathcal{D}(\cP) \sqcup F. \mathbf{p}.  F$ such that $p(x,\lambda)=p(y, \lambda)$, hence $(x,\lambda) \sim (y, \lambda)$. Let $a\setminus \partial a$ and $b\setminus \partial b$ be the unique open cells such that $x \in a\setminus \partial a$ and $y \in b\setminus \partial b$. Also we note that the only open cell which contain $\lambda$ is $\lambda$ itself so the unique open cells in $\mathcal{K}_{3}$ which contain $(x,\lambda)$ and $(y, \lambda)$ are respectively $(a,\lambda) \setminus \partial (a, \lambda)$ and $(b,\lambda) \setminus \partial (b, \lambda)$. But from Definition \ref{relK} we must have that $(a,\lambda) \parallel (b, \lambda)$. Definition \ref{prl} implies that either $(a,\lambda)=(\sigma_{1} \otimes \sigma_{2}, \sigma_{3})$ and $(b, \lambda)=(\sigma_{1}, \sigma_{2} \otimes \sigma_{3})$, or $(a,\lambda)=(\sigma_{1}, \sigma_{2} \otimes \sigma_{3})$ and $(b, \lambda)=(\sigma_{1} \otimes \sigma_{2}, \sigma_{3})$. In the first case it follows that $\sigma_{2}=\sigma_{3}=\lambda$ and $a=\sigma_{1}=b$. Similarly in the second case we see that again $a=b$. Now from Definition \ref{relK} we have that 
\begin{align*}
(y, \lambda)&= \Phi_{(b,\lambda)}(\Phi^{-1}_{(a, \lambda)}(x,\lambda))\\
&= \Phi_{(a,\lambda)}(\Phi^{-1}_{(a, \lambda)}(x,\lambda)) \tag{since $a=b$}\\
&= (x,\lambda),
\end{align*}
therefore $(x,\lambda)=(y,\lambda)$. The fact that $\Psi$ is injective means in particular that the restriction of $p$ on $(\mathcal{D}(\cP) \sqcup F. \mathbf{p}.  F) \times \{\lambda\}$ is injective.

In the second stage we show that the subspace $\Psi(\mathcal{D}(\cP) \sqcup F. \mathbf{p}. F)$ of $\mathcal{D}(\cP,\mathbf{p})$ is Hausdorff. Let $p(x,\lambda) \neq p(y, \lambda)$ be two distinct points in $\Psi(\mathcal{D}(\cP) \sqcup F. \mathbf{p}. F)$, hence $(x,\lambda) \neq (y,\lambda)$. But $(\mathcal{D}(\cP) \sqcup F. \mathbf{p}.  F) \times \{\lambda\}$ is Hausdorff, therefore we can find closed subsets $U,V$ of $(\mathcal{D}(\cP) \sqcup F. \mathbf{p}.  F) \times \{\lambda\}$ such that $(x,\lambda) \in U$, $(y, \lambda)\in V$ and $U \cap V=\emptyset$. Consider now the subsets $p(U)$ and $p(V)$ of $\Psi(\mathcal{D}(\cP) \sqcup F. \mathbf{p}. F)$. It is obvious that $p(x,\lambda) \in p(U)$ and $p(y,\lambda) \in p(V)$. These subsets are disjoint for if $p(z, \lambda) \in p(U)\cap p(V)$, then there are $(z_{u}, \lambda) \in U$ and $(z_{v}, \lambda) \in V$ such that
$$p(z_{u}, \lambda)=p(z,\lambda)=p(z_{v}, \lambda).$$
From the first stage of the proof we see that $(z_{u}, \lambda)=(z_{v}, \lambda)$ contradicting the assumption that $U \cap V=\emptyset$. Next we prove that both $p(U)$ and $p(V)$ are closed in $\Psi(\mathcal{D}(\cP) \sqcup F. \mathbf{p}. F)$. To prove for instance that $p(U)$ is closed it suffices to prove that $p^{-1}(p(U))$ is closed in the CW- complex $\mathcal{K}_{3}$ which amounts to saying that for every cell $(a_{1},a_{2})$ in $\mathcal{K}_{3}$, the intersection 
$$\Phi^{-1}_{(a_{1},a_{2})}(p^{-1}(p(U))) \cap D^{n}$$
is  closed in $D^{n}$ where $n=\text{dim}(a_{1},a_{2})$. In the case when $n=0$, the claim is trivial since $D^{0}=\{\ast\}$ and in this case the intersection is either $\emptyset$ or $\{\ast\}$, therefore it is closed in $D^{0}$. Now assume that $0<n\leq 3$. We first prove that if $0<\text{dim}(a_{1},a_{2})=n \leq 2$, then there is some cell $(b_{1}, \lambda) \in \iota_{1}(\mathcal{D}(\cP) \sqcup F. \mathbf{p}.  F)$ such that
\begin{equation} \label{brz2}
\Phi^{-1}_{(a_{1},a_{2})}(p^{-1}(p(U))) \cap D^{n}=\Phi^{-1}_{(b_{1}, \lambda)}(U) \cap D^{n}.
\end{equation}
Before we prove (\ref{brz2}) we recall from Lemma \ref{p-par-b-1} that for every cell $(a_{1},a_{2})$ of dimension $\leq 2$ there is some cell $(b_{1}, \lambda)$ of the same dimension and parallel to the first. Let now $z \in \Phi^{-1}_{(a_{1},a_{2})}(p^{-1}(p(U))) \cap D^{n}$, then we have
\begin{align*}
z \in \Phi^{-1}_{(a_{1},a_{2})}(p^{-1}(p(U))) \cap D^{n} & \Leftrightarrow \Phi_{(a_{1},a_{2})}(z) \in p^{-1}(p(U)) \text{ and } z \in D^{n}\\
& \Leftrightarrow p( \Phi_{(a_{1},a_{2})}(z) )=p(u) \text{ where } u \in U \text{ and } z \in D^{n}.
\end{align*}
For the cell $(b_{1}, \lambda)$ and for the same $z \in D^{n}$, we have from (\ref{crutial}) that $p(\Phi_{(b_{1}, \lambda)}(z)) = p(\Phi_{(a_{1},a_{2})}(z))$. It follows now that $p(\Phi_{(b_{1}, \lambda)}(z))=p(u)$ and then the injectivity of $p$ on $\iota_{1}(\mathcal{D}(\cP) \sqcup F. \mathbf{p}.  F)$ implies that $\Phi_{(b_{1}, \lambda)}(z)=u$. This in turn implies that $z \in \Phi^{-1}_{(b_{1}, \lambda)}(U) \cap D^{n}$, proving that
$$\Phi^{-1}_{(a_{1},a_{2})}(p^{-1}(p(U))) \cap D^{n} \subseteq \Phi^{-1}_{(b_{1}, \lambda)}(U) \cap D^{n}.$$ 
To prove the reverse inclusion we observe that there are two cases possible. In the first case $\Phi^{-1}_{(b_{1}, \lambda)}(U) \cap D^{n}=\emptyset$. In this case the above inclusion immediately implies the equality (\ref{brz2}). In the second case, there is some $z \in \Phi^{-1}_{(b_{1}, \lambda)}(U) \cap D^{n}$, hence $\Phi_{(b_{1}, \lambda)}(z) \in U$. From (\ref{crutial}) again we have that $p(\Phi_{(a_{1},a_{2})}(z))=p(\Phi_{(b_{1}, \lambda)}(z))$, consequently $p(\Phi_{(a_{1},a_{2})}(z)) \in p(U)$, therefore $\Phi_{(a_{1},a_{2})}(z) \in p^{-1}(p(U))$ which implies that $z \in \Phi^{-1}_{(a_{1},a_{2})}(p^{-1}(p(U))) \cap D^{n}$ proving the reverse inclusion
$$\Phi^{-1}_{(b_{1}, \lambda)}(U) \cap D^{n} \subseteq  \Phi^{-1}_{(a_{1},a_{2})}(p^{-1}(p(U))) \cap D^{n}.$$
Since $U$ is closed, it follows that the right hand side of (\ref{brz2}) is a closed subset of $D^{n}$, hence so is the left hand side. Since for all cells $(a_{1},a_{2})$ of dimension $0\leq n\leq 2$ we have
$$\Phi^{-1}_{(a_{1},a_{2})}(p^{-1}(p(U))=\Phi^{-1}_{(a_{1},a_{2})}(p^{-1}(p(U)) \cap \mathcal{K}_{3}^{(2)}),$$
and from (\ref{brz2}) $\Phi^{-1}_{(a_{1},a_{2})}(p^{-1}(p(U))$ is closed, we derive that,
\begin{equation} \label{ind-2}
p^{-1}(p(U)) \cap \mathcal{K}_{3}^{(2)} \text{ is closed in } \mathcal{K}_{3}^{(2)}.
\end{equation}
Now we take care of the case when $\text{dim}(a_{1},a_{2})=3$. We prove first that
\begin{equation} \label{brz1}
\Phi^{-1}_{(a_{1},a_{2})}(p^{-1}(p(U))) \cap \text{Int}(D^{3})=\emptyset.
\end{equation}
Indeed, 
\begin{align*}
z \in \Phi^{-1}_{(a_{1},a_{2})}(p^{-1}(p(U))) \cap \text{Int}(D^{3}) & \Leftrightarrow \Phi_{(a_{1},a_{2})}(z) \in p^{-1}(p(U)) \text{ and } z \in \text{Int}(D^{3})\\
& \Leftrightarrow p(\Phi_{(a_{1},a_{2})}(z))= p(u) \text{ where }u \in U  \text{ and } z \in \text{Int}(D^{3})\\
& \Leftrightarrow \Phi_{(a_{1},a_{2})}(z) \sim u \text{ where }u \in U  \text{ and } z \in \text{Int}(D^{3}),
\end{align*}
but the last statement is false since $\Phi_{(a_{1},a_{2})}(z) \in (a_{1},a_{2}) \setminus \partial (a_{1},a_{2})$ while $u$ belongs to a cell of dimension $\leq 2$ and we know that no points of cells of dimension 3 can be identified with points of cells of lower dimensions. Further we have that
\begin{align*} 
\Phi^{-1}_{(a_{1},a_{2})}(p^{-1}(p(U))&=\Phi^{-1}_{(a_{1},a_{2})}(p^{-1}(p(U))) \cap D^{3} \\
&=(\Phi^{-1}_{(a_{1},a_{2})}(p^{-1}(p(U))) \cap S^{2}) \cup (\Phi^{-1}_{(a_{1},a_{2})}(p^{-1}(p(U))) \cap \text{Int}(D^{3}))\\
&= \Phi^{-1}_{(a_{1},a_{2})}(p^{-1}(p(U))) \cap S^{2} \tag{from (\ref{brz1})}\\
&=  \phi^{-1}_{(a_{1},a_{2})}(p^{-1}(p(U)) \cap \mathcal{K}_{3}^{(2)}), 
\end{align*}
where $\phi_{(a_{1},a_{2})}$ is the attaching map of $(a_{1},a_{2})$. But from (\ref{ind-2}) we have that $p^{-1}(p(U)) \cap \mathcal{K}_{3}^{(2)}$ is closed in $\mathcal{K}_{3}^{(2)}$ and then the continuity of $\phi_{(a_{1},a_{2})}$ implies that $\phi^{-1}_{(a_{1},a_{2})}(p^{-1}(p(U)) \cap \mathcal{K}_{3}^{(2)})$ is closed in $S^{2}$ consequently it is closed in $D^{3}$, hence so is $\Phi^{-1}_{(a_{1},a_{2})}(p^{-1}(p(U))$ which finally proves that $p^{-1}(p(U))$ is closed in $\mathcal{K}_{3}$.

In the third stage we observe that $\mathcal{D}(\cP,\mathbf{p})$ is obtained from the Hausdorff space $\Psi(\mathcal{D}(\cP) \sqcup F. \mathbf{p}. F)$ by attaching to it 3-cells $p(f, \sigma)$ or $p(\sigma,f)$ where $\sigma$ is a 2-dimensional cell and $f$ is a 1-cell. Proposition \ref{hd} implies that $\mathcal{D}(\cP,\mathbf{p})$ is Hausdorff.

Finally, since $\iota_{1}$ is obviously a regular map and from Proposition \ref{c2cw} $p$ is also regular, then so is $\Psi$. Proposition \ref{idsc} allows us to regard $\mathcal{D}(\cP) \sqcup F. \mathbf{p}.  F$ as the 2-dimensional skeleton of $\mathcal{D}(\cP,\mathbf{p})$ via $\Psi$. 
\end{proof}

\begin{remark} \label{remsub}
The result of Theorem \ref{embd} allows us to use the same notations for cells of $\mathcal{D}(\cP,\mathbf{p})$ of dimension at most 2 as those of $\mathcal{D}(\cP) \sqcup F. \mathbf{p}.  F$. While for cells of dimension 3 which are of the form $p(f, \sigma)$ or $p(\sigma,f)$ where $\sigma$ is a 2-dimensional cell and $f$ is a positive edge, we will use notations $[f, \sigma]$ and $[\sigma, f]$ respectively.
\end{remark}

\section{The cellular boundary maps for $\mathcal{D}(\cP,\mathbf{p})$}

For each $i=0,1,2,3$ we let $C_{i}$ be the free abelian group on the set of $i$-cells of $\mathcal{D}(\cP,\mathbf{p})$. We will define the boundary maps of the cellular chain complex
\begin{equation}\label{cchc}
\xymatrix{ C_{3} \ar[r]^{\partial_{3}} & C_{2} \ar[r]^{\partial_{2}} & C_{1} \ar[r]^{\partial_{1}} & C_{0} \ar[r] & 0.}
\end{equation}
The standard definition of $\partial_{1}$ is made by setting for every 1-cell $e$,
$$\partial_{1}(e)=\tau(e)-\iota(e).$$
This differs from the definition of $\partial_{1}$ given in \cite{LDHM1} only by the sign. Since in several papers like \cite{LDHM1}, \cite{LDHM2}, \cite{SES}, the interest of studying (\ref{cchc}) is the first homology group of $\mathcal{D}(\cP)$ which is invariant of the change of sign of $\partial_{1}$, all the results obtained there for $H_{1}(\mathcal{D}(\cP))$ remain valid if the definition of $\partial_{1}$ is the standard one given as above.

The definition of $\partial_{2}$ on 2-cells $[e,f]$ is made in \cite{Sth} (see also \cite{SES}) by
\begin{align}
\partial_{2}[e,f]&=e.(\iota (f) - \tau (f))-(\iota (e)- \tau (e)).f \notag\\
&= (e. \iota(f)+ \tau(e).f)- (\iota(e).f+ e. \tau(f)). && \label{bsq}
\end{align}
We will see below that this definition is not arbitrary but follows in the standard fashion from the way the attaching map of $[e,f]$ is defined. Assume that we have oriented the boundary of the attaching disk in (\ref{amsq}) in a counterclockwise way. To compute the incidence numbers of the edges on the boundary of $[e,f]$, we collapse the entire complex to a point apart from the open edge and see if the orientation of the edge agrees with that of the boundary of the disk. If it agrees, then the incidence number is 1 times the number of times $\partial [e,f]$ passes through that edge. If it doesn't, then it is -1 times that number. In our case the incidence numbers of boundary edges of $[e,f]$ are as follows. The edges $e. \iota(f)$ and $\tau(e).f$ have incidence numbers 1, and those $\iota(e).f$ and $e. \tau(f)$ have incidence numbers -1. That's the reason the cellular boundary formula for $[e,f]$ is the one stated above. In a similar fashion we can compute the cellular boundary formula for some 2-cell $\sigma$ from $F.\mathbf{p}.F$ which is attached along the loop $e^{\varepsilon_{1}}_{1} \circ \dots  \circ e_{n}^{\varepsilon_{n}}$ as depicted in (\ref{amp4}), and get
\begin{equation} \label{bp}
\partial_{2} (\sigma)= \varepsilon_{1}e_{1}+ \dots + \varepsilon_{n}e_{n},
\end{equation}
which is the same as the one declared in \cite{SES}. Now we compute the formula for $\partial_{3}$. Let
$$\xymatrix{C_{3}(\mathcal{K}_{3}) \ar[r]^{\delta_{3}} & C_{2}(\mathcal{K}_{3}) \ar[r]^{\delta_{2}} & C_{1}(\mathcal{K}_{3}) \ar[r]^{\delta_{1}} & C_{0}(\mathcal{K}_{3}) \ar[r] & 0}$$
be the chain complex for the 3-skeleton of $\Delta \times \Delta$. From Proposition 3B.1, p 269 of \cite{AH} we have in particular that for 3-cells $(f, \sigma)$ and $(\sigma, f)$ the following hold true.
\begin{equation} \label{x1}
\delta_{3}(f,\sigma)= \partial_{1}(f) \times \sigma + (-1)^{1} f \times \partial_{2} (\sigma)
\end{equation}
and
\begin{equation} \label{x2}
\delta_{3}(\sigma,f)= \partial_{2}(\sigma) \times f + (-1)^{2} \sigma \times \partial_{1} (f).
\end{equation}
The cellular map $p: \mathcal{K}_{3} \rightarrow \mathcal{D}(\cP,\mathbf{p})$ induces a mapping $p^{\#}$ of chain complexes
\begin{equation} \label{chm3}
\xymatrix{C_{3}(\mathcal{K}_{3}) \ar[d]_{p^{\#}_{3}} \ar[r]^{\delta_{3}} & C_{2}(\mathcal{K}_{3}) \ar[d]_{p^{\#}_{2}} \ar[r]^{\delta_{2}} & C_{1}(\mathcal{K}_{3}) \ar[d]_{p^{\#}_{1}} \ar[r]^{\delta_{1}} & C_{0}(\mathcal{K}_{3}) \ar[d]_{p^{\#}_{0}} \ar[r] & 0 \ar@{}[d]\\
C_{3} \ar[r]^{\partial_{3}} & C_{2} \ar[r]^{\partial_{2}} & C_{1} \ar[r]^{\partial_{1}} & C_{0} \ar[r] & 0}
\end{equation}
It is possible to determine the value of $\partial_{3}$ on free generators of $C_{3}$, which are 3-cells of the form $[f,\sigma]=p(f,\sigma)$ and $[\sigma, f]=p(\sigma, f)$, in terms of $p^{\#}_{3}$, $p^{\#}_{2}$ and $\delta_{3}$ alone. To see this we first note that for every cell $(a_{1},a_{2})$ of dimension $n=1,2,3$,
\begin{equation} \label{pth}
p_{n}^{\#}(a_{1},a_{2})=p(a_{1},a_{2}).
\end{equation}
Indeed, from Definition \ref{R} we have in general that
\begin{equation*}
p_{\#}(a_{1},a_{2})=\sum_{\mu \in M_{n}}[p:(a_{1},a_{2}):p(b^{\mu}_{1},b^{\mu}_{2})]p(b^{\mu}_{1},b^{\mu}_{2}),
\end{equation*}
where $M_{n}$ in the set of distinct $n$-cells of $\mathcal{D}(\cP, \mathbf{p})$, but from Corollary \ref{R2} the only $p(b^{\mu}_{1},b^{\mu}_{2})$ for which $[p:(a_{1},a_{2}):p(b^{\mu}_{1},b^{\mu}_{2})] \neq 0$ is $p(a_{1},a_{2})$, therefore the sum above has only one term which is $[p:(a_{1},a_{2}):p(a_{1},a_{2})]p(a_{1},a_{2})$. To compute the incidence number $[p:(a_{1},a_{2}):p(a_{1},a_{2})]$ we apply Proposition \ref{R1} by taking $X=\mathcal{K}_{3}$, $Y=\mathcal{D}(\cP, \mathbf{p})$, $f=p$, $\sigma=(a_{1},a_{2})$ and $\tau=p(a_{1},a_{2})$. Taking now into account that $\Phi_{p(a_{1},a_{2})}=p \circ \Phi_{(a_{1},a_{2})}$ we see that the mapping $F((a_{1},a_{2}): p(a_{1},a_{2}))$ of diagram (\ref{Rr1}) in our case is the composite map
$$\xymatrix{ D^{n}/ S^{n-1} \ar[rr]^(0.35){\bar{\Phi}_{p(a_{1},a_{2})}} && \mathcal{D}(\cP, \mathbf{p})^{(n)}/\mathcal{D}(\cP, \mathbf{p})^{(n-1)} \ar[rr]^(0.53){j_{p(a_{1},a_{2})}} && p(a_{1},a_{2})/\partial p(a_{1},a_{2}),}$$
which is orientation preserving, therefore $[p:(a_{1},a_{2}):p(a_{1},a_{2})]=1$ and then we have (\ref{pth}).

Let us assume that $\sigma$ is any 2-cell from $\mathcal{D}(\cP) \sqcup F. \mathbf{p}.F$ and that
$$\partial_{2} (\sigma)= \varepsilon_{1}e_{1}+ \dots + \varepsilon_{n}e_{n}.$$
We already know from (\ref{bsq}) and (\ref{bp}) that $\partial_{2} (\sigma)$ is always of the above form. Now for every 3-cell $p(f,\sigma)=[f,\sigma]$ we see that
\begin{align*}
\partial_{3}[f,\sigma]&= \partial_{3}(p(f,\sigma))\\
&=\partial_{3}(p^{\#}_{3}(f,\sigma)) \tag{by (\ref{pth})}\\
&=p^{\#}_{2}(\delta_{3}(f,\sigma))\\
&= p^{\#}_{2}\left( (\tau(f), \sigma)-(\iota(f),\sigma)-\sum_{i=1}^{n} \varepsilon_{i} (f, e_{i}) \right)\tag{by (\ref{x1})}\\
&= p(\tau(f), \sigma)- p(\iota(f),\sigma)-\sum_{i=1}^{n} \varepsilon_{i} p(f, e_{i}) \tag{by (\ref{pth})}\\
&= (\tau(f).\sigma - \iota(f).\sigma)-\sum_{i=1}^{n} \varepsilon_{i}[f, e_{i}]. \tag{by Remark \ref{remsub}}
\end{align*}
When the 3-cell is $p(\sigma,f)=[\sigma,f]$ we have
\begin{align*}
\partial_{3}[\sigma,f]&= \partial_{3}(p(\sigma,f))\\
&=\partial_{3}(p^{\#}_{3}(\sigma, f)) \tag{by (\ref{pth})}\\
&=p^{\#}_{2}(\delta_{3}(\sigma,f))\\
&= p^{\#}_{2}\left(\sum_{i=1}^{n} \varepsilon_{i}(e_{i},f)+( (\sigma, \tau(f))-(\sigma, \iota(f))\right) \tag{by (\ref{x2})}\\
&= \sum_{i=1}^{n} \varepsilon_{i} p(e_{i},f)+( p(\sigma, \tau(f))-p(\sigma, \iota(f)) \tag{by (\ref{pth})}\\
&= \sum_{i=1}^{n} \varepsilon_{i}[e_{i},f]+( \sigma. \tau(f)-\sigma.\iota(f)) \tag{by Remark \ref{remsub}}\\
&= ( \sigma. \tau(f)-\sigma.\iota(f))+ \sum_{i=1}^{n} \varepsilon_{i}[e_{i},f].
\end{align*}
We remark that the actual value of $\partial_{3}$ differs from the one given in \cite{SES} only by the sign, having thus no effect on the results related to $H_{2}(\mathcal{D}(\cP,\mathbf{p}))$ proved in \cite{SES}.

\end{document}